\documentclass{article}
\title{Multi-type logistic branching processes with selection: frequency process and genealogy for large carrying capacities}
\author{Marta Dai Pra\thanks{Institut f\"ur Mathematik, Humboldt-Universit\"at zu Berlin, Germany, {\tt daimarta@hu-berlin.de}} \ \orcidlink{0009-0001-6464-7722} \and Julian Kern\thanks{Institut f\"ur Mathematik, Freie Universit\"at Berlin, Germany, {\tt j.kern@fu-berlin.de}} \  \orcidlink{0000-0002-8231-0736}}
\date{}
\usepackage[margin=3cm]{geometry}
% SIMBOLS AND ALPHABETS
\usepackage{bbm}
\usepackage{amsmath}
\usepackage{amssymb}
\usepackage{amsfonts}
\usepackage{amsthm}
\usepackage{wasysym}
\usepackage{dsfont}
\usepackage{todonotes}
\usepackage{enumitem}
\usepackage{bm}
\usepackage{soul}
\usepackage{comment}

%FIGURES
\usepackage{graphicx}
\usepackage{wrapfig}
\usepackage{tikz}
%\usetikzlibrary{decorations.pathreplacing,calligraphy}
\usetikzlibrary{shapes.geometric}
\usepackage{orcidlink}
\usepackage{hyperref}
\usepackage{cleveref}
\usepackage[style=trad-alpha,backend=biber, backref=false, url=false, isbn=false, eprint=false]{biblatex}
\addbibresource{BibLBI.bib}

%COLORS 
\definecolor{bluegreen}{rgb}{0.64, 0.76, 0.68}

% OPERATORS
\newcommand{\abs}[1]{\lvert#1\rvert}

\renewcommand*{\P}{\mathbb{P}}
\newcommand*{\E}{\mathbb{E}}
\newcommand*{\R}{\mathbb{R}}

\newcommand*{\N}{\mathbb{N}}
\renewcommand*{\Xi}{\varXi}
\renewcommand*{\epsilon}{\varepsilon}
\renewcommand*{\Theta}{\varTheta}

\renewcommand*{\Delta}{\varDelta}

\newcommand*{\1}{\mathds{1}}
\newcommand{\dx}{\;\mathrm{d}}

\newcommand{\s}{\mathfrak{s}}

\usepackage{textcase}
\usepackage{xcolor}

% THEOREM STYLE
\theoremstyle{plain}
\theoremstyle{definition}
\newtheorem{definition}{Definition}[section]

\newtheorem{lemma}[definition]{Lemma}

\newtheorem{theorem_internal}[definition]{Theorem}
\theoremstyle{plain}
\newtheorem{theorem}{Theorem}%[Definition]
\theoremstyle{remark}
\newtheorem{remark}[definition]{Remark}

\newtheorem{assumption}{Assumption}

\begin{document}
\maketitle
\begin{abstract}
	We present a model for growth in a multi-species population. 
    We consider two types evolving as a logistic branching process with mutation, where one of the types has a selective advantage, and are interested in the regime in which the carrying capacity of the system goes to $\infty$. 
    We first study the genealogy of the population up until it almost reaches carrying capacity through a coupling with an independent branching process. 
    We then focus on the phase in which the population has reached carrying capacity. After recovering a Gillespie--Wright--Fisher SDE in the infinite carrying capacity limit, we construct the Ancestral Selection Graph and show the convergence of the lineage counting process to the moment dual of the limiting diffusion.
    \medbreak
    \noindent
    \emph{Key words and phrases:} logistic branching process, selection, Wright--Fisher diffusion, ancestral selection graph, scaling limits of interacting populations, moment duality\\
    \emph{MSC2020:} 60K35; 60J80; 60J70
\end{abstract}

\section{Introduction}
Among the many processes used in the study of population evolution, two of the most important ones are the Wright--Fisher diffusion and supercritical branching processes. 
In this work we consider a model that links them together to create a more complete description of a growing population.

Supercritical branching processes are characterised by exponential growth and are used to describe growing populations starting with few individuals. 
The exponential growth may be realistic as long as the population is small; when the population is large, however, it is biologically more realistic that its growth slows down due, for example, to competition for a limited resources. 
The Wright--Fisher diffusion, on the other hand, usually arises as scaling limit of neutral populations when their constant (in time) size grows to infinity. 
The gap between these two scenarios of exponential growth and large populations evolving as a diffusion can be bridged through density-dependent branching processes. 

These processes, where individuals reproduce and die with rates depending on the population size, have been widely studied in the past \cite{Lip75, Kleb84, Kle93}, and have regained attention in recent years \cite{Che22,FRW24,For25}. The model we are particularly interested in is the logistic branching process introduced in \cite{Camp03,Lambert05}, where individuals reproduce and die at constant rate but can also die because of competition with other individuals at a rate that grows linearly with the population size. This model shows an initial phase of exponential growth but, eventually, the population fluctuates around a value called \emph{carrying capacity}. In this paper we are interested in the limiting behaviour of such a process as the carrying capacity $K$ grows to infinity. 

In all the models cited so far, individuals reproduce in the same way, i.e.\ their random number of offspring is identically distributed. 
A natural generalisation is to let different individuals have different reproduction mechanisms. 
Here, we study a population whose individuals carry one of two possible types $\{+,-\}$ characterised by two different reproduction mechanisms. Individuals of type $+$ can be thought as selectively advantaged with respect to the type $-$.

In this work, we will investigate three convergence results. 
The first one focuses on the genealogy of the process in the initial growth phase until it almost reaches carrying capacity. 
More precisely, we will extend a corresponding result on one-type branching processes by \cite{Che22}, for which they introduced a coupling argument to show that the genealogy of the logistic branching process is well-approximated by the corresponding independent branching process as long as the growth rate is superlinear, which is true until the process almost reaches carrying capacity. 
We show here that these results still hold in our setting of multi-type logistic branching process with selection.

The second and third main results address the phase in which the population size is comparable to $K$, by looking at the process forward and backward in time, respectively.
In a finite variance regime, we obtain the limiting behaviour of the two-type frequency process: the wild-type frequency converges, as $K \to \infty$, to the unique strong solution of an SDE of the form
\begin{equation*}
    dX(t)=aX(t)(1-X(t))dt+\sqrt{bX(t)(1-X(t))+cX^2(t)(1-X(t))}dB(t),
\end{equation*}
for some constants $a\in \R$ and $b,\,c \geq 0$. 
The above diffusion, which presents a Wright--Fisher component, was introduced by Gillespie in \cite{G75}, and is known to describe the limiting frequency for a multi-type Wright--Fisher model with type-dependent reproduction mechanism \cite{G75,S78}. 
The same kind of equation has recently reappeared independently in the context of so-called Wright--Fisher models with efficiency, see e.g.\ the discussion in \cite{CPP21}. 
Our convergence proof relies heavily on the techniques introduced by \cite{Kat91} to study solutions of SDEs that are pushed onto a manifold by a large drift, see \Cref{sec:Katzenberger-method} for details.
This allows us to simplify the proofs based on the more classical approach of first investigating the convergence of the total population size and then considering the frequency process separately, see e.g.~\cite{KernWied24,For25}.

Under rather weak assumption on the expectation and variance of the offspring distribution, our limiting frequency process has a well-defined moment dual. 
As in many population genetics models (e.g.~Wright--Fisher model and Kingman coalescent), we would like to interpret the dual process as the lineage counting process of the (potential) genealogy of a sample of individuals. 
In the neutral case with the same finite-variance-type reproduction mechanism for both types, this has been done in the seminal work of Donnelly and Kurtz, \cite[Theorem 5.1]{DK99}. 
As soon as different reproduction mechanisms compete, their methodology breaks down, as it relies on the fact that the type is not relevant to the forward-in-time picture. 
In the recent work \cite{CKP23}, the authors introduce a $\Lambda$-Moran model with selection based on the assumption that the reproduction mechanisms are stochastically ordered. 
This assumption (see \eqref{eq:stoch-ordering}) has proved very helpful also in our setting where the population is not constant in size. 
In this scenario, we are indeed able to define the Ancestral Selection Graph (ASG) of the population and show that the corresponding lineage counting process converges to the dual of the limiting frequency process.\\

Finally, we would like to highlight two open questions. 
Firstly, the convergence of genealogies before reaching carrying capacity is proved herein and in \cite{Che22} for an arbitrary but finite sample size of individuals.
In the setting of the Wright--Fisher model with population size $K$, it is known that the genealogy undergoes phase transitions when the sample size grows with $K$, see e.g.~\cite{phase_transition_kingman} and the references therein.
As such, it would be of great interest to study in more details how the genealogy of the branching process behaves in a similar context, particularly before reaching carrying capacity.

The second open problem concerns the interpretation of the duality relation at carrying capacity.
Similarly to \cite{CKP23}, the interpretation of the dual process as a lineage counting process can only be made precise when we can couple the reproduction rates of the different types.
This stems from the fact that such a coupling yields a graphical construction of the process where the dependence on the types can be mostly suppressed.
We observe, however, that there is an intermediate regime for which we can obtain the moment duality but for which such a coupling is impossible.
This includes in particular the most intriguing regime in which the two reproductive mechanisms are allowed to have different variances, cf.~\cite{G75}.
It remains open whether it is possible to extend the interpretation of the moment duality as a sampling duality also in this regime.\\

The paper is structured as follows: we start by introducing the model and stating the three main results in \Cref{sec:model}; in \Cref{sec:initial-phase}, we prove the convergence of the genealogy in the first phase of superlinear growth; we then present, in \Cref{sec:Katzenberger-method}, the result of \cite{Kat91} and adapt it to our setting, so that we can use it to find the limit of the frequency process at carrying capacity in \Cref{sec:limit-carrying-capacity}; finally, in \Cref{sec:limit-genealogy} we define the ASG of the population at carrying capacity and prove the convergence of the corresponding lineage counting process.

\section{Model and main results}\label{sec:model}

Let us consider a population in which individuals carry one of two possible types in $\{+,-\}$. Each individual reproduces independently at rates determined by two finite measures $\mu_K^+$ and $\mu_K^-$ on $\N_0$. 
In particular, at rate $\mu_K^\pm(i)$, an individual of type $\pm$ produces $i$ children and then dies. 
The parameter $K$ represents the carrying capacity of the population and we will be interested in the regime $K\to \infty$. 
We assume that there exist constants $\mathfrak{m}>0$ and $\s^+,\,\s^-\in\R$ such that the mean reproduction rates $\mathfrak{m}^{\pm}_K=\sum_{i\geq 0} (i-1) \mu_K^\pm(i)$ satisfy
\begin{equation}\label{eq:marta_ass_on_means}
\mathfrak{m}_K^+ = \mathfrak{m} + \dfrac{\s^+}{K}+\text{o}\left(\frac{1}{K}\right)\qquad\text{ and }\qquad \mathfrak{m}_K^- = \mathfrak{m} + \dfrac{\s^-}{K} + \text{o}\left(\dfrac{1}{K}\right).
\end{equation}
Later on, we will assume $\mathfrak{s}^+ \geq \mathfrak{s}^-$, but for the moment, we do not assume them to be ordered.

To ensure that the population fluctuates around the large carrying capacity $K$, individuals do not only die at the natural rate $\mu_K^\pm(0)$, but have an additional type-independent death rate of $\mathfrak{m}/K$ times the current population size induced by competition with other individuals.
Finally, we assume that individuals can change their type with mutation rates $\theta^{+}/K$ from type $+$ to type $-$ and $\theta^-/K$ from $-$ to $+$.

The size of the above-described population evolves as a multi-type logistic branching process; more precisely, it is defined by a continuous-time Markov process $\left(N_K^+(t),N_K^-(t)\right)_{t \geq 0}$ on $\N_0 \times \N_0$ with transitions

 \begin{equation}\label{eq:pop-size-rates}
 \arraycolsep=3pt\def\arraystretch{1.9} (n,m) \mapsto   \left\{ \begin{array}{llllll}
          (n+i-1,m) & \text{at rate} \hspace{0.2cm}& n\,\mu^+_K(i) \quad \text{for } i\geq 1, \\
       (n,m+i-1) & \text{at rate} \hspace{0.2cm}& m\,\mu^-_K(i) \quad \text{for } i\geq1,\\
       (n-1,m) & \text{at rate} \hspace{0.2cm}& n\,\mu^+_K(0)+n(n+m)\frac{\mathfrak{m}}{K},\\
       (n,m-1) & \text{at rate} \hspace{0.2cm}& n\,\mu^-_K(0)+m(n+m)\frac{\mathfrak{m}}{K},\\
       (n-1,m+1) & \text{at rate} \hspace{0.2cm}& n\,\frac{\theta^+}{K},\\
       (n+1,m-1) & \text{at rate} \hspace{0.2cm}& m\,\frac{\theta^-}{K}.
            \end{array} \right. 
        \end{equation}
We denote by $(N_K(t))_{t \geq 0}$ the total population size, so that $N_K(t)=N^+_K(t)+N_K^-(t)$ for all $t\geq 0$. 
We put ourself in a finite variance regime and assume the existence of two limiting finite measures $\mu_\infty^\pm$ as well as of two positive numbers $\mathfrak{v}^+,\,\mathfrak{v}^->0$ such that
\begin{equation*}
    \sum_{i\geq 0} (i-1)\mu^\pm_{\infty}(i)=\mathfrak{m} \quad \quad \lim_{K\to \infty}\sum_{i\geq 0} (i-1)^2 \mu_K^\pm(i)=\sum_{i\geq 0} (i-1)^2\mu^\pm_{\infty}(i)=\mathfrak{v}^\pm,
\end{equation*}
and 
\begin{equation}\label{eq:convergence-measure}
\lim_{K \to \infty} \sum_{i\geq 0}\,\abs{\mu^{\pm}_K(i)-\mu^\pm_{\infty}(i)}=0.
\end{equation}
The definition of the branching Markov process can be naturally extended to $K = \infty$ by setting $\frac{1}{\infty} := 0$: the processes $(N_\infty^+(t))_{t \geq 0}$ and $(N_\infty^-(t))_{t \geq 0}$ become independent branching processes with combined transitions 
 \begin{equation*}
 \arraycolsep=3pt\def\arraystretch{1.9} (n,m) \mapsto   \left\{ \begin{array}{lll}
          (n+i-1,m) & \text{at rate} \hspace{0.2cm}& n\,\mu^+_\infty(i) \quad \text{for } i\geq 0, \\
       (n,m+i-1) & \text{at rate} \hspace{0.2cm}& m\,\mu^-_\infty(i) \quad \text{for } i\geq0.
            \end{array} \right. 
        \end{equation*}
Note that we do explicitly include the case $\mathfrak{v}^+\neq \mathfrak{v}^-$ which allows for drastically different reproduction mechanisms. 
Similar assumptions have been introduced from a biological perspective in \cite{G75} and a rigorous treatment of the corresponding model, in the constant population setting, can been found in \cite{S78}.

In order to keep track of the complete genealogy of the population we introduce the state space
\[
\mathcal{T}=\cup_{m\in \N}\N^m
\]
of finite tuples. We interpret any $u=(u_1,\dots,u_m)\in \mathcal{T}$ as an individual whose parent is given by the tuple $(u_1,\dots,u_{m-1})$, so that this representation encodes the entire ancestry of $u$. 
This induces the partial ordering $\prec$ of being ancestral to: more precisely two individuals $u=(u_1,\dots,u_m),v=(v_1,\dots,v_n)\in \mathcal{T}$ are ordered, $u\prec v$, if $m<n$ and $u_l=v_l$ for $l=1,\dots,m$. 
We denote the possible offspring of $u=(u_1,\dots,u_m)$ by $ui:=(u_1,\dots,u_m,i)$ for $i \in \N$.
Furthermore, we write
\begin{equation*}
\mathfrak{T} := \{ A \subseteq \mathcal{T}\;:\; \vert A\vert < +\infty\}
\end{equation*}
for the set of finite populations. For ease of notation, we identify $\mathfrak{T}\times \mathfrak{T}$ with the set of finite point processes on $\mathfrak{T}\times \{+,-\}$ through
\begin{equation*}
    (\mathcal{U},\mathcal{V}) \longleftrightarrow \sum_{u \in \mathcal{U}}\delta_u^+ + \sum_{v \in \mathcal{V}}\delta_v^-,
\end{equation*}
where $\delta_u^\pm$ is the Dirac measure on $(u,\pm)$.
The population corresponding to the above logistic branching process can be described through the continuous time Markov chain $(\mathcal{N}_K(t))_{t \geq 0}$ with values in $\mathfrak{T}\times \mathfrak{T}$ and transitions from $\mathcal{N}_K(t)=(\mathcal{N}_K^+(t), \mathcal{N}_K^-(t))$ to
 \begin{equation*}
 \arraycolsep=3pt\def\arraystretch{1.9} \left\{ \begin{array}{lll}
          \mathcal{N}_K(t)-\delta_u^
          \pm+ \sum_{j=1}^i \delta^\pm_{uj} & \text{at rate} \hspace{0.2cm}& \mu^\pm_K(i) \quad \text{for } u\in \mathcal{N}^\pm_K(t),\, i\geq 1, \\
       \mathcal{N}_K(t)-\delta_u^
          \pm & \text{at rate} \hspace{0.2cm}& \mu^+_K(0)+N_K(t) \frac{\mathfrak{m}}{K} \quad \text{for } u\in \mathcal{N}^\pm_K(t),\\
       \mathcal{N}_K(t)\mp \delta_u^
          \pm \pm  \delta_u^
          \mp  & \text{at rate} \hspace{0.2cm}& \frac{\theta^\pm}{K}\quad \text{for } u\in \mathcal{N}^\pm_K(t).\\
            \end{array} \right. 
        \end{equation*}

Following \cite{Che22}, we will introduce a coupling argument to show that the genealogy of the logistic branching process is well approximated by the correspondent independent branching process as long as the growth rate is superlinear. To do so, we will first consider the process up to a stopping time $T_K \to \infty$ 
for which there exist $\alpha\in(0,1)$ and $c>0$ such that 
\begin{equation*}
\lim_{K\to \infty}\P \left( \forall t\in[0,T_K),\quad \mathfrak{m}^{K,N_K(t)}\geq \frac{c}{N_K(t)^\alpha} \right) = 1,	
\end{equation*}
where $\mathfrak{m}^{K,N_K(t)}=(\mathfrak{m}^+_K \wedge \mathfrak{m}_K^-)-N_K(t)\frac{\mathfrak{m}}{K}$ is the \emph{per capita} minimum growth rate. 
Under assumptions \eqref{eq:marta_ass_on_means}, the mean \emph{per capita} growth rate is bounded from below by
\[
\mathfrak{m}\left(1 - \frac{N_K(t)}{K}\right) + \text{o}(1/K),
\]
so that for $N_K(t) \leq K$, we are interested in solving the equation
\begin{equation}\label{eq:marta_superlinear_population}
N_K(t)^\alpha - \dfrac{1}{K}N_K(t)^{1+\alpha} \geq c.
\end{equation}
for some $c>0$. Note that for $N_K(t)=1$ and $K$ large enough, the above equation is satisfied with $c=1/2$, and that the function $x\mapsto x^\alpha - x^{1+\alpha}/K$ is increasing up until $x = \frac{\alpha}{1+\alpha}K$, then decreasing.
In particular, \eqref{eq:marta_superlinear_population} is satisfied as long as $N_K(t)$ is bounded from above by some $n_{max}$ for which there exists $c>0$ such that $1 - n_{max}/K \geq c n_{max}^{-\alpha}$.

For $\beta\in (0,1)$, the above is satisfied before the stopping time $T_K^\beta := \inf\{t\geq 0\;:\; N_K(t)\geq K - K^\beta\}$ with $\alpha = 1-\beta$ as one has indeed
\begin{equation*}
    1 - \dfrac{K - K^\beta}{K} = K^{\beta - 1}.   
\end{equation*} 
This particular choice has the added convenience that the evolution of the process after the stopping time $T_K^\beta$ effectively starts from carrying capacity.

Before stating the first main result, we define the following quantities. For each individual $u \in \mathcal{T}$ we define by
\begin{equation*}
D_K^u(t)=\#\{v\in \mathcal{N}_K(t):u \preceq v \},
\end{equation*}
the number of descendants of $u$ that are alive at time $t \geq 0$, and by
\begin{equation*}
F^u_K(t)=\1_{\{N_K(t)>0\}}\frac{D_K^u(t)}{N_K(t)},
\end{equation*}
the fraction of the population descending from $u$.

We write $F_\infty(\infty) := \lim_{t\to +\infty} F_\infty(t)$ for the asymptotic fraction of descendants of $g$ in the limiting branching process without interactions; the existence of which follows as in \cite[Lemma 4.2]{Che22}.
The following result concerning the growth phase before hitting carrying capacity is in the spirit of \cite[Proposition 5.4]{Che22}. 
Using the arguments from \cite[Section 7]{Che22}, this also proves the convergence of the corresponding genealogies.

\begin{theorem}{\emph{(Initial growth)}}\label{thm:application_of_comparison}
Consider the stopping time $T^\beta_K := \inf\{t\geq 0\;:\; N_K(t) \geq K - K^\beta\}$ for some $\beta \in (0,1)$.
Then there exists a coupling such that,
\[
\lim_{K\to\infty} F_K^u(T^\beta_K) = F_\infty^u(\infty),
\]
in probability, for every $u\in\mathfrak{T}$.
\end{theorem}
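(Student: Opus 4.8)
The plan is to track the single scalar $F_K^u(t)=D_K^u(t)/N_K(t)$ and compare it, through a coupling, with the analogous fraction $F_\infty^u(t)$ in the limiting branching process, whose convergence $F_\infty^u(t)\to F_\infty^u(\infty)$ is already available. I would cut the time axis at a level $t_0$ that is sent to infinity only after $K$: on $[0,t_0]$ the population is still small and I couple the logistic process directly to the independent one, while on $[t_0,T_K^\beta]$ the population is large and I argue that $F_K^u$ is essentially frozen. Letting $K\to\infty$ and then $t_0\to\infty$ (along a slowly growing $t_0=t_0(K)$) gives the statement, and the reduction from a single individual to a finite collection $u\in\mathfrak{T}$ is immediate by additivity of $D_K^u$.

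For the first phase I build both processes on one probability space by sharing the reproduction clocks, coupling $\mu_K^\pm$ to $\mu_\infty^\pm$ optimally so that an individual has the same offspring number in both except with probability $\norm{\mu_K^\pm-\mu_\infty^\pm}_{\mathrm{TV}}\to 0$ by \eqref{eq:convergence-measure}, and adding the competition deaths and mutations as extra marks in the logistic process only. Up to the fixed time $t_0$ the total population is bounded in probability, so the expected number of competition and mutation events (each of per-capita rate $O(1/K)$) and the expected number of offspring mismatches all vanish as $K\to\infty$. Hence the two genealogies coincide on $[0,t_0]$ with probability tending to one, so that $F_K^u(t_0)=F_\infty^u(t_0)$ on that event, while $F_\infty^u(t_0)\to F_\infty^u(\infty)$ as $t_0\to\infty$.

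The core of the argument is the second phase. Applying the generator to $F_K^u$ reveals two cancellations: mutations change neither $D_K^u$ nor $N_K$ and so contribute nothing, and natural together with competition deaths hit a uniformly chosen individual, so their drift contribution to $F_K^u$ vanishes identically. What remains in the drift is only the type-asymmetry of reproduction, of order $\mathfrak{m}_K^+-\mathfrak{m}_K^-=O(1/K)$ from the means plus a term of order $(\mathfrak{v}^+-\mathfrak{v}^-)/N_K$ from the variances, while the quadratic variation accumulates at rate $O(1/N_K)$ from reproduction and $O(\mathfrak{m}/K)$ from competition. I then bound $F_K^u(T_K^\beta)-F_K^u(t_0)$ in $L^2$ by the integrated drift and quadratic variation. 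Using that the growth is superlinear until $T_K^\beta$ — so that on the survival event $T_K^\beta=O(\log K)$ and $N_K$ grows (super)exponentially, whence $\int_{t_0}^{T_K^\beta}N_K(s)^{-1}\dshort s=O(1/N_K(t_0))$ — all these contributions are of order $O(\log K/K)+O(1/N_K(t_0))$ and vanish when $K\to\infty$ and then $t_0\to\infty$. On the complementary extinction event the coupling makes both processes die out together and both fractions degenerate, so that case is trivial.

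The main obstacle is precisely this second phase. The competition deaths are not rare: their cumulative number up to $T_K^\beta$ is of larger order than $K$, so there is no hope of a path-by-path coupling all the way to carrying capacity. The resolution is the exact uniformity of the competition (and natural) deaths, which ensures that, however numerous, they create no net drift in the fraction and only an $O(\log K/K)$ quadratic variation; together with the $1/N_K$ decay of the reproduction fluctuations once the population is large, this pins $F_K^u$ near the value $F_\infty^u(\infty)$ it inherited during the small-population phase. The one genuinely technical ingredient feeding this step is the control of the growth speed — that the process reaches $K-K^\beta$ in time $O(\log K)$ and that $N_K(t_0)\to\infty$ — supplied by the comparison argument built around \eqref{eq:marta_superlinear_population}.
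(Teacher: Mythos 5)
Your two-phase skeleton (coupling with the independent branching process on an initial window, then an $L^2$ ``freezing'' estimate on $[t_0,T_K^\beta]$) is exactly the structure of the paper's proof (\Cref{cor:cheeky_zeta_K_min_T_K} plus \Cref{lem:Lemma5.1,lem:Lemma 5.2}), and your generator bookkeeping for the second phase --- uniformly placed competition deaths create no drift in the fraction, the residual drift is $O(1/K+1/N_K)$, the quadratic variation accumulates at rate $O(1/N_K+1/K)$ --- matches \Cref{lem:Lemma5.1}. The gap is in how you control the resulting time integrals. You bound $\int_{t_0}^{T_K^\beta} N_K(s)^{-1}\dx s$ by asserting that, on the survival event, $N_K$ grows (super)exponentially, that $T_K^\beta=O(\log K)$, and that $N_K(t_0)\to\infty$, and you attribute these facts to \eqref{eq:marta_superlinear_population}. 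But \eqref{eq:marta_superlinear_population} is a deterministic inequality about the \emph{per capita drift} (it is at least $cN_K^{\beta-1}$ before $T_K^\beta$); it says nothing pathwise, and it does not by itself yield exponential growth (near $K-K^\beta$ the per-capita rate is only of order $K^{\beta-1}$, so the growth there is far from exponential), nor an $O(\log K)$ hitting time, nor divergence of $N_K(t_0)$ on survival. Converting the drift inequality into usable stochastic control is the actual technical content of this phase, and it is missing from your argument. There is the further complication that your claims are conditioned on survival, and conditioning destroys the martingale/generator structure you integrate against.

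The paper closes exactly this hole without any pathwise growth or hitting-time statement: \Cref{lem:Lemma 5.2} proves, via a differential inequality, that
\[
\E\left[\frac{\1_{\{N_K(t)>0,\; t<T_K^\beta\}}}{N_K(t)}\right]\;\leq\; b\,t^{-\frac{1}{1-\beta}},
\]
uniformly in $K$. One applies the generator to $\mathcal{N}\mapsto \1_{\{|\mathcal{N}|\in M_K\}}/|\mathcal{N}|$, uses the superlinear drift to get $\partial_t\E[\,\cdot\,]\leq -\gamma\,\E\big[\1\, N_K(t)^{\beta-2}\big]$, and closes the inequality in $\E[\1/N_K]$ itself by Jensen; the resulting polynomial decay (the stochastic counterpart of the ODE $\dot n = cn^\beta$, i.e.\ polynomial, not exponential, growth) is integrable at infinity precisely because $\frac{1}{1-\beta}>1$, which is all that \Cref{lem:Lemma5.1} requires. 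Note also the paper's trick for the $O(1/K)$ drift and quadratic-variation terms: before $T_K^\beta$ one has $N_K\leq K$, hence $1/K\leq 1/N_K$, so these terms fold into the $1/N_K$ bound --- which is why no estimate on $T_K^\beta$ (such as your $O(\log K)$ claim) is ever needed. If you insist on your route, you would have to actually prove the growth and hitting-time claims (say, by comparison with a supercritical branching process while $N_K\leq K/2$ and a separate concentration argument on $[K/2, K-K^\beta]$); the expectation bound above is both weaker and sufficient.
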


On the event $\{T^\beta_K<\infty\}$, we can investigate the behaviour of the population around carrying capacity. 
For simplicity, we will assume from now on that the logistic branching process starts at carrying capacity, i.e.\ $N_K(0)/K\Rightarrow 1$ as $K\to \infty$, where $\Rightarrow$ denotes the convergence in distribution. 
As the rescaled population size concentrates around $1$, we may consider, instead of the frequency process of the two types when time is rescaled by a factor $K$, the Markov process $\left(X_K(t)\right)_{t \geq 0}=\left((X_K^+(t),X_K^-(t))\right)_{t \geq 0}$ on $[0,+\infty)^2$ given by
\begin{equation*}
    X_K^\pm:=\frac{N_K^\pm(Kt)}{K}.
\end{equation*}
Our second result regards the convergence of the above frequency process as $K\to \infty$. 

\begin{theorem}{\emph{(At carrying capacity, forward in time)}}\label{theo:convergence_at_carrying_capacity}
    Assume that $X_K(0) \Rightarrow X(0) = (1-W_0, W_0)$ with $W_0$ a.s.\ in $[0,1]$. Assume also that there exists $\eta>0$ such that $\sum_{i\geq 0} i^{2+\eta} \mu_K^\pm(i)$ is uniformly bounded in $K$. 
    Then, $X_K$ converges weakly to $X = (1-W, W)$ where $0\leq W\leq 1$ is the unique strong solution to the Gillespie--Wright--Fisher SDE with mutation
    \begin{equation}\label{eq:Gillespie-WF}
    \begin{split}
        \mathrm{d}W(t) &= -\Big(\big(\mathfrak{s}^+ -\mathfrak{v}^+\big)-\big(\mathfrak{s}^- - \mathfrak{v}^-\big)\Big)W(t)\big(1 - W(t)\big) \mathrm{d}t\\
        &\qquad+ \Big(\theta^+\big(1 - W(t)\big) - \theta^- W(t)\Big)\mathrm{d}t \\
        &\qquad + \sqrt{(\mathfrak{m}+\mathfrak{v}^-)W(t)\left(1-W(t)\right) - (\mathfrak{v}^+ - \mathfrak{v}^-)W^2(t)\left(1-W(t)\right)}\mathrm{d}B(t),
    \end{split}
    \end{equation}
    started from $W(0) = W_0$ and where $B$ is a standard Brownian motion. 
    Furthermore, if 
    \[
    \mathfrak{s}^+ -\mathfrak{v}^+\geq \mathfrak{s}^- -\mathfrak{v}^-,  \quad \mathfrak{v}^+ \geq \mathfrak{v}^-\quad\text{ and }\quad\theta^- = 0,
    \]
    the process $W$ satisfies the moment duality 
    \begin{equation}\label{eq:moment-duality}
      \E\left[\left(W(t)\right)^n \vert W(0)=w\right]=\E\left[w^{A(t)} \vert A(0)=n\right],
    \end{equation}
    for all $t \geq 0$ where $\left(A(t)\right)_{t \geq 0}$ is the $\N_0$-valued Markov branching--coalescing process with annihilation, with generator
    \begin{equation}\label{eq:dual-generator}
    \begin{split}
        \mathcal{L}_{\text{\emph{dual}}}f(n)=&\,\theta^+ n \big(f(n-1)-f(n)\big)\\
        &+ \Big(\big(\mathfrak{s}^+ -\mathfrak{v}^+\big)-\big(\mathfrak{s}^- - \mathfrak{v}^-\big)\Big)n \big(f(n+1)-f(n)\big)\\
        &+ (\mathfrak{m}+\mathfrak{v}^-)\binom{n}{2}\big(f(n-1)-f(n)\big)\\
        &+(\mathfrak{v}^+ - \mathfrak{v}^-)\binom{n}{2}\big(f(n+1)-f(n)\big).
        \end{split}
    \end{equation}
\end{theorem}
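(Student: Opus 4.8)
\emph{Forward convergence.} The plan is to treat the two assertions separately, since they rest on different machinery. For the convergence to \eqref{eq:Gillespie-WF} I would first compute the generator $\mathcal{A}_K$ of the time- and space-rescaled process $X_K=(N_K^+(K\cdot)/K,N_K^-(K\cdot)/K)$ acting on smooth $f$ on $[0,\infty)^2$. Writing $x=n/K$, $y=m/K$ and Taylor-expanding each jump, the reproduction/death and competition transitions produce a drift of order $K$, namely $K\mathfrak{m}\,(1-(x+y))\,(x,y)\cdot\nabla f$, together with order-one drift, diffusion and (vanishing) jump contributions; the assumptions \eqref{eq:marta_ass_on_means} and \eqref{eq:convergence-measure} are exactly what is needed to separate these scales. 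The key structural observation is that the large drift is radial and vanishes precisely on the segment $\Gamma=\{(x,y):x+y=1\}$, which is therefore the stable manifold onto which the dynamics is forced, with stable foliation given by the rays through the origin. This puts us in the framework of \Cref{sec:Katzenberger-method}.

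Next I would verify the hypotheses of the Katzenberger-type theorem adapted in \Cref{sec:Katzenberger-method}: asymptotic stability of $\Gamma$ (with the uniform Lyapunov control coming from the logistic drift), tightness of $X_K$, and convergence of the compensated quadratic variation. Here the uniform bound on $\sum_i i^{2+\eta}\mu_K^\pm(i)$ enters as a Lindeberg-type condition, guaranteeing that individual jumps become negligible and that the limit is a diffusion rather than a jump process. Granting the theorem, $X_K$ converges to a process supported on $\Gamma$, which in the coordinate $W=x^-/(x^-+x^+)$ — the fraction projected along the rays, i.e.\ $\pi(x,y)=y/(x+y)$ — is a one-dimensional diffusion. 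Its generator is obtained from the order-one part $\mathcal{A}_0$ via $\mathcal{L}g(w)=\mathcal{A}_0(g\circ\pi)\big|_{\Gamma}$; carrying out this projection, where the curvature terms $\tfrac12 a:\nabla^2\pi$ contribute the $(\mathfrak{v}^+-\mathfrak{v}^-)$ part, yields \eqref{eq:Gillespie-WF}. Strong existence and pathwise uniqueness on $[0,1]$ then follow from a Yamada--Watanabe argument, since the drift is Lipschitz and the diffusion coefficient is Hölder-$\tfrac12$, together with a check that the boundary points $\{0,1\}$ are not exited so the solution stays in $[0,1]$.

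\emph{Moment duality.} For \eqref{eq:moment-duality} I would use the generator (or ``algebraic'') duality method with duality function $H(w,n)=w^n$. Writing $\mathcal{L}_W$ for the generator of \eqref{eq:Gillespie-WF}, the core computation is to apply $\mathcal{L}_W$ to $w\mapsto w^n$ and $\mathcal{L}_{\text{dual}}$ to $n\mapsto w^n$ and verify $\mathcal{L}_W H(\cdot,n)(w)=\mathcal{L}_{\text{dual}}H(w,\cdot)(n)$ termwise: the selection and $\theta^+$ drifts match the $n$-linear branching and coalescence terms, while the diffusion term, once expanded through $\binom{n}{2}$, matches the two $\binom{n}{2}$-terms of \eqref{eq:dual-generator}. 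The three standing assumptions enter precisely here: $\mathfrak{s}^+-\mathfrak{v}^+\ge\mathfrak{s}^--\mathfrak{v}^-$ and $\mathfrak{v}^+\ge\mathfrak{v}^-$ make the rates $(\mathfrak{s}^+-\mathfrak{v}^+-\mathfrak{s}^-+\mathfrak{v}^-)n$ and $(\mathfrak{v}^+-\mathfrak{v}^-)\binom{n}{2}$ nonnegative, so that $A$ is a genuine Markov process, and $\theta^-=0$ removes the single term $-\theta^- n\,w^n$ that would otherwise have no counterpart in a pure branching--coalescing dual. Since $\mathcal{L}_{\text{dual}}$ has at most quadratic rates, $A$ does not explode, and the generator identity upgrades to the expectation identity \eqref{eq:moment-duality} by the standard duality theorem (integrating $\tfrac{d}{ds}\E[H(W(t-s),A(s))]=0$), using that $\{w^n:n\in\N_0\}$ is measure-determining on $[0,1]$.

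\emph{Main obstacle.} I expect the genuinely hard step to be the rigorous application of the Katzenberger machinery to this jump process: controlling the excursions of $X_K$ away from $\Gamma$ and near the boundary of the simplex uniformly in $K$, and verifying tightness and the Lindeberg condition with only a $2+\eta$ moment on the offspring law. The duality, by contrast, is a finite algebraic verification once the limiting SDE is in hand.
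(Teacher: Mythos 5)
Your proposal follows essentially the same route as the paper's proof: the paper likewise splits the generator of $X_K$ into a diffusion part plus the large radial drift $K\,F(x)^{T}\cdot\nabla f$ with $F(x)=\mathfrak{m}\,(1-x^+-x^-)\,x$, verifies the hypotheses of the Katzenberger theorem (uniform integrability of $[X_K]$ and asymptotically vanishing jumps via the $(2+\eta)$-moment bound, stability of $\Gamma$ via the eigenvalues of $\partial F$), and identifies the limit by projecting along $\Phi(x)=x/(x^++x^-)$ -- exactly your map $\pi$, second-derivative (curvature) terms included. The only difference is one of delegation: where you invoke Yamada--Watanabe and carry out the moment-duality generator computation by hand, the paper obtains both strong uniqueness of \eqref{eq:Gillespie-WF} and the duality \eqref{eq:moment-duality} by adapting/citing \cite[Theorem 2]{CPP21}; note only that your non-explosion argument for $A$ should rest on the coalescence rate $(\mathfrak{m}+\mathfrak{v}^-)\binom{n}{2}$ dominating the quadratic branching rate rather than on ``quadratic rates'' alone, since a pure birth process with quadratic rates does explode.
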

As mentioned in the introduction, we would like to interpret the dual process as the lineage counting process of the (potential) genealogy of a sample of individuals from the population. 
In this work, we restrict ourselves to stochastically ordered measures $\mu_K^\pm$, in the sense that 
\begin{equation}\label{eq:stoch-ordering}
  \mu_K^-([i,+\infty))\leq \mu_K^+ ([i,+\infty)),
\end{equation}
for all $i\geq 1$ and 
\begin{equation}\label{eq:intro-ordering_death_rates}
    \mu^+_K(0) \leq \mu^-_K(0),
\end{equation}
see also \cite{CKP23}. Furthermore we assume that the higher moments of the reproduction measures $\mu_K^\pm$ satisfy
\begin{equation}\label{eq:higher-moments}
\sum_{i\geq 0} i^\ell\mu^\pm_{K}(i)= \text{o}(K^{\ell-2}). 
\end{equation}
for all $i\geq 1$ and $\ell\geq 3$. We will show, in \Cref{lem:properties_mixed_moments}, that these assumptions imply $\mathfrak{v}^+ = \mathfrak{v}^-:=\mathfrak{v}$. In this setting, we must have $\mathfrak{s}^+\geq \mathfrak{s}^-$, so that individuals of type $+$ have a selective advantage consisting of a higher mean reproduction rate.
Note that the usual Moran model with selection satisfies these assumptions: indeed in that case we have $\mu_K^-(2)=1$, $\mu_K^+(2)=1+s/K$ for some constant $s\geq 0$ and $\mu_K^\pm(i)=0$ for all $i\neq 2$. 

Now, let $T>0$ be a time horizon and consider the ASG of a sample of $k\geq 2$ individuals at time $KT$ (see \Cref{sec:limit-genealogy} for precise definitions). Let $(A_K(t))_{t \in [0,T]}$ be the corresponding time-rescaled lineage counting process. Our third result states that $A_K$ converges to the dual process \eqref{eq:dual-generator} under the above additional assumption. 

\begin{theorem}{\emph{(At carrying capacity, backward in time)}}\label{thm:block-counting-convergence}
Let $(A_K(t))_{t \in [0,T]}$ be the lineage counting process introduced above and let $\left(A(t)\right)_{t \in [0,T]}$ be the continuous time Markov process with transitions
\begin{equation}\label{eq:lineage-counting-lim}
 \arraycolsep=3pt\def\arraystretch{1.9} n \mapsto   \left\{ \begin{array}{lll}
          n-1 & \text{at rate }& \binom{n}{2}(\mathfrak{m} + \mathfrak{v})+n\,\theta^+, \\
       n+1 & \text{at rate }& n\big(\mathfrak{s^+}-\mathfrak{s^-}\big).
            \end{array} \right. 
        \end{equation}
Assume that conditions \eqref{eq:stoch-ordering}, \eqref{eq:intro-ordering_death_rates} and \eqref{eq:higher-moments} are satisfied as well as all the assumptions of \Cref{theo:convergence_at_carrying_capacity}. Then, if $A_K(0)\Rightarrow A(0)$, the sequence $A_K
$ converges weakly to $A$, as $K \to \infty$, in the space of c\`adl\`ag functions $\mathcal{D}\left([0,T],\N\right)$.
\end{theorem}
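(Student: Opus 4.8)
The plan is to realise the forward population and its genealogy on a single graphical representation and then read off the ASG, and its lineage counting process $A_K$, backward in time; the convergence then reduces to showing that the rescaled backward transition rates of $A_K$ converge to those of $A$ in \eqref{eq:lineage-counting-lim}. Following \cite{CKP23}, I would use the stochastic ordering \eqref{eq:stoch-ordering}--\eqref{eq:intro-ordering_death_rates} to couple the two reproduction mechanisms on the same Poisson clocks, so that every type-$-$ reproduction is matched with a type-$+$ reproduction producing at least as many offspring and the ``extra'' type-$+$ offspring are precisely the selective events. In this picture the backward events affecting the sampled lineages (defined in \Cref{sec:limit-genealogy}) are coalescences, from reading a reproduction event in reverse; branchings, from a selective arrow supplying a second potential parent; mutation events; and multiple or simultaneous mergers. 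The entire task is to compute the rescaled rates of these and pass to the limit.

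First I would establish that, on the rescaled interval $[0,T]$, the rescaled total mass $N_K(K\,\cdot)/K$ stays uniformly close to $1$ with probability tending to one. This concentration on the Katzenberger manifold $\{N/K=1\}$ follows from the analysis behind \Cref{theo:convergence_at_carrying_capacity} (note that $X_K^+ + X_K^- = N_K(K\,\cdot)/K \Rightarrow 1$), and it is what lets me replace the fluctuating population size by $K$ in every rate; I would make this rigorous by stopping at the first time the rescaled mass leaves a small neighbourhood of $1$ and showing this stopping time exceeds $T$ with high probability. With the population size pinned at $\approx K$, the per-pair coalescence rate coming from a reproduction event with $i$ offspring is $\approx \binom{i}{2}/\binom{N_K}{2}$, and summing against the event intensity and rescaling time by $K$ yields, in the limit, $2\sum_{i\ge 2}\binom{i}{2}\mu_\infty^\pm(i)=\mathfrak{m}+\mathfrak{v}^\pm$ per pair. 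Since \eqref{eq:higher-moments} forces $\mathfrak{v}^+=\mathfrak{v}^-=\mathfrak{v}$ (\Cref{lem:properties_mixed_moments}), this rate is type-blind and gives the coalescent term $\binom{n}{2}(\mathfrak{m}+\mathfrak{v})$. The selective arrows arrive, per lineage, at the rescaled rate of excess type-$+$ reproduction, $K(\mathfrak{m}_K^+-\mathfrak{m}_K^-)\to \mathfrak{s}^+-\mathfrak{s}^-$ by \eqref{eq:marta_ass_on_means}, giving $n(\mathfrak{s}^+-\mathfrak{s}^-)$; the mutation events $+\to-$ occur per lineage at rescaled rate $\theta^+$ and, with $\theta^-=0$, prune a lineage according to the ASG rules, contributing $n\theta^+$ to the downward rate. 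Finally, \eqref{eq:higher-moments} with $\ell=3$ gives $\sum_i\binom{i}{3}\mu_K^\pm(i)/K\to 0$, so triple, higher, and simultaneous mergers vanish in the limit.

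With the rates identified, I would conclude by a generator/martingale-problem argument. The limiting process $A$ is a well-posed $\N$-valued jump process (coalescence dominates branching for large $n$, so it is non-explosive), and on the core of bounded functions the generator of $A_K$ converges to that of $A$ uniformly on compacts; together with the uniform rate bounds this yields tightness in $\mathcal{D}([0,T],\N)$ and identifies every subsequential limit as $A$, whence $A_K\Rightarrow A$ by the standard martingale-problem convergence theory (Ethier--Kurtz) once $A_K(0)\Rightarrow A(0)$. Because $A_K$ is not itself Markov --- its rates depend on $N_K$ and on the types carried by the lineages --- this step is carried out for $A_K$ together with its environment (or after conditioning on it), the environment being asymptotically deterministic by the first step.

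The main obstacle is exactly this entanglement of $A_K$ with the random environment and with the backward type configuration along the ASG. Two points require care: replacing the fluctuating $N_K$ by $K$ uniformly over $[0,T]$ in the time-reversed rates (controlled by the concentration estimate and a stopping-time argument, with the rare excursions away from carrying capacity shown to be negligible), and showing that the type-dependence of the coalescence and reproduction rates washes out. The latter is where the hypotheses do the essential work: $\mathfrak{v}^+=\mathfrak{v}^-$ makes the coalescence rate independent of the unknown backward-in-time types, while the stochastic-ordering coupling cleanly separates the neutral reproductions from the selective arrows, so that the only surviving type-sensitive contribution is the linear selective branching $n(\mathfrak{s}^+-\mathfrak{s}^-)$.
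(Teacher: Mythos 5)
Your proposal follows essentially the same route as the paper's proof: the \cite{CKP23}-style coupling of the two reproduction mechanisms via the stochastic ordering \eqref{eq:stoch-ordering}--\eqref{eq:intro-ordering_death_rates}, a graphical construction whose ASG is read backward in time, concentration of $N_K(K\cdot)/K$ near $1$ on $[0,T]$, the rate asymptotics $\binom{n}{2}(\mathfrak{m}+\mathfrak{v})$ for pairwise coalescence, $n(\mathfrak{s}^+-\mathfrak{s}^-)$ for selective branching and vanishing multiple/simultaneous mergers, followed by a generator/martingale-problem conclusion. The only place where the paper is more concrete than your sketch is the treatment of the non-Markovianity: instead of ``conditioning on the asymptotically deterministic environment'', the paper couples $A_K$ (which carries a cemetery state for the rare type-indeterminate configurations) with an auxiliary \emph{autonomous} Markov chain $B_K^L$ whose transition probabilities are frozen at the worst-case population size $N_K^\uparrow$, justified by a monotonicity-in-$N$ lemma, and shows the coupling persists with high probability up to the threshold stopping time --- which is the same idea you describe, made precise.
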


\section{The initial growth phase}\label{sec:initial-phase}
In this section, we study the initial growth phase of the branching process until the population ``almost'' reaches carrying capacity, and prove \Cref{thm:application_of_comparison}. Our aim is to show that the genealogy of the population $\mathcal{N}_K$ is well approximated, as the carrying capacity grows to infinity, by the one of the branching process with exponential growth $\mathcal{N}_\infty$. 
The initial growth is characterised by two behaviours: at first, the logistic control has no effect and the population grows exponentially, in the second part, the population starts to behave diffusively, but it does not last long enough to have an effect on the genealogy. We study these in \Cref{cor:cheeky_zeta_K_min_T_K} on the one hand and \Cref{lem:Lemma5.1,lem:Lemma 5.2} on the other. 

\begin{lemma}\label{cor:cheeky_zeta_K_min_T_K}
There exists a coupling and a deterministic sequence of times $\zeta_K\to +\infty$ such that for any $\beta\in (0,1)$ $u\in \mathfrak{T}$,
\begin{equation*}
\lim_{K\to \infty} F^u_K(\zeta_K\wedge T^{\beta}_K)=F^u_\infty(\infty),
\end{equation*}
in probability, where $T_K^\beta := \inf\{t\geq 0\;:\; N_K(t)\geq K - K^\beta\}$.
\end{lemma}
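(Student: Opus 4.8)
The plan is to control the two sources of error separately via the decomposition
\[
F^u_K(\zeta_K \wedge T^\beta_K) - F^u_\infty(\infty) = \underbrace{\big(F^u_K(\zeta_K\wedge T^\beta_K) - F^u_\infty(\zeta_K\wedge T^\beta_K)\big)}_{(\mathrm{I})} + \underbrace{\big(F^u_\infty(\zeta_K\wedge T^\beta_K) - F^u_\infty(\infty)\big)}_{(\mathrm{II})},
\]
and to show that both $(\mathrm{I})$ and $(\mathrm{II})$ tend to $0$ in probability. Term $(\mathrm{I})$ will be dealt with by a coupling under which the genealogies of the logistic process $\mathcal{N}_K$ and of the limiting branching process $\mathcal{N}_\infty$ coincide up to $\zeta_K\wedge T^\beta_K$ with probability tending to $1$; term $(\mathrm{II})$ by the almost-sure convergence of $F^u_\infty(t)$ as $t\to\infty$, evaluated along the diverging random times $\zeta_K\wedge T^\beta_K$. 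The deterministic sequence $\zeta_K\to\infty$ will be chosen to diverge slowly enough that the error estimates below hold.

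First I would build $\mathcal{N}_K$ and $\mathcal{N}_\infty$ on a common probability space, started from the same initial configuration. Each living individual carries an independent reproduction clock; when it rings for an individual of type $\pm$, the offspring number is drawn from a maximal coupling of $\mu_K^\pm$ and $\mu_\infty^\pm$, so that the two processes agree on that event with probability $1-\norm{\mu_K^\pm-\mu_\infty^\pm}_{\mathrm{TV}}$. In $\mathcal{N}_K$ alone, each individual additionally carries a competition clock ringing at rate $N_K(t)\,\m/K$ and a mutation clock ringing at rate $\theta^\pm/K$; these have no counterpart in $\mathcal{N}_\infty$. Since mutations do not alter the tree encoded in $\mathcal{T}$ (they only relabel types) but would change the reproduction law used subsequently, the genealogies — hence the descendant counts and the total sizes — of the two processes coincide up to the first time $\tau_K$ at which either (i) a competition death occurs, (ii) the maximal coupling produces disagreeing offspring numbers, or (iii) a mutation occurs. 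On the event $\{\tau_K > \zeta_K\wedge T^\beta_K\}$ we then have $D^u_K = D^u_\infty$ and $N_K = N_\infty$ at time $\zeta_K\wedge T^\beta_K$, so that $(\mathrm{I})=0$ identically.

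Then I would bound $\P(\tau_K \le \zeta_K\wedge T^\beta_K)$. Before $\tau_K$ the two processes are equal, so the instantaneous rate of each of the three ``extra'' events is expressed through $N_\infty$; by a compensator (optional-stopping) argument,
\[
\P\big(\tau_K \le \zeta_K\wedge T^\beta_K\big) \le \E\!\left[\int_0^{\zeta_K}\!\Big(\tfrac{\m}{K}N_\infty(s)^2 + c\,\delta_K\,N_\infty(s) + \tfrac{\theta}{K}N_\infty(s)\Big)\mathrm{d}s\right],
\]
where $\delta_K:=\max_{\pm}\norm{\mu_K^\pm-\mu_\infty^\pm}_{\mathrm{TV}}\to 0$ by \eqref{eq:convergence-measure}, $c$ bounds the total reproduction rates and $\theta:=\theta^+\vee\theta^-$. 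Using $\E[N_\infty(s)]=e^{\m s}$ and the finite-variance bound $\E[N_\infty(s)^2]\le C e^{2\m s}$, the three integrals are at most constant multiples of $K^{-1}e^{2\m\zeta_K}$, $\delta_K e^{\m\zeta_K}$ and $K^{-1}e^{\m\zeta_K}$. Because $\delta_K\to 0$ and $K^{-1}\to 0$, a diagonal argument produces a sequence $\zeta_K\to\infty$ for which all three bounds vanish; then $\P(\tau_K\le\zeta_K\wedge T^\beta_K)\to 0$ and $(\mathrm{I})\to 0$ in probability.

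Finally, for $(\mathrm{II})$, I observe that $\zeta_K\wedge T^\beta_K\to\infty$ in probability: $\zeta_K\to\infty$ deterministically, while $\P(T^\beta_K\le M)=\P(\sup_{s\le M}N_K(s)\ge K-K^\beta)\to 0$ for every fixed $M$, since $\E[N_K(M)]\le N_K(0)\,e^{(\m+\mathrm{o}(1))M}$ stays bounded as $K\to\infty$ and the population therefore cannot reach a size of order $K$ by a fixed time. As $F^u_\infty(t)\to F^u_\infty(\infty)$ almost surely by \cite[Lemma 4.2]{Che22}, evaluating this limit along the diverging random times $\zeta_K\wedge T^\beta_K$ yields $(\mathrm{II})\to 0$ in probability, which completes the argument. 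The delicate point — and the only place where genuine care is needed — is the absence of any rate in the total-variation convergence \eqref{eq:convergence-measure}, which must be balanced against the competition contribution growing like $K^{-1}e^{2\m\zeta_K}$; this is precisely what forces the rate-adaptive, slowly diverging choice of $\zeta_K$, whereas the remaining estimates are routine.
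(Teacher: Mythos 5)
Your overall architecture is essentially the paper's own: the paper's proof consists precisely of exhibiting such a coupling (joint jumps of $\mathcal{N}_K$ and $\mathcal{N}_\infty$ at the minimum rates $\mu_\infty^\pm\wedge\mu_K^\pm$, with the excess reproduction rates, the competition deaths and the mutations occurring separately) and then deferring the quantitative estimates to \cite[Lemma 4.3]{Che22}. Your decomposition into (I) and (II), the decoupling time $\tau_K$, the first-moment/second-moment bounds on $N_\infty$, and the rate-adaptive slowly diverging choice of $\zeta_K$ are exactly those deferred details written out; your ``maximal coupling at shared clocks'' is the same object as the paper's minimum-rate coupling, with per-individual discrepancy rate $\sum_i\abs{\mu_K^\pm(i)-\mu_\infty^\pm(i)}$ controlled by \eqref{eq:convergence-measure}.

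One step, however, is not justified as written: you deduce $\P\big(T_K^\beta\leq M\big)=\P\big(\sup_{s\leq M}N_K(s)\geq K-K^\beta\big)\to 0$ from the boundedness of $\E[N_K(M)]$, but a first-moment bound at the fixed time $M$ does not control a running supremum (in principle the logistic process could overshoot and be pushed back down by competition before time $M$). The conclusion is true and the fix is standard: since the per-capita net growth rate of $N_K$ is bounded above by $\mathfrak{m}_K:=\mathfrak{m}_K^+\vee\mathfrak{m}_K^-=\mathfrak{m}+\mathcal{O}(1/K)$, the process $e^{-\mathfrak{m}_K t}N_K(t)$ is a nonnegative supermartingale, and Doob's maximal inequality yields
\[
\P\Big(\sup_{s\leq M} N_K(s)\geq K-K^\beta\Big)\;\leq\; \dfrac{N_K(0)\,e^{\mathfrak{m}_K M}}{K-K^\beta}\;\longrightarrow\;0.
\]
Alternatively, you can avoid any new estimate: on your coupling event $\{\tau_K>\zeta_K\wedge T_K^\beta\}$, whose probability tends to one, $N_K$ coincides with $N_\infty$ up to $\zeta_K\wedge T_K^\beta$, and $\sup_{s\leq M}N_\infty(s)<\infty$ almost surely since the limiting branching process does not explode; hence $T_K^\beta\to\infty$ in probability. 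With this patch your argument is complete and matches the paper's.
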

\begin{proof}
The proof, as in \cite[Lemma 4.3]{Che22}, goes through \emph{mutatis mutandis} by considering a coupling of $(\mathcal{N}_K)_{K\in \N \cup \infty}$ in such a way that the $\mathcal{N}_K$, $K\in \N$, are conditionally independent given $\mathcal{N}_\infty$, and with transitions from $(\mathcal{N}_\infty(t),\mathcal{N}_K(t))$ to 

 \begin{equation*}
 \arraycolsep=3pt\def\arraystretch{1.9} \left\{ \begin{array}{llllllll}
          \Big(\mathcal{N}_\infty(t)-\delta_u^
          \pm+ \sum_{j=1}^i \delta^\pm_{uj}\,,\, \mathcal{N}_K(t)-\delta_u^
          \pm+ \sum_{j=1}^i \delta^\pm_{uj}\Big) & \text{at rate } & \mu^\pm_\infty(i) \wedge \mu^\pm_K(i) \quad\\ & & \text{for } u\in \mathcal{N}^\pm_\infty(t) \cap \mathcal{N}^\pm_K(t),\, i\geq 0, \\
          
          \Big(\mathcal{N}_\infty(t)-\delta_u^
          \pm+ \sum_{j=1}^i \delta^\pm_{uj}\,,\, \mathcal{N}_K(t)\Big) & \text{at rate } &  \mu^\pm_\infty(i)-\mu^\pm_\infty(i) \wedge \mu^\pm_K(i)\1_{\{u\in \mathcal{N}_K^\pm(t)\}} \quad\\ & & \text{for } u\in \mathcal{N}^\pm_\infty(t) ,\, i\geq 0, \\
          
          \Big(\mathcal{N}_\infty(t)\,,\, \mathcal{N}_K(t)-\delta_u^
          \pm+ \sum_{j=1}^i \delta^\pm_{uj}\Big) & \text{at rate } &  \mu^\pm_K(i)-\mu^\pm_\infty(i) \wedge \mu^\pm_K(i)\1_{\{u\in \mathcal{N}_\infty^\pm(t)\}} \quad\\ & & \text{for } u\in \mathcal{N}^\pm_K(t) ,\, i\geq 0, \\
          
       \Big(\mathcal{N}_\infty(t)\,,\,\mathcal{N}_K(t)-\delta_u^
          \pm\Big) & \text{at rate} & N_K(t) \frac{\mathfrak{m}}{K} \quad \text{for } u\in \mathcal{N}^\pm_K(t),\\
       \Big(\mathcal{N}_\infty(t)\,,\,\mathcal{N}_K(t)\mp \delta_u^
          \pm \pm  \delta_u^
          \mp\Big)  & \text{at rate} & \frac{\theta^\pm}{K}\quad \text{for } u\in \mathcal{N}^\pm_K(t).\\
            \end{array} \right. 
        \end{equation*}
\end{proof}

Following \cite[Section 5]{Che22}, we use the rest of this section to show that, in the time interval $[\zeta_K,T_K]$, the genealogy does not change in the limit $K \to \infty$.
Since the necessary changes are more subtle, we will provide the complete proofs.
To make the computations more clear, we consider the case $\theta^+=\theta^-=0$, but the proofs follow through also in the general case.

\begin{lemma}\label{lem:Lemma5.1}
Let $T_K$ be a stopping time w.r.t.~the natural filtration of $\mathcal{N}_K$ and write $\tau_K := \inf\{t: N_K(t) \geq K\}$ for the time that the logistic branching reaches its carrying capacity for the first time.
Then, for $\widehat{T}_K := T_K\wedge \tau_K$, there exists a constant $c > 0$ independent of $K$ s.t.~for all $s \geq 0$,
\[
\mathbb{E}\left[ \Big( F_K^u(\widehat{T}_K) - F_K^u(s\wedge \widehat{T}_K)\Big)^2\right] \leq c\int_s^{+\infty} \mathbb{E}\left[\dfrac{\1_{\{N_K(t) > 0, t < \widehat{T}_K\}}}{N_K(t)}\right]\dx t.
\]
\end{lemma}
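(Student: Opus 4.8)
The plan is to exploit a Dynkin-type decomposition of the bounded functional $F^u_K$ along the dynamics of $\mathcal{N}_K$, and to control separately its quadratic variation and the drift generated by selection. Write $\mathcal{L}_K$ for the generator of the Markov chain $\mathcal{N}_K$, regard $F^u_K$ as a bounded $[0,1]$-valued function of the state, and let $\Gamma_K(F)=\mathcal{L}_K(F^2)-2F\mathcal{L}_K F$ denote the associated carr\'e du champ. Since the rates are bounded up to the stopping time $\widehat T_K\le\tau_K$ (there $N_K\le K$, so the total jump rate is finite) and $F^u_K$ is bounded, the stopped process $t\mapsto F^u_K(t\wedge\widehat T_K)$ admits the decomposition $F^u_K(t\wedge\widehat T_K)=F^u_K(0)+M^u(t)+\int_0^{t\wedge\widehat T_K}\mathcal{L}_K F^u_K\,\mathrm{d}r$ with $M^u$ a square-integrable martingale. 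Applying Dynkin's formula to $(F^u_K)^2$, using $\mathcal{L}_K((F^u_K)^2)=2F^u_K\mathcal{L}_K F^u_K+\Gamma_K(F^u_K)$ together with the martingale property of $M^u$, and letting $t\to+\infty$ by dominated convergence (writing $u:=s\wedge\widehat T_K$), I would first establish the exact identity
\[
\E\Big[\big(F_K^u(\widehat T_K)-F_K^u(u)\big)^2\Big]=\E\Big[\int_{u}^{\widehat T_K}\Gamma_K(F^u_K)(r)\,\mathrm{d}r\Big]+2\,\E\Big[\int_{u}^{\widehat T_K}\big(F^u_K(r)-F^u_K(u)\big)\mathcal{L}_K F^u_K(r)\,\mathrm{d}r\Big].
\]
This reduces the claim to two pointwise estimates, each of order $1/N_K$, on the two integrands.

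For the carr\'e du champ I would enumerate the jumps: a reproduction of a descendant of $u$ into $i$ offspring changes $F^u_K$ by $\frac{(i-1)(N_K-D^u_K)}{N_K(N_K+i-1)}$, of magnitude at most $(i-1)/N_K$, and analogously for reproductions of non-descendants, for natural deaths, and for the type-independent logistic deaths. Weighting the squared increments $O\big((i-1)^2/N_K^2\big)$ by their rates --- at most $N_K\mu^\pm_K(i)$ for reproduction and natural death, and $N_K^2\mathfrak{m}/K$ for logistic death --- and summing, the finite-variance assumption $\sum_i(i-1)^2\mu^\pm_K(i)\to\mathfrak{v}^\pm$ (hence uniformly bounded in $K$) yields $\Gamma_K(F^u_K)\le c/N_K$ on $\{N_K>0\}$. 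It is precisely here that stopping at $\widehat T_K\le\tau_K$ is essential: since $N_K\le K$ before $\tau_K$, the logistic contribution $N_K^2\tfrac{\mathfrak{m}}{K}\cdot O(1/N_K^2)$ is itself $O(1/N_K)$.

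The main obstacle is the second integrand, which carries the drift $\mathcal{L}_K F^u_K$ absent in the one-type setting of \cite{Che22}: the distinct reproduction laws of the two types break the martingale property of $F^u_K$. A direct computation shows that the logistic and death contributions to the drift cancel between descendants and non-descendants, leaving $\mathcal{L}_K F^u_K=\big(D^{u,+}_K-F^u_K N^+_K\big)\sum_i\big(\mu^+_K(i)-\mu^-_K(i)\big)\tfrac{i-1}{N_K+i-1}$, where $D^{u,+}_K$ and $N^+_K$ are the type-$+$ counts of descendants of $u$ and of the whole population. Expanding $\tfrac{i-1}{N_K+i-1}=\tfrac{i-1}{N_K}+O\big(\tfrac{(i-1)^2}{N_K^2}\big)$, the bracket equals $\tfrac{\mathfrak{m}^+_K-\mathfrak{m}^-_K}{N_K}$ plus a remainder of order $1/N_K^2$ controlled by the uniformly bounded second moments; since $\mathfrak{m}^+_K-\mathfrak{m}^-_K=O(1/K)$ by \eqref{eq:marta_ass_on_means} and $N_K\le K$, the bracket is $O(1/N_K^2)$. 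As $\big|D^{u,+}_K-F^u_K N^+_K\big|\le N_K$, this gives $|\mathcal{L}_K F^u_K|\le c/N_K$ on $\{N_K>0,\ r<\tau_K\}$. The crucial point is that I would \emph{not} square this drift; instead I pair it with the bounded factor $|F^u_K(r)-F^u_K(u)|\le 1$, so the second integrand is also bounded by $c/N_K$. Substituting both bounds into the identity and enlarging the integration domain to $[s,+\infty)$ with the indicator $\1_{\{N_K>0,\,r<\widehat T_K\}}$ yields the stated inequality. The subtle steps to get right are thus the exact cancellation and the order of the selection drift (where the variance difference $\mathfrak{v}^+\neq\mathfrak{v}^-$ contributes to the $1/N_K^2$ remainder), and the observation that pairing the drift with the bounded increment --- rather than estimating its square --- is what preserves the linear $\int 1/N_K$ form of the bound.
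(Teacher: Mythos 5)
Your proposal is correct and takes essentially the same route as the paper. The paper's starting identity
\[
\E\left[\big(F^u_K(\widehat{T}_K)-F^u_K(s\wedge \widehat{T}_K)\big)^2\right]=\int_s^{+\infty}\Big(\E\big[\1_{\{t<\widehat{T}_K\}}\mathcal{L}_K(G^u)^2(\mathcal{N}_K(t))\big]-2\,\E\big[\1_{\{t<\widehat{T}_K\}}G^u(\mathcal{N}_K(s))\mathcal{L}_KG^u(\mathcal{N}_K(t))\big]\Big)\dx t
\]
is exactly your carr\'e-du-champ decomposition (its integrand equals $\Gamma_K(G^u)+2\big(G^u(t)-G^u(s)\big)\mathcal{L}_KG^u(t)$ in expectation), and the paper exploits precisely your key observation: the drift $\mathcal{L}_KG^u$ is paired with a factor bounded by $1$ rather than squared, and both $\abs{\mathcal{L}_KG^u}$ and $\abs{\mathcal{L}_K(G^u)^2}$ are bounded by $c/n$ using the expansion $\frac{i-1}{n+i-1}=\frac{i-1}{n}-\frac{(i-1)^2}{n(n+i-1)}$, assumption \eqref{eq:marta_ass_on_means}, the uniformly bounded second moments, and $n\leq K$ before $\tau_K$. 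Your exact drift formula $(D^{u,+}_K-F^u_KN^+_K)\sum_i(\mu^+_K(i)-\mu^-_K(i))\frac{i-1}{N_K+i-1}$ coincides with the paper's term $C_g\sum_{i\geq 0}(i-1)(\mu^+_K(i)-\mu^-_K(i))\frac{n}{n+i-1}$, since $C_g=(D^{u,+}_K-F^u_KN^+_K)/n$.

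Two blemishes, neither of which breaks the argument. First, your enumeration of jumps omits the event in which $u$ itself is born, i.e.\ a reproduction of its parent $\bar{u}$ while $D^u_K=0$: there $F^u_K$ jumps from $0$ to $1/(n+i-1)$ at a rate controlled by the total mass of $\mu^\pm_K$, so your claimed identity for $\mathcal{L}_KF^u_K$ fails on $\{D^u_K=0,\ \bar{u}\text{ alive}\}$; the paper carries the corresponding extra term $\frac{1}{n+1}\sum_{i\geq 1}\big(\mu^+_K(i)\1_{\{\bar{u}\in\mathcal{N}^+\}}+\mu^-_K(i)\1_{\{\bar{u}\in\mathcal{N}^-\}}\big)$ in both generator expressions, and by \eqref{eq:convergence-measure} it is again $O(1/n)$, so your final bound survives with a larger constant. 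Second, only the \emph{logistic} death contributions cancel exactly between descendants and non-descendants; the natural death rates $\mu^\pm_K(0)$ are type-dependent and do not cancel --- they survive as the $i=0$ term of your sum, which is consistent with your later identification of the leading term as $(\mathfrak{m}^+_K-\mathfrak{m}^-_K)/N_K$ (the means include $i=0$), but makes the phrase ``the death contributions cancel'' inaccurate as stated.
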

\begin{proof}
The function $G^u:\mathfrak{T}\times\mathfrak{T} \rightarrow [0,1]$ given by $G^u(\emptyset) := 0$ and
\[
G^u(\mathcal{N}) := \dfrac{\abs{\{v\in \mathcal{N}\;:\; u\prec v\}} }{\abs{\mathcal{N}}}\qquad \text{ for }\mathcal{N}\neq \emptyset
\]
is bounded and satisfies $F_K^u(t) = G^u(\mathcal{N}_K(t))$. Writing $\mathcal{L}_K$ for the infinitesimal generator of $\mathcal{N}_K$, this yields

\begin{equation*}
    \begin{split}
        &\E\left[\left(F^u_K(\widehat{T}_K)-F^u_K(s\wedge \widehat{T}_K)\right)^2\right]\\&= \int_s^{+\infty} \left(\E\left[\1_{\{t<\widehat{T}_K\}} \mathcal{L}_K(G^u)^2(\mathcal{N}_K(t)) \right]-2\E\left[\1_{\{t<\widehat{T}_K\}} G^u(\mathcal{N}_K(s))\mathcal{L}_K G^u(\mathcal{N}_K(t)) \right]\right) \dx t.
    \end{split}
\end{equation*}
Since $0\leq G^u(\mathcal{N}_K(s))\leq 1$, it is sufficient to show that both $\abs{\mathcal{L}_KG^u(\mathcal{N})}$ and $\abs{\mathcal{L}_K(G^u)^2(\mathcal{N})}$ can be bounded by a constant times $\1_{\{\abs{\mathcal{N}}\neq 0\}}/\abs{\mathcal{N}}$. We use the following notation: we write $\bar{u}$ for the parent of $u$, $n:=\abs{\mathcal{N}}$, $n^\pm:=\abs{\mathcal{N}^\pm}$, $g := G^u(\mathcal{N})$ and
\begin{equation*}
    g^+:=\frac{\abs{\{v\in\mathcal{N}^+\,:\, u\prec v\}}}{\abs{\mathcal{N}^+}} \quad \quad g^-:=\frac{\abs{\{v\in\mathcal{N}^-\,:\, u\prec v\}}}{\abs{\mathcal{N}^-}}.
\end{equation*}
As before, if $n=0$, $n^+=0$ or $n^-=0$, we set  $g=0$, $g^+=0$ or $g^-=0$, respectively.  
The generator $\mathcal{L}_K$ applied to $G^u$ is given by
\begin{equation*}
\begin{split}
\mathcal{L}_K G^u(\cdot)(\mathcal{N}) &= \frac{1}{n + 1}\sum_{i \geq 1}\left(\mu_K^+(i)\1_{\{\bar{u}\in \mathcal{N}^+\}}+\mu_K^-(i)\1_{\{\bar{u}\in \mathcal{N}^-\}}\right)\\
&\;\quad+ \sum_{i \geq 1}\mu_K^+(i) n^+ g^+\left(\frac{ng+i-1}{n+i-1}-g\right) + \sum_{i \geq 1}\mu_K^+(i) n^+ (1-g^+)\left(\frac{ng}{n+i-1}-g\right)\\
&\;\quad+\sum_{i \geq 1}\mu_K^-(i) n^- g^-\left(\frac{ng+i-1}{n+i-1}-g\right) + \sum_{i \geq 1}\mu_K^-(i) n^- (1-g^-)\left(\frac{ng}{n+i-1}-g\right)\\
&\;\quad + \left(\mu^+_K(0) + n\frac{\mathfrak{m}}{K}\right)\left(n^+g^+\left(\frac{ng-1}{n-1}-g\right) + n^+(1-g^+)\left(\frac{ng}{n-1}-g\right)\right)\1_{\{n > 1\}}\\
%& + \left(\mu^+_K(0) + n\frac{\mathfrak{m}}{K}\right)n^+(1-g^+)\left(\frac{ng}{n-1}-g\right)\1\{n > 1\}\\
&\;\quad + \left(\mu^-_K(0) + n\frac{\mathfrak{m}}{K}\right)\left(n^-g^-\left(\frac{ng-1}{n-1}-g\right)n^-(1-g^-)\left(\frac{ng}{n-1}-g\right)\right)\1_{\{n > 1\}}\\
%& + \left(\mu^-_K(0) + n\frac{\mathfrak{m}}{K}\right)n^-(1-g^-)\left(\frac{ng}{n-1}-g\right)\1\{n > 1\}\\
&\;\quad+ \left(\left(\mu^+_K(0) + n\frac{\mathfrak{m}}{K}\right)n^+g^+\left( 0-g\right) + \left(\mu^-_K(0) + n\frac{\mathfrak{m}}{K}\right)n^-g^-\left(0-g\right)\right)\1_{\{n = 1\}}.
\end{split}
\end{equation*}
For $n=0$, the above expression vanishes; for $n\geq 1$ it can be rewritten as

\begin{equation*}
\begin{split}
\mathcal{L}_K G^u(\cdot)(\mathcal{N}) = &\frac{1}{n + 1}\sum_{i \geq 1}\left(\mu_K^+(i)\1_{\{\bar{u}\in \mathcal{N}^+\}}+\mu_K^-(i)\1_{\{\bar{u}\in \mathcal{N}^-\}}\right)\\
&+ \sum_{i \geq 1}(i-1)\mu_K^+(i) \frac{n}{n+i-1} \left( \frac{n^+ g^+}{n}(1-g) - \frac{n^+ (1-g^+)}{n}g\right)\\
&+ \sum_{i \geq 1}(i-1)\mu_K^-(i) \frac{n}{n+i-1} \left( \frac{n^- g^-}{n}(1-g) - \frac{n^- (1-g^-)}{n}g\right)\\
& + \left(\mu^+_K(0) + n\frac{\mathfrak{m}}{K}\right)\frac{n}{n-1}\left( \frac{n^+g^+}{n}(1-g) - \frac{n^+(1-g^+)}{n}g\right)\1_{\{n > 1\}}\\
& + \left(\mu^-_K(0) + n\frac{\mathfrak{m}}{K}\right)\frac{n}{n-1}\left( \frac{n^-g^-}{n}(1-g) - \frac{n^-(1-g^-)}{n}g\right)\1_{\{n > 1\}}\\
&+ \left(\left(\mu^+_K(0) + n\frac{\mathfrak{m}}{K}\right)n^+ g^+\left( 0-g\right) + \left(\mu^-_K(0) + n\frac{\mathfrak{m}}{K}\right)n^-g^-\left(0-g\right)\right)\1_{\{n  = 1\}}.
\end{split}
\end{equation*}
 For $n=1$, the expression is uniformly bounded in absolute value.
In particular, $$\mathcal{L}_KG^u(\mathcal{N})\mathds{1}_{n = 1} \leq c \frac{1}{n}\,\1_{\{n = 1\}},$$ for some constant $c>0$. 
For $n>1$, using the fact that $n^-g^-=ng-n^+g^+$, the expression simplifies to

\begin{equation*}
\begin{split}
  \mathcal{L}_K G^u(\cdot)(\mathcal{N}) =& \frac{1}{n+1}
  \sum_{i \geq 1}\left(\mu_K^+(i)\1_{\{\bar{u}\in \mathcal{N}^+\}}+\mu_K^-(i)\1_{\{\bar{u}\in \mathcal{N}^-\}}\right)\\
  &+C_g\left(\sum_{i \geq 0}(i-1)(\mu^+_K(i)-\mu^-_K(i))\frac{n}{n+i-1}\right),
\end{split}
\end{equation*}
where 
\begin{equation*}
    C_g:=\left(\frac{n^+g^+}{n}(1-g)-\frac{n^+(1-g^+)}{n}g\right)\in [-1,1].
\end{equation*}
Using the uniform convergence assumption \eqref{eq:convergence-measure}, the first sum is bounded by a constant times $1/n$. For the second sum, we note that 
\begin{equation*}
    \frac{n}{n+i-1}=1-\frac{i-1}{n+i-1},
\end{equation*}
so that
\begin{equation}\label{eq:generator-bound}
    \begin{split}
        &\left|\sum_{i \geq 0}(i-1)(\mu^+_K(i)-\mu^-_K(i))\frac{n}{n+i-1}\right|\\
        &\leq \left\vert \sum_{i \geq 0}(i-1)(\mu^+_K(i)-\mu^-_K(i))\right\vert +\sum_{i \geq 0}\frac{(i-1)^2}{n+i-1}\left(\mu_K^+(i)+\mu_K^-(i)\right)\\
        &\leq \frac{\abs{s^+-s^-}}{K}+\text{o}\left(\frac{1}{K}\right)+\frac{2(\mathfrak{v}^+ +\mathfrak{v}^- +\text{o}(1))}{n},
    \end{split}
\end{equation}
where we used that 
\begin{equation*}
    \frac{1}{n+i-1}\leq \frac{2}{n} \quad \text{for} \quad n\geq 2.
\end{equation*}

Since we are interested in the process up to the stopping time $\widehat{T}_K$, we have $n \leq K$. Hence the previous expression can be bounded by a constant times $1/n$.

Similarly, we have that, for $n\geq 2$,
\begin{equation}\label{eq:generator-second-moment}
    \begin{split}
        \mathcal{L}_K(G^u)^2(\mathcal{N})=&\left(\frac{1}{n+1}\right)^2\sum_{i \geq 1}\left(\mu_K^+(i)\1_{\{\bar{u}\in \mathcal{N}^+\}}+\mu_K^-(i)\1_{\{\bar{u}\in \mathcal{N}^-\}}\right)\\
        &+ g\,C_g\sum_{i \geq 0}(i-1)(\mu^+_K(i)-\mu^-_K(i)) \frac{n}{n+i-1}\left(2-\frac{i-1}{n+i-1}\right)\\
        &+n(1-g)\frac{n^+g^+}{n}\sum_{i \geq 0}\left(\frac{i-1}{n+i-1}\right)^2 (\mu^+_K(i)+\mu^-_K(i))\\
        &+\frac{n}{n-1}\frac{\mathfrak{m}}{K}g(1-g).
    \end{split}
\end{equation}
The first term can be bounded by a constant times $1/n^2\leq 1/n$. For the second term, we note that
\begin{equation*}
    \begin{split}
        &\left|\sum_{i \geq 0}(i-1)(\mu^+_K(i)-\mu^-_K(i)) \frac{n}{n+i-1}\left(2-\frac{i-1}{n+i-1}\right)\right|\\
        &\leq 2 \left|\sum_{i \geq 0}(i-1)(\mu^+_K(i)-\mu^-_K(i)) \frac{n}{n+i-1} \right|\\
        &\quad + \sum_{i\geq 0}(i-1)^2(\mu^+_K(i)+\mu^-_K(i))\frac{n}{(n+i-1)^2},
    \end{split}
\end{equation*}
which can be bounded in the same way as \eqref{eq:generator-bound}. The second to last term of \eqref{eq:generator-second-moment} can be bounded by a constant times $1/(n+i-1)\leq 2/n$ and for the last term we recall that we work on $\{n\leq K\}$.

For $n\leq 1$, one can get the correct bound as before.

 \end{proof}
 Recall that for $\beta\in (0,1)$
\[
T_K^\beta = \inf\{t\geq 0\;:\; N_K(t) \geq K - K^\beta\} \leq \tau_K,
\]
so that $\widehat{T}_K^\beta = T_K^\beta$ in the above notation.
\begin{lemma}\label{lem:Lemma 5.2}
    There exists a constant $b = b(\beta) > 0$ independent of $K$ such that
    \[
    \mathbb{E}\left[\dfrac{\mathds{1}_{N_K(t) > 0, t < {T}_K^\beta}}{N_K(t)}\right] \leq bt^{-\frac 1{1-\beta}},
    \]
    for all $t > 0$.
\end{lemma}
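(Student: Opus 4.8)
The plan is to apply Dynkin's formula to the test function $h(n) := 1/n$ (with $h(0) := 0$) for the process killed at $T_K^\beta$, and to reduce the statement to a scalar differential inequality. Writing $u(t) := \E\big[\1_{\{N_K(t) > 0,\, t < T_K^\beta\}} h(N_K(t))\big]$, the Dynkin formula for the killed process yields $u'(t) = \E\big[\1_{\{t < T_K^\beta\}}\mathcal{L}_K h(N_K(t))\big]$, where the killing at the boundary $\{N_K \geq K - K^\beta\}$ only improves the bound since $h \geq 0$ there. The heart of the argument is then the pointwise estimate $\mathcal{L}_K h(n) \leq -C\, h(n)^{1+\alpha}$ for a constant $C>0$ and $\alpha := 1 - \beta$, valid in the superlinear regime $n \leq K - K^\beta$.

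To obtain this estimate, I would compute $\mathcal{L}_K h$ on $\{N_K = n\}$ from the rates \eqref{eq:pop-size-rates}. Grouping the reproduction events of both types and the competitive deaths, a short calculation gives
\[
\mathcal{L}_K h(n) = -\frac{1}{n^2}\big(n^+\mathfrak{m}_K^+ + n^-\mathfrak{m}_K^-\big) + \frac{1}{n^2}\sum_{i\geq 0}\big(n^+\mu_K^+(i) + n^-\mu_K^-(i)\big)\frac{(i-1)^2}{n+i-1} + \frac{n}{n-1}\frac{\mathfrak{m}}{K},
\]
where $n^\pm = N_K^\pm$. Using \eqref{eq:marta_ass_on_means} to replace $n^+\mathfrak{m}_K^+ + n^-\mathfrak{m}_K^-$ by $n\mathfrak{m} + \mathrm{O}(n/K)$, and \eqref{eq:convergence-measure} together with the finite-variance assumption to control the middle (variance) sum, one isolates the leading behaviour
\[
\mathcal{L}_K h(n) \leq -\frac{\mathfrak{m}}{n}\Big(1 - \frac{n}{K}\Big) + \frac{C'}{n^2},
\]
for $n \geq 2$, the error collecting the competition correction, the selection corrections of order $1/(Kn)$, and the variance term, all of which are $\mathrm{O}(1/n^2)$ once $n \leq K$. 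The superlinearity inequality \eqref{eq:marta_superlinear_population}, i.e.\ $1 - n/K \geq c\, n^{-\alpha}$ for $1 \leq n \leq K - K^\beta$, turns the leading term into $-\mathfrak{m} c\, h(n)^{1+\alpha}$; since $1 + \alpha < 2$, the error $C'/n^2 = C'\, h(n)^2$ is dominated by half of this as soon as $n \geq n_0$ for a fixed $n_0$, giving $\mathcal{L}_K h(n) \leq -C\, h(n)^{1+\alpha}$ there.

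It remains to transfer the pointwise bound to $u$ and to solve the resulting inequality. Combining the generator bound with the sub-probability Jensen inequality $\E\big[\1_{\{t<T_K^\beta\}} h^{1+\alpha}\big] \geq u(t)^{1+\alpha}$ yields $u'(t) \leq -C\, u(t)^{1+\alpha}$ up to the small-$n$ contribution. The crucial point is that the function $\phi(t) := (C\alpha t)^{-1/\alpha}$ solves $\phi' = -C\phi^{1+\alpha}$ with $\phi(0^+) = +\infty$, so a comparison argument gives $u(t) \leq \phi(t)$ \emph{independently of the initial value} $u(0) \leq 1$; recalling $\alpha = 1 - \beta$ this is exactly $u(t) \leq b\, t^{-1/(1-\beta)}$, which is the claim.

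I expect the main obstacle to be the clean handling of the small-$n$ regime $2 \leq n < n_0$, where the variance term $C'/n^2$ can overcome the drift and the pointwise bound may genuinely fail. I would deal with this by splitting $\E[\1\, h^2]$ over $\{n \geq n_0\}$ and $\{n < n_0\}$: on the former, $h^2 \leq n_0^{-(1-\alpha)} h^{1+\alpha}$ lets one absorb the error into the main term, while on the latter one is left with $C'\, \P(t < T_K^\beta,\, 1 \leq N_K(t) < n_0)$, which decays faster than any power of $t$ because a supercritical population is, at large times, either extinct or large; this faster-than-polynomial source does not degrade the $t^{-1/(1-\beta)}$ rate. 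Alternatively, since $u \leq 1$ the target bound is trivial for $t$ below a fixed $t_0$, so only large $t$—where $u$ is already small and the population large—matters. The remaining verifications, namely the explicit generator identity above and the variance and competition estimates, are routine given the finite-variance assumption.
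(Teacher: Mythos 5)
Your proposal is, in substance, the same argument as the paper's: the paper applies the generator to the truncated function $I(\mathcal{N}) = \mathds{1}_{\{\vert\mathcal{N}\vert\in M_K\}}/\vert\mathcal{N}\vert$ (the analogue of your killed $h(n)=1/n$), splits the weighted drift $\sum_i \bar\mu_K^{n^+,n^-}(i)\tfrac{i-1}{n+i-1}$ into the true mean plus an error of order $n^{-2}$ controlled by the second moment, feeds in the superlinearity bound $\mathfrak{m}(1-n/K)\geq c\,n^{\beta-1}$ from \eqref{eq:marta_superlinear_population}, and closes with exactly your two final steps: Jensen in the form $\E[Y^{2-\beta}]\geq \E[Y]^{2-\beta}$ (note $2-\beta = 1+\alpha$) and the ODE comparison $u'\leq -\gamma u^{2-\beta}$, whose solution $\big(u(0)^{-(1-\beta)}+(1-\beta)\gamma t\big)^{-1/(1-\beta)}\leq b\,t^{-1/(1-\beta)}$ is independent of the initial datum. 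Your generator identity and error bookkeeping (selection terms of order $1/(nK)$, variance term and competition correction of order $n^{-2}$ once $n\leq K$) match the paper's computation line by line.

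The one genuine point of divergence is the bounded-$n$ regime, and there your diagnosis is actually more careful than the paper's, but your patch is incomplete. The paper dismisses this regime by asserting that ``the quantity under consideration is positive and the $n^{-2}$-correction is of lower order''; as you suspected, this pointwise claim can fail. For instance, take $\mu_K^\pm(0)=\mu_K^\pm(3)=1$ (so $\mathfrak{m}=1$, $\mathfrak{v}^\pm=5$): at $n=2$ one computes $\sum_i\bar\mu_K^{n^+,n^-}(i)\tfrac{i-1}{n(n+i-1)}\approx -\tfrac12<0$, because a death doubles $1/N_K$ and this beats the gain from reproduction, so in fact $\mathcal{L}_K I>0$ there and no constant $\gamma>0$ can make the paper's displayed inequality true. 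So the small-$n$ issue is a real hole in the pointwise bound, not a technicality, and it is shared by the paper's own proof. Your proposed repair --- bounding the leftover by $C'\,\P\big(t<T_K^\beta,\ 1\leq N_K(t)<n_0\big)$ and invoking faster-than-polynomial decay --- is the right kind of idea, but as written it is an unproven, non-routine claim: the decay must hold \emph{uniformly in $K$} (plausible, e.g.\ via the coupling with the independent supercritical branching process of \Cref{cor:cheeky_zeta_K_min_T_K}, for which the probability of surviving yet remaining in a bounded set decays geometrically in time), and you must then rerun the comparison argument for the perturbed inequality $u'\leq -Cu^{1+\alpha}+\epsilon(t)$, which no longer integrates explicitly. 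In short: your main line is correct and identical to the paper's; the small-$n$ step is a genuine gap in your write-up, albeit one the paper's proof does not close either.
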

\begin{proof}
First, define
\[
    \bar\mu_K^{n^+, n^-}(i) := \begin{cases}
        n^+\mu_K^+(i) + n^-\mu_K^-(i) & \text{ if } i\geq 1,\\
        n^+\mu_K^+(0) + n^-\mu_K^-(0) + n^2\dfrac{\mathfrak{m}}{K}& \text{ if } i= 0,
    \end{cases}
\]
so that the overall growth rate of a population $(n^+, n^-)$ is given by
\begin{align*}
    \sum_{i\geq 0} (i-1)\bar\mu_K^{n^+, n^-}(i) &= n\mathfrak{m} + \dfrac{n^+\mathfrak{s}^+}{K} + \dfrac{n^-\mathfrak{s}^-}{K} + n\,\text{o}\left(\dfrac{1}{K}\right) - n^2\dfrac{\mathfrak{m}}{K}\\
    &= n\left( \mathfrak{m}\left(1 - \frac{n}{K}\right) + \mathcal{O}\left(\frac{1}{K}\right)\right).
\end{align*}
    Set $M_K := \{1, 2, \dots, K-K^\beta\}$.
    From \eqref{eq:marta_superlinear_population} and the following discussion, we have
    \[
\mathfrak{m}\left(1 - \dfrac{n}{K}\right) \geq \dfrac{c}{n^{1-\beta}},
    \]
    for all $n\in M_K$, for some constant $c$ independent of $n$ and $K$.
    To include the $\mathcal{O}$-term, it suffices to note that $n\in M_K$ implies $n\leq K$, so that $\frac{1}{K}$ is of lower order.
    
    Since $I:\mathcal{N}\mapsto \frac{\mathds{1}_{\vert \mathcal{N}\vert \in M_K}}{\vert \mathcal{N}\vert}$ is bounded, we may conclude that
    \[
        \partial_t\mathbb{E}\left[\dfrac{\mathds{1}_{N_K(t) > 0, t < {T}_K^\beta}}{N_K(t)}\right]= \partial_t\mathbb{E}\left[ I(\mathcal{N}_K(t\wedge T_K^\beta)) \right]
        = \mathbb{E}\left[ \mathds{1}_{t<T_K^\beta}\mathcal{L}_K I(\mathcal{N}_K(t))\right].
    \]
    Now, for $n = \vert\mathcal{N}\vert \in M_K \setminus \{1\}$, we have
    %\musxhu{Isn't there a problem below when $n = 1$ and $i=-1$? Probably need a case distinction.}
    \begin{align*}
        \mathcal{L}_K I(\mathcal{N})=& \sum_{i\geq 0} \bar\mu_K^{n^+, n^-}(i)\left(\dfrac{\mathds{1}_{n+i-1\in M_K}}{n + i - 1} - \dfrac{1}{n}\right) \\
        =& -\dfrac{1}{n}\sum_{i\geq 0} \bar\mu_K^{n^+, n^-}(i)\left(\dfrac{(i-1)\mathds{1}_{n+i-1\in M_K}}{n + i - 1} + \mathds{1}_{n + i -1 \not\in M_K}\right)\\
        =& -\dfrac{1}{n}\sum_{i\geq 0} \bar\mu_K^{n^+,n^-}(i)\left( \dfrac{i-1}{n+i-1} + \dfrac{n}{n+i-1}\mathds{1}_{n+i-1\not\in M_K}\right)\\
        \leq& -\dfrac{1}{n}\sum_{i\geq 0} \bar\mu_K^{n^+, n^-}(i)\dfrac{i-1}{n+i-1},
    \end{align*}
    where we used the fact that $1=\mathds{1}_{n+i-1\in M_K}+\mathds{1}_{n+i-1\notin M_K}$
    Next, we note that, for $n\in M_K \setminus \{1\}$,
    
    \[
\begin{split}
    \dfrac{1}{n}\sum_{i\geq 0} \bar\mu_K^{n^+, n^-}\!\!(i) \left\vert\dfrac{i-1}{n+i-1} - \dfrac{i-1}{n}\right\vert &\leq \dfrac{1}{n}\sum_{i\geq 0} \bar\mu_K^{n^+, n^-}\!\!(i) \dfrac{(i-1)^2}{n(n+i-1)}\\
    &\leq  \dfrac{1}{n^3}\sum_{i\geq 0} (i-1)^2\bar\mu_K^{n^+, n^-}\!\!(i)\\
    &\leq  \dfrac{1}{n^2}\sum_{i\geq 0} (i-1)^2 \big(\mu_K^+(i) + \mu_K^-(i)\big) + \dfrac{\mathfrak{m}}{nK},
\end{split}
    \]
    which is uniformly bounded by a constant times $n^{-2}$.
    This allows us to write
    \[
\begin{split}
    &\sum_{i\geq 0} \bar\mu_K^{n^+, n^-}\!\!(i)\dfrac{i-1}{n(n+i-1)}\\
    \geq& \left(\dfrac{1}{n^2}\sum_{i\geq 0} (i-1)\bar\mu_K^{n^+, n^-}\!\!(i) - dn^{-2}\right) \\
    \geq & \left(cn^{\beta - 2} -  dn^{-2}\right),
\end{split}
    \]
    for some constants $c,d > 0$ independent of $K$.
    As before, since the quantity under consideration is positive and the $n^{-2}$-correction is of lower order, there exists some constant $\gamma > 0$ such that
    \[
\sum_{i\geq 0} \bar\mu_K^{n^+,n^-}(i)\dfrac{i-1}{n(n+i-1)}\mathds{1}_{n\in M_K} \geq \gamma n^{\beta-2}.
    \]
    We finally notice that the same inequality holds for $n=1$.
    Plugging this back into the generator expression, we obtain
    \[
    \begin{split}
\partial_t\mathbb{E}\left[ \dfrac{\mathds{1}_{N_K(t) > 0, t < T_K^\beta}}{N_K(t)}\right] &\leq - \gamma\mathbb{E}\left[\dfrac{\mathds{1}_{N_K(t) > 0, t < T_K^\beta}}{N_K(t)^{2-\beta}}\right] \\
&\leq -\gamma\mathbb{E}\left[\dfrac{\mathds{1}_{N_K(t) > 0, t < T_K^\beta}}{N_K(t)}\right]^{2-\beta},
\end{split}
    \]
    so that
    \[
\mathbb{E}\left[ \dfrac{\mathds{1}_{N_K(t) > 0, t < T_K^\beta}}{N_K(t)}\right] \leq \left( \mathbb{E}\left[\dfrac{N_K(0)\in M_K}{N_K(0)}\right]^{-(1-\beta)} + (1-\beta)\gamma t\right)^{-\frac{1}{1-\beta}}.
    \]
\end{proof}

We are now ready to prove the main result of the Section.

\begin{proof}[Proof of \Cref{thm:application_of_comparison}]
The previous two results yield
\[
\begin{split}
    \mathbb{E}\left[ \left(F_K^g(T_K^\beta) - F_K^g(\zeta_K\wedge T_K^\beta)\right)^2\right] &\leq c\int_{\zeta_K}^{+\infty} \mathbb{E}\left[\dfrac{\mathds{1}_{N_K(t) > 0, t < T_K^\beta}}{N_K(t)}\right]\dx t\\
    &\leq c\int_{\zeta_K}^{+\infty} bt^{-\frac{1}{1-\beta}}\dx t,
\end{split}
\]
which goes to zero as $\zeta_K \to +\infty$ with $K\to +\infty$.
The result then follows from \Cref{cor:cheeky_zeta_K_min_T_K}.
\end{proof}

\section{Interlude: the Katzenberger method}\label{sec:Katzenberger-method}

To study the frequency process at carrying capacity and prove \Cref{theo:convergence_at_carrying_capacity}, we will make heavy use of the little known results from \cite{Kat91} on solutions to SDEs forced onto a manifold by a large drift.
For the convenience of the reader, in Section \ref{sec:Katzenberger}, we give an overview of the main result from \cite{Kat91}.
In Section \ref{sec:Kat-Markov}, we deduce a simpler result in the context of Markov processes which we then use in the proof of \Cref{theo:convergence_at_carrying_capacity}.
For more details on the motivation for the Katzenberger method, see \cite[Appendix B]{Kern25}.

Please note that the notation in this section follows \cite{Kat91} and does not reflect the exact notation of the remainder of this paper.
In particular, we will use the following.
We write $V\subset\!\subset U$ whenever $V$ is a \emph{compact} subset of $V$, and $\mathring{V}$ for the interior of $V$.
For a differentiable $F:U\rightarrow \mathbb R^d$, we write $\partial F(x)\in \mathbb R^{d\times d}$ for the Jacobian of $F$ at $x$ as well as $\partial_{ij} F$, $1\leq i,j\leq d$ for the second partial derivatives of $F$ whenever they exist.
We write $\mathbb{D}([0,+\infty); \mathbb{R}^d)$ for the space of càdlàg functions $[0,+\infty)\rightarrow\mathbb{R}^d$, endowed with the usual Skorokhod topology.
If $X$ is a càdlàg process, $X(t-)$ denotes the left limit of $X$ in $t$ and $\Delta X(t) := X(t) - X(t-)$ denotes the jump of $X$ at $t$.
If $X$ and $Y$ are semimartingales, we write $[X,Y]$ (and $\langle X, Y\rangle$) for the (predictable) quadratic covariation.
Finally, for any sequence of random variables, we write $X_n \Rightarrow X$ to mean that $(X_n)_{n\in \N}$ converges in distribution to $X$, as $n\to \infty$.

\subsection{SDEs forced onto a manifold}\label{sec:Katzenberger}

Let $U\subseteq \mathbb R^d$ be an open set, $F:U\rightarrow\mathbb R^d$ a continuous vector field, and define
\[
\Gamma := F^{-1}(0) = \{ x\in U\;:\; F(x) = 0\}.
\]
For every $n\in\mathbb N$, let $(\Omega^n, \mathcal{F}^n, (\mathcal{F}_t^n)_{t\geq 0}, \mathbb P^n)$ be a filtered probability space, let $Z_n$ be an $\mathbb R^m$-valued càdlàg $(\mathcal{F}_t^n)_{t\geq 0}$-semimartingale with $Z_n(0) = 0$, and let $A_n$ be a real-valued càdlàg $(\mathcal{F}_t^n)_{t\geq 0}$-adapted non-decreasing process with $A_n(0) = 0$.
Let $\sigma:U\rightarrow \mathbb R^{d\times m}$ be continuous, and $\sigma_n: U\rightarrow \mathbb R^{d\times m}$ be continuous and such that $\sigma_n\to \sigma$ uniformly on compact subsets of $U$.
Suppose that $X_n$ is an $\mathbb R^d$-valued càdlàg $(\mathcal{F}_t^n)_{t\geq 0}$-semimartingale that satisfies
\begin{equation}\label{eq:X_n-Katz}
X_n(t) = X_n(0) + \int_0^t \sigma_n\big(X_n(s-)\big)\dx Z_n(s) + \int_0^t F\big(X_n(s-)\big)\dx A_n(s)
\end{equation}
for all $t\leq \tau_n(V)$ and all compacts $V\subset\!\subset U$, where
\[
\tau_n(V) := \inf\{t\geq 0\;:\; X_n(t-)\not\in \mathring{V}\text{ or }X_n(t)\not\in\mathring{V}\}.
\]
We additionally assume the following asymptotic continuity
\begin{equation}\label{eq:katzenberger_asymptotic_cont}
\sup_{0\leq t\leq T\wedge\tau_n(V)} \Delta A_n(t) + \vert \Delta Z_n(t)\vert \Rightarrow 0,
\end{equation}
with $n\to\infty$ for every $T > 0$ and every compact $V\subset\!\subset U$.

We write
\[
D(0) := \{ z\in \mathbb C\;:\; \Re(z) < 0\}
\]
for the left half of the complex plane.
Furthermore, define the flow
\[
\psi(x,t) = x + \int_0^t F\big(\psi(x,s)\big)\dx s
\]
and let
\[
U_\Gamma := \left\{ x\in U\;:\; \lim_{t\to+\infty} \psi(x,t)\text{ exists and is in }\Gamma\right\},
\]
denote the domain of attraction of $\Gamma$ under $\psi$.
For $x\in U_\Gamma$, we set 
\[
\Phi(x) := \lim_{t\to+\infty} \psi(x,t).
\]

\begin{assumption}[Regularity of the deterministic system]\label{ass:katz_regularity_deter}
Assume that $F$ is twice continuously differentiable and $\Gamma$ is a $\mathcal{C}^2$-manifold of $U$.
If $p$ is the dimension of the manifold $\Gamma$, then, for every $y\in \Gamma$, the matrix $\partial F(y)$ has $d-p$ eigenvalues in $D(0)$.
Finally, assume that $\Phi$ is twice continuously differentiable.
\end{assumption}

Note that the final assumption on $\Phi$ is implied by the other assumption together with the additional assumption that the second derivative of $F$ is Lipschitz continuous, see \cite[Corollary 3.6]{Kat91}.

\begin{assumption}[Infinite drift]\label{ass:katz_infinite drift}
For all $\epsilon,T > 0$ and $V\subset\!\subset U$, we have
\[
\inf_{0\leq t\leq T\wedge\tau_n(V)-\epsilon} \Big( A_n(t+\epsilon) - A_n(t)\Big) \underset{n\to \infty
}{\Longrightarrow} +\infty,
\]
where the infimum over the empty set is defined to be $+\infty$.
\end{assumption}

For $\delta\in (0,+\infty]$, define
\[
h_\delta(r) := \begin{cases}
    0 & \text{ if } r\leq \delta,\\
    1 - \delta/r & \text{ if } r\geq \delta,
\end{cases}
\]
as well as the map $J_\delta : \mathbb D([0,+\infty); \mathbb R^m) \rightarrow \mathbb D([0,+\infty);\mathbb R^m)$ through
\[
J_\delta(g)(t) := \sum_{0\leq s\leq t} h_\delta\big(\vert \Delta g(s)\vert\big) \Delta g(s).
\]
The function $g\mapsto g - J_\delta(g)$ simply truncates the jumps of $g$ at $\delta$. Here and in the following, $\vert x\vert$ denotes the standard euclidean norm.

\begin{assumption}[Regularity of the driving noise]\label{ass:katz_regularity_noise}
    For any $V\subset\!\subset U$, set $Y_n := Z_n^{\tau_n(V)}$. 
    Assume that there exists some $\delta\in(0,+\infty]$ and a sequence of stopping times $(\widehat{\tau}_n^k)_{k\in\mathbb N}$ for every $n\in\mathbb N$ as well as a decomposition of $Y_n - J_\delta(Y_n)$ into a local martingale $M_n$ plus a process $F_n$ of finite variation such that $\mathbb P^n(\widehat{\tau}_n^k \leq k)\leq 1/k$,
    \[
        \Big([M_n]\left(t\wedge\widehat{\tau}_n^k\right) + T_{t\wedge\widehat{\tau}_n^k}(F_n)\Big)_{n\in\mathbb N}
    \]
    is uniformly integrable for every $t\geq 0$ and $k\in\mathbb N$, and 
    \[
    \lim_{\gamma\downarrow 0}\limsup_{n\to\infty} \mathbb P^n\left( \sup_{0\leq t\leq T} \Big(T_{t+\gamma}(F_n) - T_t(F_n)\Big)> \epsilon\right) = 0
    \]
    for all $\epsilon,T > 0$.
    Here, $T_t(F_n)$ denotes the total variation of $F_n$ over the time interval $[0,t]$.
\end{assumption}

Whereas Assumptions \ref{ass:katz_regularity_deter} and \ref{ass:katz_infinite drift} can be linked directly to the behaviour of the deterministic system, \Cref{ass:katz_regularity_noise} is used to ensure that the stochastic integrals are well-behaved.
Indeed, if we take $Y_n$ to satisfy the above (cf.~\cite[Condition 4.2]{Kat91}) and also assume it to be relatively compact, then $\left(Y_n, \int H_n\dx Y_n\right)_{n\in\mathbb N}$ is relatively compact for a wide range of adapted processes $H_n$, see \cite[Proposition 4.4]{Kat91}.
See also \cite{kurtz1991protter} for more criteria on weak convergence of stochastic integrals.

\begin{theorem_internal}[{\cite[Theorem 6.2]{Kat91}}]\label{theo:katz_convergence}
Let $(X_n)_{n \in \N}$ be a sequence of processes of the form \eqref{eq:X_n-Katz} satisfying the asymptotic continuity \eqref{eq:katzenberger_asymptotic_cont} as well as $X_n(0) \Rightarrow X_n(0)\in\Gamma$.
Under Assumptions 1--3, the following holds true for every $V\subset\!\subset U$:
\begin{enumerate}
    \item The sequence $\left(X_n^{\tau_n(V)}, Z_n^{\tau_n(V)}, \tau_n(V)\right)_{n\in\mathbb N}$ is relatively compact in the natural product space ${\mathbb D([0,+\infty); \mathbb R^d)\times \mathbb{D}([0,+\infty);\mathbb R^m)\times [0,+\infty]}$.
    \item For any limit point $(X, Z,\tau)$ of the above sequence, we have that 
    \begin{enumerate}
        \item $(X,Z)$ is a continuous semimartingale,
        \item $X(t) \in\Gamma$ for every $t\geq 0$ a.s.,
        \item $\tau \geq \inf\{t \geq 0\;:\; X(t) \not\in \mathring{V}\}$ {a.s.},
        \item one has
        \[
        \begin{split}
            X(t) &= X(0) + \int_0^{t\wedge\tau} \partial\Phi \cdot \sigma\big( X(s)\big)\dx Z(s) + \dfrac{1}{2}\sum_{i,j=1}^d\sum_{k,\ell=1}^m \int_0^{t\wedge\tau} \partial_{ij}\Phi \sigma^{ik}\sigma^{j\ell} \big(X(s)\big) \dx[Z^k, Z^\ell](s).
        \end{split}
        \]
    \end{enumerate}
\end{enumerate}
\end{theorem_internal}

For an extension of \Cref{theo:katz_convergence} to the case where the initial condition converges to a point that lies in the domain of attraction of $\Gamma$, see \cite[Theorem 6.3]{Kat91}.

\subsection{The Katzenberger method}\label{sec:Kat-Markov}

We now derive a simpler version of \Cref{theo:katz_convergence} in the context of Markov processes forced onto a manifold.
Since, for the proof of \Cref{theo:convergence_at_carrying_capacity}, we are interested in Markov processes that behave like diffusions on manifolds, the main result of this section should be viewed as a generalisation of the results in \cite{funaki1993drift} which is also a special case of \cite{Kat91}.

We use the notation from the previous section. In the following, suppose that, for all $n\in \N$, $X_n$ is a Markov process stopped upon leaving $U$ with generator
\begin{equation}\label{eq:katzenberger_approximate_generator_decomposition}
\mathcal{C}_n f(x) = \Big(B_n f(x) + \kappa_n F\cdot \nabla f(x)\Big)\mathds{1}_{x\in U},
\end{equation}
where $\kappa_n \to +\infty$ and the $B_n$ are operators with common domain $\mathcal{D}$ containing the projections 
\[
I_i:  U\longrightarrow \mathbb R,\quad x\mapsto x_i,
\]
for all $1\leq i\leq d$ as well as products of the form $I_iI_j$, $1\leq i,j\leq d$.
Finally, assume that $B_n f \to Bf$ uniformly on compacts $V\subset\!\subset U$ for all $f$ of the above form, where
\[
Bf = b\cdot \nabla f + \dfrac{1}{2}\sum_{i,j=1}^d c_{ij}\partial_{ij}f,
\]
is the generator of a diffusion with $b:U\to \mathbb R^d$ and $c:U\to \mathbb R^{d\times d}$ measurable and locally bounded.
In this context, it is natural to assume that
\begin{equation}\label{eq:katz_asymp_cont_markov}
\sup_{0\leq t\leq T\wedge\tau_n(V)} \vert \Delta X_n(t)\vert \underset{n\to\infty}{\Longrightarrow} 0,
\end{equation}
for all $T\geq 0$ and $V\subset\!\subset U$.

From $I_i\in \mathcal{D}$ for $1\leq i\leq d$, we conclude that $X_n$ is a semimartingale.
Setting $A_n(t) := \kappa_n t$ and $Z_n(t) := X_n(t) - X_n(0) - \int_0^t F(X_n(s))\dx A_n(s)$, we obtain the representation
\[
X_n(t) = X_n(0) + Z_n(t) + \int_0^t F\big(X_n(s)\big)\dx A_n(s).
\]
We assume \Cref{ass:katz_regularity_deter} to hold. \Cref{ass:katz_infinite drift} is verified by definition of $A_n$.
Furthermore, the asymptotic continuity is clear for $A_n$ and follows for $Z_n$ from \eqref{eq:katz_asymp_cont_markov}.
We therefore concentrate on \Cref{ass:katz_regularity_noise}.
Write $I:x\mapsto x$ for the identity on $\mathbb R^d$.
By Dynkin's formula for $X_n$, we may decompose $Z_n$ into the martingale
\[
M_n(t) = Z_n(t) - \int_0^t B_n I\big(X_n(s)\big)\dx s,
\]
plus the process of finite variation
\[
F_n(t) = \int_0^t B_n I\big(X_n(s)\big)\dx s.
\]
Note that the corresponding total variation process is given by
\[
T_t(F_n) = \int_0^t \Vert B_n I\big(X_n(s)\big)\Vert_1\dx s.
\]
In particular, since
\[
B_n I \underset{n\to\infty}{\longrightarrow} BI = b,
\]
uniformly on compacts, and $b$ is bounded on compacts, we may conclude that $t\mapsto T_{t\wedge\tau_n(V)}(F_n)$ is asymptotically continuous and that $\left(T_{t\wedge\tau_n(V)}(F_n)\right)_{n\in\N}$ is uniformly integrable for every $t\geq 0$.
In particular, $Z_n(t\wedge\tau_n(V))$ satisfies \Cref{ass:katz_regularity_noise} as soon as
\[
[M_n](t\wedge\tau_n(V)) = [Z_n](t\wedge\tau_n(V)) = [X_n](t\wedge\tau_n(V))
\]
is uniformly integrable in $n$.
We will assume this to hold for every $V\subset\!\subset U$ for the moment.

Then, \Cref{theo:katz_convergence} yields for every $V\subset\!\subset U$ that the triple $\left(X_n^{\tau_n(V)}, Z_n^{\tau_n(V)}, \tau_n(V)\right)$ is relatively compact and that any limit point $(X,Z,\tau)$ satisfies the equation
\[
\begin{split}
X(t) &= X(0) + \int_0^{t\wedge\tau} \partial\Phi\big(X(s)\big)\dx Z(s) \\
&\hspace{1.25cm} + \dfrac{1}{2}\sum_{i,j=1}^d \int_0^{t\wedge\tau} \partial_{ij}\Phi\big(X(s)\big)\dx [Z^i, Z^j](s),
\end{split}
\]
where $\tau \geq \tau(V) := \inf\{t\geq 0\;:\; X(t)\not\in \mathring{V}\}$.
To identify the limit, we may note that, by the Continuous Mapping Theorem, $M^{\tau_n(V)}_n$ converges jointly with the rest to some process $M$.
Following \cite[Theorem 5.1]{yurachkivsky2011martingale}, we will assume that
\[
\mathbb E_n\left[ \sup_{0\leq t\leq T\wedge\tau_n(V)} \vert \Delta X_n(t)\vert^2\right] \underset{n\to\infty}{\longrightarrow} 0,
\]
to conclude that $M$ is a continuous local martingale and $\langle M^i_n, M_n^j\rangle \Rightarrow \langle M^i, M^j\rangle$ jointly for all indices and jointly with $(X,Z)$, so that
\[
Z(t) = M(t) + \int_0^t b\big(X(s)\big)\dx s,
\]

and
\[
\langle M^i, M^j\rangle(t) = \int_0^t c_{ij}\big(X(s)\big)\dx s,
\]
with $[Z^i, Z^j] = [M^i, M^j] = \langle M^i, M^j\rangle$ for all $1\leq i,j\leq d$.

In particular, 
\[
\begin{split}
    X(t) &= X(0) + \int_0^{t\wedge\tau} \partial\Phi\big(X(s)\big)\dx M(s)\\
    &\quad + \int_0^{t\wedge\tau} \!\!\left(\!\partial\Phi\big(X(s)\big)b\big(X(s)\big) + \dfrac{1}{2}\!\sum_{i,j=1}^d \partial_{ij}\Phi \big(X(s)\big)c_{ij}\big(X(s)\big)\!\right)\!\!\!\dx s.
\end{split}
\]

Finally, Itô's formula yields that $(X,\tau(V))$ is a solution to the stopped martingale problem for
\begin{equation}\label{eq:katzenberger_limiting_generator}
\begin{split}
    \mathcal{C} f(x) &= \left( \partial\Phi(x)b(x) + \dfrac{1}{2}\sum_{i,j=1}^d \partial_{ij}\Phi(x)c_{ij}(x)\right)^T\cdot\nabla f(x) \\
    &\qquad + \dfrac{1}{2}\sum_{i,j,k,\ell=1}^d \partial_k\Phi_i(x)\partial_\ell\Phi_j c_{k\ell}(x)\partial_{ij}f(x).
\end{split}
\end{equation}

\begin{theorem_internal}[Katzenberger method]\label{theo:katzenberger_method}
Let \Cref{ass:katz_regularity_deter} hold.
    Consider an operator $\mathcal{C}_n$ of the form \eqref{eq:katzenberger_approximate_generator_decomposition} and assume that $X_n$ is a solution to the martingale problem for $\mathcal{C}_n$ satisfying
    \[
    \mathbb E_n\left[ \sup_{0\leq t\leq T\wedge\tau_n(V)} \vert \Delta X_n(t)\vert^2\right] \underset{n\to\infty}{\longrightarrow} 0,
    \]
    for all $T\geq 0$ and $V\subset\!\subset U$. Assume additionally that $[X_n](t\wedge\tau_n(V))$ is uniformly integrable for all $V\subset\!\subset U$. 
    Then, the sequence $(X_n)_{n\in\N}$ is relatively compact.
    If $X$ is a limit point and $\tau(V) := \inf\{t\geq 0\;:\; X(t)\not\in \mathring{V}\}$, then $(X,\tau(V))$ is a solution to the stopped martingale problem for $\mathcal{C}$ in \eqref{eq:katzenberger_limiting_generator}.
\end{theorem_internal}

\section{The frequency process at carrying capacity}\label{sec:limit-carrying-capacity}

In this section, we prove the forward-in-time convergence of the frequency process (\Cref{theo:convergence_at_carrying_capacity}) using the Katzenberger method developed in the previous section.
We will assume throughout this section that there exists $\eta > 0$ such that 
\begin{equation}\label{eq:eta-assumption}
  \sum_{i\geq 0} i^{2+\eta} \mu_K^\pm(i) \text{  is uniformly bounded in $K$.}
\end{equation}

First, recall that
\[
X_K^\pm(t) := \dfrac{N_K^\pm(Kt)}{K}
\]
defines a Markov process $X_K = (X_K^+, X_K^-)$ with generator 
\begin{equation}\label{eq:marta_generator_of_X_K}
    \begin{split}
    &A_Kf(x)=\sum_{i\geq 0} K^2x^+\mu_K^+(i)\left(f\left(x+\frac{i-1}{K}\delta_+\right)-f(x)\right)\\&+\sum_{i\geq 0} K^2x^-\mu_K^-(i)\left(f\left(x+\frac{i-1}{K}\delta_-\right)-f(x)\right)+\mathfrak{m} K^2 x^+(x^++x^-)\left(f\left(x-\frac{1}{K}\delta_+\right)-f(x)\right)\\&+\mathfrak{m} K^2 x^-(x^++x^-)\left(f\left(x-\frac{1}{K}\delta_-\right)-f(x)\right)+\theta^+Kx^+\left(f\left(x-\frac{1}{K}\delta_++\frac{1}{K}\delta_-\right)-f(x)\right)\\&+\theta^-Kx^-\left(f\left(x-\frac{1}{K}\delta_-+\frac{1}{K}\delta_+\right)-f(x)\right),
        \end{split}
\end{equation}
where we write $x = (x^+, x^-)$ and we use the notation $\delta_+ = (1,0)$ and $\delta_- = (0,1)$.
In particular, we obtain for $f:\mathbb{R}^2\rightarrow\mathbb{R}$ bounded, thrice continuously differentiable with vanishing third derivative (i.e.~quadratic functions), that
\[
\begin{split}
    A_K f(x) =& \sum_{i\geq 0} x^+\mu_K^+(i)\left( K(i-1)\partial_+ f(x) + \dfrac{(i-1)^2}{2}\partial_{++}f(x)\right)\\
    & + \sum_{i\geq 0} x^-\mu_K^-(i)\left( K(i-1)\partial_- f(x) + \dfrac{(i-1)^2}{2}\partial_{--}f(x)\right)\\
    &+\mathfrak{m}x^+(x^+ + x^-)\left( -K\partial_+ f(x) + \frac{1}{2}\partial_{++}f(x)\right)\\
    &+\mathfrak{m}x^-(x^+ + x^-)\left( -K\partial_- f(x) + \frac{1}{2}\partial_{--}f(x) \right)\\
    &+ \big(\theta^+x^+ - \theta^- x^-\big)\left( \partial_- f(x) - \partial_+ f(x) + \mathcal{O}\left(\dfrac{1}{K}\right)\right)\\
    %&+ \theta^-x^-\left( \partial_+ f(x) - \partial_- f(x) + \mathcal{O}\left(\dfrac{1}{K}\right)\right)\\
    =& \left( \mathfrak{s}^+x^+ + \theta^-x^- - \theta^+x^+\right)\partial_+ f(x)+ \left(\mathfrak{s}^-x^- + \theta^+x^+ - \theta^-x^-\right)\partial_- f(x)\\
    & + \dfrac{1}2\Big(x^+\big(\mathfrak{v}^+ + \mathfrak{m}(x^+ + x^-)\big)\partial_{+}^2f(x) + x^-\big(\mathfrak{v}^- + \mathfrak{m}(x^+ + x^-)\big)\partial_{-}^2 f(x)\Big)\\
    & + K\mathfrak{m}(1-(x^+ + x^-))\big( x^+, \, x^-\big)\cdot \nabla f(x) + \mathcal{O}\left(\dfrac{1 + x^+ + x^-}{K}\right).
\end{split}
\]
To simplify the notation, we write $\mathfrak{s}\odot x$ for entry-wise multiplication, extending it to the symbolic multiplication $\nabla\odot\nabla$, $\langle\cdot,\cdot\rangle$ for the usual scalar product, as well as $\Vert\vert x\vert\Vert := x^+ + x^-$ and $\overset{\pm}{\theta} := \binom{+\theta^+}{-\theta^-}$.
This allows us to rewrite the generator as
\[
A_Kf(x) = \Big(B_K f(x) + K F(x)^T\cdot\nabla f(x)\Big)\mathds{1}_{x\in U},
\]
for $U := \{ x\in \mathbb{R}^2\;:\; \Vert\vert x\vert\Vert > 0\}$, with an operator $B_K$ that satisfies, with $f$ as above, the convergence $B_K f \to Bf$ locally uniformly to the diffusion generator
\[
\begin{split}
Bf(x) =& \left\langle \mathfrak{s}\odot x + \left\langle\overset{\pm}\theta,x\right\rangle\binom{-1}{1} ,\nabla f(x)\right\rangle + \dfrac{1}{2}\left( \mathfrak{m}\Vert\vert x\vert\Vert^2\Delta f(x) + \left\langle \mathfrak{v}\odot x, (\nabla \odot\nabla)f(x)\right\rangle\right),
\end{split}
\]
and the vector field
\[
F:U\rightarrow\mathbb{R}^2,\quad x\mapsto \mathfrak{m}\big(1-\Vert\vert x\vert\Vert\big)x
\]
that pushes the process onto the manifold 
\[
\Gamma := F^{-1}(0)= \{(1-w, w)\;:\; w\in\mathbb{R}\}.
\]
The function
\[
\Phi: U\rightarrow\Gamma, \quad x\mapsto \dfrac{x}{\Vert\vert x\vert\Vert},
\]
maps any initial point $x\in U$ to its image on $\Gamma$ under the flow induced by $F$.

    For $k > 0$, define the set
    \[
    U_k := \left\{ x\in \mathbb{R}^2\;:\; \Vert\vert x\vert\Vert \in \left(\frac{1}{k},k\right), x^\pm \in (-k, k)\right\},
    \]
    which is relatively compact in $U$.
    Let
    \[
    \tau_K^k := \inf\left\{t\geq 0\;:\; X_K(t-)\not\in U_k\text{ or } X_K(t)\not\in U_k\right\},
    \]
    be the corresponding exit time.
\begin{lemma}\label{lem:marta_conditions_katzenberger_method}
    The quadratic variation $[X_K](t\wedge\tau_K^k)$ is uniformly integrable for all $t\geq 0$ and $k > 0$.
    Furthermore,
    \[
    \limsup_{K\to\infty} \mathbb{E}\left[\sup_{0\leq t\leq T\wedge\tau_K^k} \vert \Delta X_K(t)\vert^2\right] = 0,
    \]
    for all $T\geq 0$ and $k > 0$.
\end{lemma}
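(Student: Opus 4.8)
The plan is to reduce both assertions to a single moment estimate on the jumps of $X_K$, read off from the generator \eqref{eq:marta_generator_of_X_K} via the associated Lévy system. Since $X_K$ is a pure-jump semimartingale, $[X_K](t)=\sum_{s\le t}|\Delta X_K(s)|^2$, and for any $p>0$ the predictable compensator of $\sum_{s\le t}|\Delta X_K(s)|^{p}$ is $\int_0^t\int |a|^{p}\,\lambda\big(X_K(s),\mathrm da\big)\,\mathrm ds$, where $\lambda(x,\cdot)$ is the jump kernel encoded in \eqref{eq:marta_generator_of_X_K}. Optional stopping at $\tau_K^k$, together with the fact that the relevant integrands are bounded on $U_k$, then yields workable expressions for $\E[\sum_{s\le t\wedge\tau_K^k}|\Delta X_K(s)|^{p}]$. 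The key estimate is the case $p=2+\eta$: the reproduction jumps of type $\pm$ have size $(i-1)/K$ against rate $K^2 x^\pm\mu_K^\pm(i)$, contributing $K^{-\eta}x^\pm\sum_i|i-1|^{2+\eta}\mu_K^\pm(i)$ to $\int|a|^{2+\eta}\lambda(x,\mathrm da)$, while the competition and mutation jumps have size $O(1/K)$ against rates $O(K^2)$ and $O(K)$, contributing $O(K^{-\eta})$. On $U_k$ the prefactors $x^\pm$ are bounded, and $\sum_i|i-1|^{2+\eta}\mu_K^\pm(i)$ is uniformly bounded by \eqref{eq:eta-assumption}, whence
\[
\E\Big[\sum_{s\le t\wedge\tau_K^k}|\Delta X_K(s)|^{2+\eta}\Big]\le \frac{C_k\,t}{K^{\eta}}\xrightarrow[K\to\infty]{}0 .
\]

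For the maximal-jump claim I would use the pointwise bound $\sup_{s\le T\wedge\tau_K^k}|\Delta X_K(s)|^2\le\big(\sum_{s\le T\wedge\tau_K^k}|\Delta X_K(s)|^{2+\eta}\big)^{2/(2+\eta)}$, and then, since $x\mapsto x^{2/(2+\eta)}$ is concave, apply Jensen's inequality to move the expectation inside the power. This gives $\E\big[\sup_s|\Delta X_K(s)|^2\big]\le \big(\E[\sum_s|\Delta X_K(s)|^{2+\eta}]\big)^{2/(2+\eta)}$, which tends to $0$ by the display above; the claimed $\limsup$ is therefore $0$.

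For uniform integrability I would decompose $[X_K]$ into its compensator plus a martingale. The carré-du-champ computation shows that the density $\int|a|^2\lambda(x,\mathrm da)$ equals $x^+\big(\mathfrak v^+ +\mathfrak m\Vert\vert x\vert\Vert\big)+x^-\big(\mathfrak v^- +\mathfrak m\Vert\vert x\vert\Vert\big)$ up to an $O(1/K)$ correction, using the finite-variance assumption; this is bounded on $U_k$, so the predictable quadratic variation satisfies $\langle X_K\rangle(t\wedge\tau_K^k)\le C_k t$ almost surely. The compensated process $M_K:=[X_K]-\langle X_K\rangle$ is a purely discontinuous martingale with $\Delta M_K(s)=|\Delta X_K(s)|^2$, so $[M_K]=\sum_s|\Delta X_K(s)|^4$. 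Taking $p=1+\eta/2\in(1,2]$ (assuming $\eta\le 2$ without loss of generality), the Burkholder–Davis–Gundy inequality and subadditivity of $x\mapsto x^{p/2}$ give
\[
\E\big[|M_K(t\wedge\tau_K^k)|^{p}\big]\le C_p\,\E\big[[M_K](t\wedge\tau_K^k)^{p/2}\big]\le C_p\,\E\Big[\sum_{s\le t\wedge\tau_K^k}|\Delta X_K(s)|^{2+\eta}\Big],
\]
which is bounded by the key estimate. Combining this with the almost-sure bound on $\langle X_K\rangle$ yields $\sup_K\E\big[([X_K](t\wedge\tau_K^k))^{1+\eta/2}\big]<\infty$, and uniform integrability follows from the de la Vallée–Poussin criterion.

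I expect the uniform integrability to be the delicate point. The naive route of bounding $M_K$ in $L^2$ would require controlling $K^{-2}\sum_i|i-1|^4\mu_K^\pm(i)$, i.e.\ essentially fourth moments, which are \emph{not} guaranteed by \eqref{eq:eta-assumption}. The crux is therefore to work at the fractional exponent $1+\eta/2$, tuned precisely so that Burkholder–Davis–Gundy produces the $(2+\eta)$-th jump moment that the hypothesis does control. One must also check that the single, possibly large, jump that carries $X_K$ out of $U_k$ at time $\tau_K^k$ causes no trouble; this is automatic, since its contribution is already accounted for by the compensator evaluated at the pre-jump state in $U_k$.
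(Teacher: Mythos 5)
Your proposal is correct, but it takes a genuinely different route from the paper's proof, most visibly in the uniform integrability part. The paper argues by pathwise domination: on $U_k$ every jump rate is bounded, so $[X_K](t\wedge\tau_K^k)$ is dominated by $K^{-2}\sum_{i\ge 2}(i-1)^2N_i(t)$ for independent Poisson processes $N_i$ whose rates are of order $K^2 k\big(\mu_K^+(i)+\mu_K^-(i)\big)$ plus competition and mutation terms; it then treats this dominating sum as a single Poisson variable with parameter $\lambda_K^k=\mathcal{O}(K^2)$, computes its second moment, and concludes $\sup_K\mathbb{E}\big[[X_K](t\wedge\tau_K^k)^2\big]<\infty$, i.e.\ uniform integrability via boundedness in $L^2$. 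You instead decompose $[X_K]=\langle X_K\rangle+M_K$, bound the compensator deterministically on $U_k$, control $M_K$ in $L^{1+\eta/2}$ by Burkholder--Davis--Gundy together with subadditivity of $x\mapsto x^{p/2}$, and close with de la Vall\'ee--Poussin. This difference is substantive: a weighted sum $\sum_i(i-1)^2N_i(t)$ of independent Poisson processes is \emph{not} Poisson, and its true variance involves $\sum_i(i-1)^4\mu_K^\pm(i)$ --- fourth moments that assumption \eqref{eq:eta-assumption} does not supply unless $\eta\ge 2$. Your closing remark identifies precisely this obstruction, and your choice of the fractional exponent $1+\eta/2$ is tuned so that only the assumed $(2+\eta)$-moments enter; in this respect your argument is the more robust one and in fact repairs the weak point in the paper's own write-up. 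For the maximal-jump statement the two arguments share the same core (the $(2+\eta)$-moment tail bound): the paper truncates at jump size $1/\log K$ and applies a union bound over the large jumps, while your estimate $\sup_s|\Delta X_K(s)|^2\le\big(\sum_s|\Delta X_K(s)|^{2+\eta}\big)^{2/(2+\eta)}$ followed by Jensen is shorter and reaches the same conclusion.
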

\begin{proof}
Since $X_K$ is a pure-jump Markov process,
\[
[X_K](t) = \sum_{0\leq s\leq t} \vert X_K(s)\vert^2.
\]
Clearly, for $t\leq \tau_K^k$, this quantity can be coupled with an independent family of processes $(N_i)_{i\geq 2}$ such that
\[
[X_K](t\wedge\tau_K^k) \leq \dfrac{1}{K^2}\sum_{i\geq 2} (i-1)^2 N_i(t),
\]
where $N_i$ is a Poisson process with rate
\[
\begin{cases}
    K^2\Big(k\big(\mu_K^+(0) + \mu_K^+(2) + \mu_K^-(0) + \mu_K^-(2)\big) + mk^2\Big) & \text{ for }i=2,\\
    K^2k\big(\mu_K^+(3) + \mu_K^-(3)\big) + Kk(\theta^+ + \theta^-) & \text{ for }i=3,\\
    K^2k\big(\mu_K^+(i) + \mu_K^-(i)\big) & \text{ for }i\geq 4.
\end{cases}
\]
In particular, since the processes $(N_i)_{i\geq 2}$ are independent, we may write $[X_K](t\wedge\tau_K^k)\leq P_K(t)/K^2$, where $P_K$ is in law a Poisson random variable with parameter
\[
\lambda_K^k := K^2k\sum_{k\geq 0} (i-1)^2 \big( \mu_K^+(i) + \mu_K^-(i)\big) + K^2 mk^2 + Kk(\theta^+ + \theta^-),
\]
which has second moment $(\lambda_K^k)^2 + \lambda_K^k = \mathcal{O}(K^4)$, so that
\[
\sup_{K\in\mathbb{N}} \mathbb{E}\left[ [X_K](t\wedge\tau_K^k)^2\right] < +\infty.
\]
This implies in particular that it is uniformly integrable.
Finally, for $i\geq K/\log K$, we have
\begin{align*}
\mathbb{P}\left(\sup_{0\leq t\leq T\wedge\tau_K^k} \vert \Delta X_K(t)\vert^2 = \dfrac{(i-1)^2}{K^2}\right) &\leq \mathbb{P}(N_i(T) \geq 1) \\
&\leq \mathbb{E}[N_i(T)]\\
&= K^2Tk\big(\mu_K^+(i) + \mu_K^+(i)\big),
\end{align*}
so that
\begin{align*}
    \mathbb{E}\left[\sup_{0\leq t\leq T\wedge\tau_K^k} \vert \Delta X_K\vert^2\right] &\leq \mathcal{O}\left(\dfrac{1}{K^2}\cdot\left(\dfrac{K}{\log K}\right)^2\right) + \sum_{i\geq \frac{K}{\log K}} \dfrac{(i-1)^2}{K^2}\cdot K^2Tk\big(\mu_K^+(i) + \mu_K^-(i)\big)\\
    &= \mathcal{O}\left(\dfrac{1}{\log^2 K}\right) + Tk\sum_{i\geq \frac{K}{\log K}} (i-1)^2\big(\mu_K^+(i) + \mu_K^-(i)\big).
\end{align*}
Let $\eta > 0$ small enough so that $\sum_{i\geq 0} i^{2 + \eta}\mu_K^\pm(i)$ is uniformly bounded in $K$, then
\begin{align*}
    \sum_{i\geq \frac{K}{\log K}} (i-1)^2\mu_K^\pm(i) &\;\leq \left(\dfrac{\log K}{K}\right)^\eta\sum_{i\geq \frac{K}{\log K}} i^{2+\eta} \mu_K^\pm(i) \to 0,
\end{align*}
as $K\to \infty$.
\end{proof}

\begin{proof}[Proof of \Cref{theo:convergence_at_carrying_capacity}]
Since the idea is to apply  \Cref{theo:katzenberger_method}, we start by verifying \Cref{ass:katz_regularity_deter}. We note that the deterministic system induced by the vector field $F$ is well-behaved.
Indeed, all three $F$, $\Phi$ and $\Gamma$ are smooth.
Furthermore, we compute for $x = (w, 1-w)\in\Gamma$ that
\[
    \partial F(x) = \mathfrak{m}\begin{pmatrix}
        -(1-w) & -w\\ -(1-w) & -w
    \end{pmatrix},
\]
which has eigenvalues $0$ and $-\mathfrak{m} < 0$.
Since $\Gamma$ is of codimension $1$, the manifold is asymptotically stable for the flow induced by $F$.

Now, as every compact in $U$ is contained in some $U_k$, $k > 0$, \Cref{lem:marta_conditions_katzenberger_method} allows us to invoke \Cref{theo:katzenberger_method}.
We conclude that $(X_K^{\tau_K^2})_K$ is tight and that any limit point $X$ takes values in $\Gamma$ almost surely.
Furthermore, since $X_K^\pm(t) \geq 0$ for all $t\geq 0$, we conclude that $X^\pm(t) \geq 0$ a.s.\ for all $t\geq 0$.
In particular, $0\leq X^\pm \leq 1$ almost surely.
Writing $\tau^2$ for the exit time of $X$ from the set $U_2$, this implies $\tau^2 = +\infty$ almost surely and \Cref{theo:katzenberger_method} lets us conclude that $X$ is a solution to the martingale problem for the operator
\[
\begin{split}
    C f(x) &= \left( \partial\Phi(x)b(x) + \dfrac{1}{2}\sum_{i,j\in\pm} \partial_{ij}\Phi(x)c_{ij}(x)\right)^T\cdot\nabla f(x) \\
    &\qquad + \dfrac{1}{2}\sum_{i,j,k,\ell\in\pm}^d \partial_k\Phi_i(x)\partial_\ell\Phi_j c_{k\ell}(x)\partial_{ij}f(x),
\end{split}
\]
where, for $x = (1-w,w)\in\Gamma$,
\[
b(x) = \binom{\mathfrak{s}^+(1-w) + \theta^-w - \theta^+(1-w)}{\mathfrak{s}^-w + \theta^+(1-w) - \theta^- w},
\]
and $c_{11} = (1-w)(\mathfrak{m}+\mathfrak{v}^+)$, $c_{22} = w(\mathfrak{m} + \mathfrak{v}^-)$ and $c_{12} = 0 = c_{21}$.
Now, for $x = (1-w, w)\in \Gamma$, we have
\[
\partial\Phi(x) = \begin{pmatrix}
    w & -(1-w)\\ -w & 1-w
\end{pmatrix},
\]
as well as
\[
\partial_{+}^2\Phi(x) = 2\binom{-w}{w}\quad\text{ and }\quad \partial_{-}^2\Phi(x) = 2\binom{1-w}{-(1-w)}.
\]
Using the fact that $X$ is of the form $(1-W, W)$, we may conclude that $W$ solves the martingale problem for
\begin{align*}
    \widehat{C}f(w) &= \Big(-w\big(\mathfrak{s}^+(1-w) + \theta^-w - \theta^+(1-w)\big) \\
    &\qquad\quad + (1-w)\big(\mathfrak{s}^-w + \theta^+(1-w) - \theta^- w\big)\\
    &\qquad\quad + (1-w)(\mathfrak{m}+\mathfrak{v}^+)w - w(\mathfrak{m} + \mathfrak{v}^-)(1- w)\Big) f'(w)\\
    &\quad + \Big(w^2(\mathfrak{m} + \mathfrak{v}^+)(1-w) + (1-w)^2(\mathfrak{m} + \mathfrak{v}^-)w\Big)f''(w)\\
    &= \Big( -\big( (\mathfrak{s}^+ - \mathfrak{v}^+) - (\mathfrak{s}^- - \mathfrak{v}^-)\big)w(1-w) + \theta^+(1-w) - \theta^-w\Big)f'(w)\\
    &\quad + \Big( \mathfrak{m} + \mathfrak{v}^+w + \mathfrak{v}^-(1-w)\Big)w(1-w)f''(w).
\end{align*}
Strong uniqueness of \eqref{eq:Gillespie-WF} follows by adapting \cite[Theorem 2]{CPP21}. Using the fact that having a solution to this martingale problem corresponds to the existence of a weak solutions to \eqref{eq:Gillespie-WF}, see e.g.~\cite{Kurtz10}, we conclude that the martingale problem for $\widehat{C}$ is well-posed and that the solution is the unique strong solution to \eqref{eq:Gillespie-WF}.
The remaining claim on duality follows from \cite[Theorem 2]{CPP21}.
\end{proof}

\section{The genealogy at carrying capacity}\label{sec:limit-genealogy}

Classically, the duality relation \eqref{eq:moment-duality} has been linked to a sampling duality between the forward-in-time dynamics of a finite population and the lineage counting process of the corresponding genealogy.
In the presence of selective forces, this interpretation comes with a caveat: instead of being linked to the actual genealogy of an untyped sample, the duality is based on the \emph{potential ancestry} which allows for branching when a reproduction may come from a selected type. This leads to the Ancestral Selection Graph (ASG) introduced in \cite{KN97}.
In the setting of fluctuating population sizes, this analysis becomes more involved as the backward-in-time dynamics of the genealogy depends on the current population size and usually cannot be described as an autonomous Markov process.

The goal of this section is to provide this genealogical interpretation of the duality relation \eqref{eq:moment-duality} through the construction of an ASG whose lineage counting process converges to the dual process \eqref{eq:dual-generator}.
Through the encoding of the ASG as metric measure spaces, this analysis can be extended to study the convergence of the potential ancestry itself to a limiting object, see e.g.~the introduction of \cite{felix24} and the references therein for an overview.
Although we expect the arguments in this section to adapt to that setting, the extension to genealogical metric measure spaces is beyond the scope of this paper and will be left to future work.

We will proceed as follows: we first introduce, in \Cref{sec:underlying_assumptions}, the assumptions we need for the construction of the ASG; in \Cref{sec:graph-construction-forward}, we give a graphical construction of the population size process $\left(N_K^+,N_K^-\right)$, and then define, in \Cref{sec:ASG}, the corresponding ASG; finally, in \Cref{sec:convergence-genealogy}, we will prove the convergence of the lineage counting process through a coupling argument.

As the proof itself is rather involved, we have decided to omit the mutation dynamics completely.
Note, however, that mutations from the selected to the weak type can be easily recovered \emph{a posteriori} by considering an appropriate Poisson process on the lineages marking the mutation events.
At one such event, the affected lineage is then pruned in the ASG as the individual before the mutation had to be of the selected type.
For mutations from the weak to the selected type, the situation is more complicated, see also \cite{BW17} in which the authors introduce the \emph{killed ASG} to accommodate for this type of mutation.

\begin{remark}
Since we build the ASG for a population that has already reached the carrying capacity, we always consider time to be rescaled by a factor $K$ as in \Cref{theo:convergence_at_carrying_capacity}. 
In the following, we will therefore slightly abuse notation and write $N_K(t)$ instead of $N_K(Kt)$.
\end{remark}

\subsection{Underlying assumptions}\label{sec:underlying_assumptions}

The whole construction relies on two sets of assumptions, the first of which will ensure that we indeed are in the diffusive regime.
More precisely, we will suppose the assumptions from \Cref{theo:convergence_at_carrying_capacity} to hold in order for the rescaled population size to converge to $1$.
Additionally, we assume the moment bounds
\begin{equation}\label{eq:higher-moments-bound}
\sum_{i\geq 0} i^\ell \mu_K^\pm(i) = \text{o}\left(K^{\ell - 2}\right)
\end{equation}
to hold for $\ell\geq 3$.
Note that the bounding constant may depend on $\ell$.
This weak moment assumption ensures that no more than two lineages will coalesce at once and can be thought of extending the assumption
\[
\sum_{i\geq 0} i^{2+\eta} \mu_K^\pm(i)  < +\infty,
\]
for some $\eta > 0$ (although the latter does not follow from \eqref{eq:higher-moments-bound}).

Less apparent is that we need the two reproduction mechanisms $\mu^-$ and $\mu^+$ to be \emph{comparable}.
More precisely, to construct the ASG, we will couple the two mechanisms in such a way that deaths and reproductions occur through common \emph{events} at which an individual is chosen at random from the population and reproduces depending on its type.
The selective advantage stems from the fact that individuals of the selected type bear \emph{more} offspring at each reproduction event and participate \emph{less} often in death events.
To separate reproduction from death, set $\widehat{\nu}_K^\pm := \mu_K^\pm(\cdot \cap \mathbb{N})$.
%as well as $\mu_K^\pm(0) := \mu_K^\pm(0)$.
Then, throughout this section, we assume that
\begin{equation}\label{eq:ordering_measures}
\widehat{\nu}_K^-([i, \infty)) \leq \widehat{\nu}_K^+([i, +\infty)),
\end{equation}
for all $i\in\mathbb{N}$ as well as
\begin{equation}\label{eq:ordering_death_rates}
    \mu^+_K(0) \leq \mu^-_K(0).
\end{equation}
Since reproduction with one offspring does not influence the dynamics of the process, we may assume that $\mu_K^+$ and $\mu_K^-$ have the same total mass (by adding a suitable multiple of $\delta_1$ to the one with less mass).
Following \cite{CKP23}, \eqref{eq:ordering_measures} guarantees the existence of a \emph{coupling measure} $\widehat{\nu}_K$ on $\mathbb{N}\times \mathbb{N}_0$ satisfying
\[
\widehat{\nu}_K^-(A) = \widehat{\nu}_K\big(\{(i, j)\;:\; y\in A, j\in\mathbb{N}_0\}\big) \qquad\text{ and }\qquad \widehat{\nu}_K^+(B) = \widehat{\nu}_K\big(\{(i,j)\;:\; i+j\in B\}\big),
\]
for all $A,B\subseteq \mathbb{N}$.
An unfortunate side effect of this assumption is that we can only treat the case $\mathfrak{v}^+ = \mathfrak{v}^- =: \mathfrak{v}$, see \Cref{lem:properties_mixed_moments}.

Before going into the rigorous definition of the ASG (which we provide in \Cref{sec:ASG}), let us give an intuition on how death and reproduction influence the potential ancestry.
We first consider a reproduction event $(i,j)$, meaning that a parent is chosen uniformly from the population and is replaced by either $i$ or $i+j$ children, depending on its type.
More precisely, there are $i$ children if a wild type reproduces and $i+j$ children if the selected type reproduces.
If we think of the individuals in the population labeled from $1$ to $n\in\mathbb{N}$, this is equivalent to shuffling the labels uniformly and picking the individual with label $1$ as parent. The children will take labels $\{1,\dots,i\}$ if the parent is of the wild type, or $\{1,\dots,i+j\}$ if the parent is selected; the labels of the remaining population being shifted upwards accordingly, see \Cref{fig:point-processes}.
Then, we colour the $i$ lowest labels in blue and the labels $i+1,\dots, i+j$ in red.

Backwards in time, if a lineage falls onto a blue label, it surely has participated in the reproduction event and will merge into the parent individual with label $1$.
If, on the other hand, the lineage falls onto a red label, we do not know whether it has participated in the reproduction event.
The lineage will therefore merge into the one with label $1$ and also \emph{branch}.
Finally, if a lineage does not fall on a coloured label, it simply follows the change in label.
Note that this may lead to additional mergers that vanish asymptotically (see \Cref{sec:ASG}).

For the deaths, we will also need a consistent coupling.
From the fact that $\mu_K^+(0) \leq \mu_K^-(0)$, we may first throw death events at \emph{per capita} rate $\mu_K^+(0)$ at which a individual will be chosen uniformly from the population and \emph{independently from its type} to die.
These events will not affect the potential ancestry.
Additionally, we will throw selective death events at \emph{per capita} rate $\mu_K^-(0) - \mu_K^+(0)$ at which we select an individual uniformly at random which only dies if of wild type.
As before, we may reformulate these additional death events by reshuffling the labels and killing the individual with label $1$, if it is of wild type.
All other individuals will decrease their label by one accordingly.
Backwards in time, if a lineage falls onto label $1$ and encounters a selective death event, we again have to follow both potential ancestors so that the lineage branches into the lineages with label $1$ and $2$.

Comparing the two different mechanisms, we see that we can interpret the two types of death events as reproduction events of the form $(0,0)$ or $(0,1)$, depending on whether it is common or selective. 
In the following, we will therefore consider the modified ``reproduction" measure 
\[
\nu_K = \widehat{\nu}_K + \mu_K^+(0)\delta_{(0,0)} + (\mu_K^-(0) - \mu_K^+(0))\delta_{(0,1)}
\]
on $\mathbb{N}_0\times \mathbb{N}_0$, with $\nu_K(0,j) = 0$ for all $j>1$, satisfying the moment bounds \eqref{eq:higher-moments-bound} as well as
\begin{equation}\label{eq:average_selective_advantage}
\sum_{i,j\geq 0} j \nu_K(i,j) = \mathfrak{m}_K^+ - \mathfrak{m}_K^- = \dfrac{\mathfrak{s}^+-\mathfrak{s}^-}{K} + \text{o}\left(\frac{1}{K}\right),
\end{equation}
and
\begin{equation}\label{eq:variance_of_non_selective_stuff}
\sum_{i,j\geq 0} i(i-1)\nu_K(i,j) = \sum_{i,j\geq 0} \Big( (i-1)^2 + (i-1)\Big)\nu_K(i,j) = \mathfrak{v}^-_K + \mathfrak{m}_K^-.
\end{equation}
For further reference, set $\mathfrak{s} := \mathfrak{s}^+ - \mathfrak{s}^-\geq 0$.
Finally, note that by rescaling time by a constant factor, we may and will assume that $\nu_K$ is a probability measure.

\begin{lemma}\label{lem:properties_mixed_moments}
We have that 
\[
\lim_{K\to \infty}\sum_{i,j\geq 0} j^2 {\nu}_K(i,j) = 0 \qquad\text{ and }\qquad 
\lim_{K\to \infty} \sum_{i,j\geq 0} ij{\nu}_K(i,j) = 0.
\]
It follows in particular that $\mathfrak{v}^+ = \mathfrak{v}^-$.
\end{lemma}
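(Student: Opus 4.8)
The plan is to exploit the coupling structure of $\widehat\nu_K$: its first marginal is $\widehat\nu_K^-$ while its push-forward under $(i,j)\mapsto i+j$ is $\widehat\nu_K^+$, and since $j\geq 0$ the two are coupled monotonically ($i\leq i+j$). The atoms added to form $\nu_K$ contribute $0$ and $\mu_K^-(0)-\mu_K^+(0)$ to $\sum_{i,j}j^2\nu_K(i,j)$ and nothing to $\sum_{i,j}ij\,\nu_K(i,j)$, so that
\[
\sum_{i,j}j^2\nu_K(i,j)=\sum_{i,j}j^2\widehat\nu_K(i,j)+\big(\mu_K^-(0)-\mu_K^+(0)\big),\qquad \sum_{i,j}ij\,\nu_K(i,j)=\sum_{i,j}ij\,\widehat\nu_K(i,j).
\]
It therefore suffices to control the two sums against $\widehat\nu_K$ and the correction term separately; since normalising $\nu_K$ amounts to dividing by its total mass $\mu_K^-(\N_0)$, which converges to a finite positive constant, this normalisation affects none of the limits.

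First I would handle the first moment and the correction term together. As $\widehat\nu_K\leq\nu_K$, identity \eqref{eq:average_selective_advantage} yields $\sum_{i,j}j\,\widehat\nu_K(i,j)\leq\sum_{i,j}j\,\nu_K(i,j)=\mathfrak m_K^+-\mathfrak m_K^-$, which is of order $1/K$ by \eqref{eq:marta_ass_on_means}. Splitting \eqref{eq:average_selective_advantage} into its $\widehat\nu_K$-part and the atom at $(0,1)$ and using that the former is non-negative while $\mu_K^-(0)-\mu_K^+(0)\geq 0$ by \eqref{eq:ordering_death_rates} gives the sandwich
\[
0\leq \mu_K^-(0)-\mu_K^+(0)\leq \mathfrak m_K^+-\mathfrak m_K^-=\mathcal O(1/K),
\]
so the correction term vanishes as $K\to\infty$.

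The heart of the argument is $\sum_{i,j}j^2\widehat\nu_K(i,j)\to 0$, which I would prove by truncating at level $\epsilon K$. On $\{j\leq\epsilon K\}$ I bound $j^2\leq\epsilon K\,j$, giving a contribution at most $\epsilon K\sum_{i,j}j\,\widehat\nu_K(i,j)=\mathcal O(\epsilon)$. On $\{j>\epsilon K\}$ I use $0\leq j\leq i+j$ together with $(i+j)^2\1_{\{i+j>\epsilon K\}}\leq (i+j)^3/(\epsilon K)$, so that this part is at most
\[
\frac1{\epsilon K}\sum_{i,j}(i+j)^3\widehat\nu_K(i,j)=\frac1{\epsilon K}\sum_{i\geq 0} i^3\mu_K^+(i)=\frac{\text{o}(K)}{\epsilon K}=\frac{\text{o}(1)}{\epsilon},
\]
by the moment bound \eqref{eq:higher-moments-bound} with $\ell=3$. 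Sending $K\to\infty$ and then $\epsilon\downarrow 0$ gives the claim. The mixed moment then follows at once from the Cauchy--Schwarz inequality for the finite measure $\widehat\nu_K$,
\[
\sum_{i,j}ij\,\widehat\nu_K(i,j)\leq\Big(\sum_{i,j}i^2\widehat\nu_K(i,j)\Big)^{1/2}\Big(\sum_{i,j}j^2\widehat\nu_K(i,j)\Big)^{1/2},
\]
because $\sum_{i,j}i^2\widehat\nu_K(i,j)=\sum_{i\geq 0}i^2\mu_K^-(i)$ is bounded uniformly in $K$ by the finite-variance assumption, while the second factor tends to $0$.

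It remains to read off $\mathfrak v^+=\mathfrak v^-$. Since $j=(i+j)-i$, a direct expansion gives the exact identity
\[
\sum_{i,j}j^2\widehat\nu_K(i,j)+2\sum_{i,j}ij\,\widehat\nu_K(i,j)=\sum_{i,j}\big((i+j)^2-i^2\big)\widehat\nu_K(i,j)=\sum_{i\geq 0}i^2\big(\mu_K^+(i)-\mu_K^-(i)\big).
\]
Writing $i^2=(i-1)^2+2(i-1)+1$, the right-hand side equals $\sum_i(i-1)^2(\mu_K^+-\mu_K^-)+2(\mathfrak m_K^+-\mathfrak m_K^-)+(\mu_K^+(0)-\mu_K^-(0))$, where I used that $\mu_K^+$ and $\mu_K^-$ carry equal reproduction mass $\mu_K^\pm(\N)$ (as required for the coupling $\widehat\nu_K$ to exist) so that $\sum_i(\mu_K^+-\mu_K^-)=\mu_K^+(0)-\mu_K^-(0)$. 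By the variance convergence assumption the first term tends to $\mathfrak v^+-\mathfrak v^-$, while the other two vanish by \eqref{eq:marta_ass_on_means} and the sandwich above. Since the left-hand side tends to $0$ by the preceding steps, $\mathfrak v^+=\mathfrak v^-$ follows. I expect the truncation estimate for $\sum_{i,j}j^2\widehat\nu_K$ to be the only genuinely delicate point, as it is where the $\mathcal O(1/K)$ first-moment bound must be balanced against the weak higher-moment control \eqref{eq:higher-moments-bound}; everything else is bookkeeping.
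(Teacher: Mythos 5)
Your proof is correct, and it shares the paper's core quantitative idea -- interpolating between the first moment $\sum_{i,j} j\,\nu_K(i,j) = \mathcal{O}(1/K)$ from \eqref{eq:average_selective_advantage} and the third moment $\text{o}(K)$ from \eqref{eq:higher-moments-bound} -- but the implementation differs at every step. For the $j^2$-moment, the paper applies H\"older directly, $\sum_{i,j} j^2\nu_K(i,j) \leq \big(\sum_{i,j} j\,\nu_K(i,j)\big)^{1/2}\big(\sum_{i,j} j^3\nu_K(i,j)\big)^{1/2} = \mathcal{O}(K^{-1/2})\cdot\text{o}(K^{1/2})$, which is a one-line version of your truncation at level $\epsilon K$: the two estimates have identical content, the paper's being shorter, yours making the balancing mechanism more explicit. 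For the mixed moment, you use Cauchy--Schwarz against the uniformly bounded second moment of $i$ under $\widehat\nu_K$, while the paper repeats the first/third moment interpolation; both are valid. The most substantive difference is the last step: the paper writes $\mathfrak{v}_K^+ - \mathfrak{v}_K^- = \sum_{i,j}\big((i+j-1)^2-(i-1)^2\big)\nu_K(i,j) = \sum_{i,j}\big(j^2 + 2(i-1)j\big)\nu_K(i,j)$, i.e.\ it centers at $i-1$ and works with the full measure $\nu_K$; since the atoms at $(0,0)$ and $(0,1)$ are precisely how $\nu_K$ encodes the death rates $\mu_K^\pm(0)$, this identity is exact and requires no bookkeeping of masses or means. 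Your expansion of $(i+j)^2 - i^2$ under $\widehat\nu_K$ alone is also exact, but forces you to track the corrections $2(\mathfrak{m}_K^+ - \mathfrak{m}_K^-)$ and $\mu_K^+(0)-\mu_K^-(0)$ by hand, the latter via the equal-reproduction-mass convention needed for the coupling $\widehat\nu_K$ to exist (note this means equal mass on $\mathbb{N}$, which is indeed the right reading of the paper's convention); your sandwich $0\leq \mu_K^-(0)-\mu_K^+(0)\leq \mathfrak{m}_K^+-\mathfrak{m}_K^-$ handles this correctly, so the extra bookkeeping is sound -- it is simply what the paper's choice of centering makes unnecessary.
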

\begin{proof}
The last implication follows through the observation
\[
\mathfrak{v}^+_K - \mathfrak{v}^-_K = \sum_{i\geq 0} (i - 1)^2 \big( \mu_K^+(i) - \mu_K^-(i)\big) = \sum_{i,j\geq 0} \Big((i+j-1)^2 - (i-1)^2\Big) {\nu}_K(i,j),
\]
together with
\begin{align*}
\sum_{i,j\geq 0} \Big((i+j-1)^2 - (i-1)^2\Big) {\nu}_K(i,j) = \sum_{i,j\geq 0} \Big(j^2 + 2(i-1)j\Big) {\nu}_K(i,j) \underset{K\to \infty}{\longrightarrow} 0,
\end{align*}
from the other claims.
Next, we use Hölder's inequality, with $p=q=2$, to obtain the bound
\begin{align*}
\sum_{i,j\geq 0} j^2 {\nu}_K(i,j)\leq\sqrt{\sum_{i,j\geq 0} j\nu_K(i,j)}\sqrt{\sum_{i,j\geq 0} j^3{\nu}_K(i,j)} &= \mathcal{O}\left(K^{-1/2}\right)\cdot \text{o}\left(K^{1/2}\right) = \text{o}(1),
\end{align*}
where we used that 
\[
\sum_{i,j\geq 0} j^3{\nu}_K(i,j) \leq \sum_{i,j\geq 0} (i+j)^3 {\nu}_K(i,j) \leq \mu_K^-(0) - \mu_K^+(0) + \sum_{i\geq 0} i^3 \mu_K^+(i) = \text{o}(K).
\]
The second claim follows analogously.
\end{proof}

Recall that the population size process $\left(N_K(t)/K\right)_{t\geq 0}$ converges weakly in $\mathbb{D}([0,+\infty); \mathbb{R})$ to the constant function equal to $1$.
Since the limit is deterministic and continuous, the convergence automatically holds in probability and w.r.t.~the topology of uniform convergence on compacts.
In particular, we may choose a sequence $(\epsilon_K)_{K\in\mathbb{N}}$ of positive numbers with $\epsilon_K \to 0$, as $K\to \infty$, and such that
\begin{equation}\label{eq:varepsilon-condition}
    \lim_{K\to \infty}\P\left(\frac{N_K(t)}{K}\in[1-\varepsilon_K,1+\varepsilon_K], \,\forall t \in [0,T]\right)=1.
\end{equation}
Defining $N_K^\downarrow := \lfloor (1 - \epsilon_K)K\rfloor$ and $N_K^\uparrow := \lceil (1 + \epsilon_K)K\rceil$, the above ensures that the event 
\[
\mathcal{E}_K := \left\{ N_K(t) \in \left[ N_K^\downarrow, N_K^\uparrow\right], \,\forall t\in [0,T]\right\},
\]
occurs asymptotically almost surely.
As such, it suffices to consider all derived processes only on this event.
We will make use of this fact not only for the analysis itself, but also to brush over some technicalities in the construction by leaving processes undefined (or better: arbitrarily defined) outside of $\mathcal{E}_K$.

\subsection{Graphical construction of $(N_K^+, N_K^-)$}\label{sec:graph-construction-forward}

In the following, we construct the population size process $\left(N_K^+(t),N_K^-(t)\right)_{t\in [0,T]}$, where $T>0$ is a finite time horizon, from a Poisson point process that will allow us to define the ASG explicitly.
The reproduction events will be driven exclusively by a Poisson point process $\xi_K$ on $[0,T]\times \mathbb{N}_0\times \mathbb N_0$ with mean intensity $K\mathrm{d}t\otimes N_K^\uparrow\nu_K(\mathrm{d}i, \mathrm{d}j)$.
One should think of $\xi_K$ as containing all \emph{possible} reproduction events.
The decision to \emph{effectively} use a reproduction event will be made based on the value of a uniform random variable $U(t,i,j)$ on $[0,1]$, sampled independently for every $(t,i,j)\in\xi_K$.

\begin{figure}
    \centering
    \includegraphics[width=0.8\linewidth]{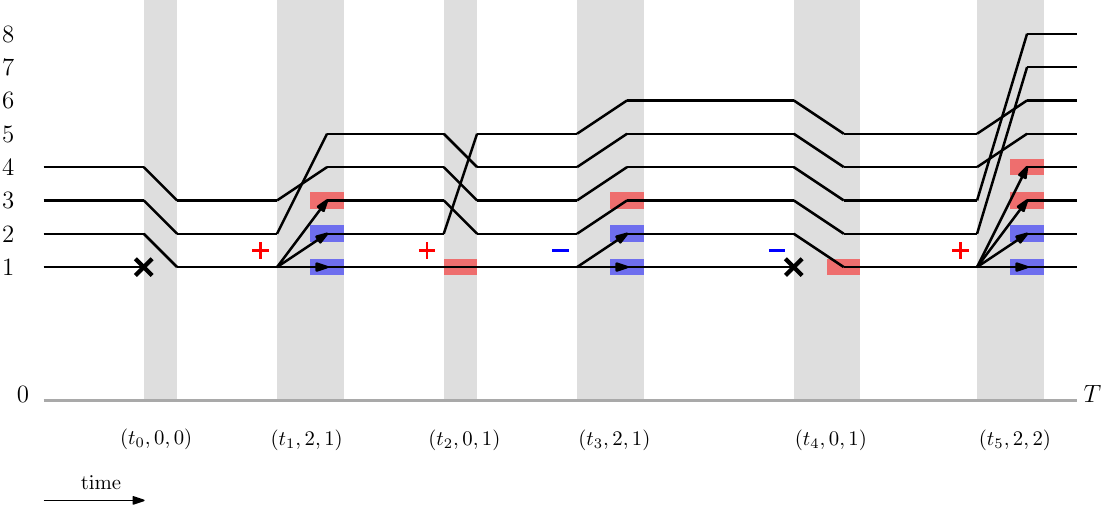}
    \caption{We show a realisation of the graphical representation of the population. Time increases from the left to the right; we show, at the bottom, the points of $\hat{\xi}_K$ and, on the left, all the possible labels. We do not show the shuffling of the lineages and only highlight the type of the individual with label $1$ before every reproduction event.}
    \label{fig:point-processes}
\end{figure}

Next, we construct a second point process $\hat{\xi}_K$ and a graphical representation of the population of size $\left(N_K^+(t),N_K^-(t)\right)_{t\in [0,T]}$ simultaneously as follows.
Set $N_K(0)^\pm$ to the initial condition of the population and \mbox{$\hat\xi_K \cap \left\{ \{0\}\times \mathbb{N} \times \mathbb{N}_0\right\} := \emptyset$}.
The initial individuals carry labels going from $1$ to $N_K(0)$.
Suppose that both the process $\left(N_K^+,N_K^-\right)$ and the point process $\hat\xi_K$ have been constructed up to some time $t_0$.
Recall that we define $N_K (t_0)= N_K^+(t_0) + N_K^-(t_0)$.
Let $(t,i,j)\in\xi_K$ be the first event after time $t_0$.
At the event, two things may happen: if $U(t,i,j) \geq \frac{N_K(t_0)}{N_K^\uparrow}$, nothing happens; otherwise the following procedure starts:
\begin{enumerate}
    \item The point $(t,i,j)$ is added to $\hat{\xi}_K$.
    
    \item The labels of the individuals are shuffled uniformly and independently from the rest, then the individuals with label $1$ is selected as parent.

    \item If the parent is of type $-$, it produces $i$ children with labels $1, \dots, i$ and the labels of all other individuals are shifted up (or down, if $i=0$) by $i-1$; if the parent is of type $+$, it produces $i+j$ children with labels $1, \dots, i+j$ and the labels of the other individuals change as follows: the individuals with label $\ell > 1 + j$ shift their label up by $i-1$, the remaining $j$ individuals are shifted to the topmost $j$ labels.
    The population size changes accordingly by $(i-1, 0)$ or by $(0, i+j-1)$.

    \item The lowest $i$ labels are coloured in blue, the labels $i+1,\dots, i+j$ are coloured in red. If $i = 0 = j$, no label is coloured. 
\end{enumerate}
If the population size leaves the interval $[N_K^\downarrow, N_K^\uparrow]$ during an event, the process is stopped and does not change anymore.
See Figure \ref{fig:point-processes} for a sketch of the above construction.

\subsection{Construction of the ASG}\label{sec:ASG}

Consider a uniform sample of $m$ labels from the population alive at time $T$. 
The ASG traces the potential ancestry of the associated individuals backward in time without using the \emph{type} of the individuals.
This leads to the following heuristic definition: the ASG jumps at most at events $(T-t, i,j)\in\hat\xi_K$ and follows the evolution of the labels with one additional rule: any lineage that falls onto a red label $i+k$ will instead branch onto both the individuals with labels $1$ and $1+k$.
See Figure \ref{fig:real-ancestral-selection} for a representation of the ASG. 

We highlight that the ASG is \emph{not} Markovian in the sense that, at an event $(T-t, i,j)$, it does not only make use of its past and the information $(N_{T-s})_{s\in[0,T]}$, but also of the type of the parent individuals or, equivalently, $N_{(T-t)-}$.
Indeed, every time a lineage falls onto one of the $j$ top-most labels, it is impossible to predict which route the ancestral line will take without the knowledge of the parent type.
Note, however, that this problem almost never affects the \emph{topology} of the potential ancestry, in the sense that it affects whether ancestral lines merge or branch only in the following two scenario: at an event $(i,j)$, at least one lineage falls onto one of the $j$ top-most labels and at least one additional lineage falls onto a red spot. We show later in this section that this scenario has vanishing probability.
In all other cases, it might be unclear where exactly a lineage moves, but the number of lineages merging and branching will be known.
Together with the fact that the labels are shuffled at every event, this will allow us to define a Markovian \emph{lineage counting process}.

More precisely, write $A_K(t)$ for the number of lineages in the ASG alive at time $t\in[0,T]$.
Again, $A_K$ jumps at most at points in $\hat\xi_K$. 
Furthermore, since the lineages are shuffled uniformly at every event, we can compute the transitions of $A_K$ explicitly with the following caveat: in the problematic case of having one lineage in the $j$ top-most lineages and one on a red label, we will send the lineage counting process to a cemetery state $\ast$.
More precisely, setting $\hat\xi_K(t) := \hat\xi_K\cap [T-t,T]\times \mathbb N_0^2$, we have that $t\mapsto (\hat\xi_K(t), N_K(T-t), A_K(t))$ is a pure jump Markov process w.r.t.~its natural filtration and at every $(T-t,i,j ) \in \hat{\xi}_K$, $A_K$ jumps according to the following transition probabilities:
\begin{equation}\label{eq:transition_prob_A_K}
n\mapsto\begin{cases}
    n + 1& \text{ with probability } p_{i,j}^+(n, N_K(T-t)) + \widehat{p_{i,j}}^+(n,N_K(T-t)),\\
    n - 1 & \text{ with probability } p_{i,j}^-(n, N_K(T-t)) + \widehat{p_{i,j}}^-(n, N_K(T-t)),\\
    \ast & \text{ with probability } p_{i,j}^\ast(n, N_K(T-t)),\\
    n - k + 1 & \text{ for $3\leq k\leq n$ with probability } p_{i,j}^k( n, N_K(T-t)),\\
    n & \text{ otherwise }
\end{cases}
\end{equation}
where, in the context of drawing $n$ lineages uniformly at random from $N$ possible spots ($i$ of which are blue and $j$ are red),
\begin{itemize}
    \item the expression
    \[
        p_{i,j}^+(n, N) = \dfrac{\binom{j}{1}\binom{N - (i+2j)}{n-2}}{\binom{N}{n}} = n\cdot\dfrac{j}{N}\cdot\prod_{k=1}^{n-1} \left(1 - \dfrac{(i+j-1) + j}{N-k}\right)
    \]
    denotes the probability of having exactly one lineage hit a red label and none hit a blue label or one of the $j$ top-most labels;

    \item the expression
    \[
        \widehat{p_{i,j}}^+(n,N) = \sum_{\ell = 2}^{n\wedge(i+j)} \dfrac{\binom{j}{\ell}\binom{N-(i+2j)}{n-\ell}}{\binom{N}{n}}
    \]
    denotes the probability of having at least two lineages hit a red label and none hit a blue label or one of the $j$ top-most labels;

    \item the expression
    \[
        p_{i,j}^-(n,N) = \dfrac{\binom{i}{2}\binom{N-(i+j)}{n-2}}{\binom{N}{n}} = \binom{n}{2}\cdot \dfrac{i(i-1)}{N(N-1)}\cdot \prod_{k=2}^{n-1}\left( 1 - \dfrac{(i-1) + (j-1)}{N-k}\right)
    \]
    denotes the probability of having exactly two lineages fall onto a blue label and none fall onto a red label;
    
    \item the expression
    \[
    \widehat{p_{i,j}}^-(n, N) = \sum_{\ell=1}^{(n-2)\wedge j} \dfrac{\binom{i}{2}\binom{j}{\ell}\binom{N - (i+2j)}{n-2-\ell}}{\binom{N}{n}}
    \]
    denotes the probability of having exactly two lineages fall onto a blue label, at least one fall onto a red label and none hit one of the $j$ top-most labels;

    \item the expression $p_{i,j}^\ast(n,N)$, with
    \[
        p_{i,j}^\ast(n,N) \leq \dfrac{\binom{j}{1}^2\binom{N}{n-2}}{\binom{N}{n}} = n(n-1)\cdot \dfrac{j^2}{(N-n+1)(N-n+2)},
    \]
    denotes the probability of having at least one lineage fall onto a red label and at least one lineage be among the $j$ top-most lineages;

    \item the expression $p_{i,j}^k(n,N)$, with
    \[
        p_{i,j}^k(n,N) \leq \dfrac{\binom{i}{k}\binom{N - i}{n-k}}{\binom{N}{n}},
    \]
    denotes the probability of having exactly $k$ lineages fall onto a blue label, any number fall onto a red label, and none fall onto one of the $j$ top-most labels if at least one red label is hit.
    
\end{itemize}

The following two results on the behaviour of the different probabilities will be useful in the next section.

\begin{figure}
    \centering
    \includegraphics[width=0.8\linewidth]{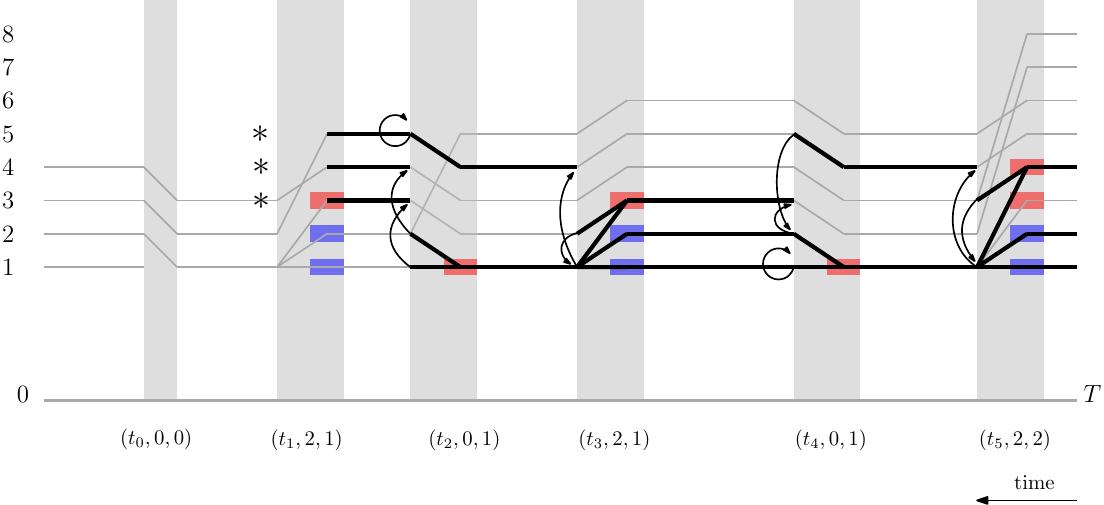}
    \caption{Given the graph of Figure \ref{fig:point-processes}, consider the genealogy of a sample of $m=3$ individuals alive at time $T$. At a reproduction event, we proceed with the coalescing and/or branching update. The arrows then show how we shuffle the remaining lineages after update. At the second to last event, we have a lineage on the red label and one on the top, hence the process goes in the cemetery state.}
    \label{fig:real-ancestral-selection}
\end{figure}

\begin{lemma}\label{lem:7_probabilities_ordered}
    Let $\square\in\{+,-\}$.
    For every $L\in\mathbb{N}$, there exist $K_L\in\mathbb{N}$ and $\kappa_L> 0$ such that for all $1\leq n\leq L$, $K \geq K_L$ and $0\leq i,j\leq \kappa_L K$, we have that $N\mapsto p_{i,j}^\square(n, N)$ is monotonically decreasing on $[N_K^\downarrow, N_K^\uparrow]$.
    Furthermore,
    \[
    \sup_{1\leq n\leq L} \left\vert p_{i,j}^\square(n,N_K^\uparrow) - p_{i,j}^\square(n, N_K^\downarrow)\right\vert \leq C_L\cdot \left(\dfrac{j}{K} + \dfrac{i(i-1)}{K^2}\right)\epsilon_K,
    \]
    for all $K\geq K_L$ and some constant $C_L$ only depending on $L$.
\end{lemma}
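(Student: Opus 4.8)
The plan is to treat $N$ as a continuous variable in the product representations of $p_{i,j}^\square$ given in the statement and to control both claims through the logarithmic derivative $\partial_N \log p_{i,j}^\square(n,N)$. Throughout, I fix $L$ and aim to choose $\kappa_L$ small and $K_L$ large. Note first that for $j=0$ one has $p_{i,j}^+\equiv 0$ and for $i\leq 1$ one has $p_{i,j}^-\equiv 0$, so the statement is trivial in those cases and I may assume $j\geq 1$, $i\geq 2$. For $\kappa_L$ small enough the bound $i+2j\leq 3\kappa_L K$ guarantees that the binomial coefficients are well-defined on $[N_K^\downarrow,N_K^\uparrow]$ (all arguments remain nonnegative) and that every factor appearing in the products lies in $(0,1)$.

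For the monotonicity, write $a:=i+2j-1$ and differentiate the product form to obtain
\[
\partial_N \log p_{i,j}^+(n,N) = -\frac{1}{N} + \sum_{k=1}^{n-1}\frac{a}{(N-k)(N-k-a)}.
\]
The key point is that on $[N_K^\downarrow, N_K^\uparrow]$ one has $N-k\geq cK$ and $N-k-a\geq cK$ for some $c>0$, once $\kappa_L$ is small (so that $a$ is a small fraction of $N$) and $K\geq K_L$. Hence each summand is $\mathcal{O}(\kappa_L/K)$, and since there are at most $L-1$ of them the whole sum is at most $3(L-1)\kappa_L/(c^2 K)$, which is strictly below $1/N\leq 1/N_K^\uparrow$ after a final shrinking of $\kappa_L$. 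This forces the logarithmic derivative to be negative, so $N\mapsto p_{i,j}^+(n,N)$ is decreasing. For $p_{i,j}^-$ the same computation with $b:=i+j-2$ gives $\partial_N\log p_{i,j}^-(n,N) = -1/N - 1/(N-1) + \sum_{k=2}^{n-1} b/[(N-k)(N-k-b)]$, where the prefactor now contributes an even more negative term, so the identical estimate applies.

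For the difference bound, I would combine $\partial_N p = p\cdot \partial_N\log p$ with the fundamental theorem of calculus,
\[
p_{i,j}^\square(n,N_K^\uparrow) - p_{i,j}^\square(n,N_K^\downarrow) = \int_{N_K^\downarrow}^{N_K^\uparrow}\partial_N p_{i,j}^\square(n,N)\dx N.
\]
From the previous step $\lvert\partial_N\log p_{i,j}^\square\rvert\leq C_L'/K$ uniformly in $1\leq n\leq L$. Bounding the product factors by $1$ gives $p_{i,j}^+(n,N)\leq nj/N\leq Lj/(cK)$ and $p_{i,j}^-(n,N)\leq \binom{n}{2} i(i-1)/(N(N-1))\leq \binom{L}{2} i(i-1)/(c^2K^2)$, whence $\lvert\partial_N p_{i,j}^+\rvert = \mathcal{O}(j/K^2)$ and $\lvert\partial_N p_{i,j}^-\rvert = \mathcal{O}(i(i-1)/K^3)$. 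Since $N_K^\uparrow - N_K^\downarrow \leq 2\epsilon_K K + 2 = \mathcal{O}(\epsilon_K K)$ (we may assume $\epsilon_K K\to\infty$), integrating produces contributions of order $(j/K)\epsilon_K$ and $(i(i-1)/K^2)\epsilon_K$ respectively. Bounding each individual contribution by the full sum $j/K + i(i-1)/K^2$ and taking $C_L$ to be the maximum of the resulting constants yields the claim uniformly in $n$.

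The hard part is the monotonicity: the prefactors $j/N$ and $i(i-1)/(N(N-1))$ decrease in $N$ while the product factors increase, so the decrease of $p_{i,j}^\square$ is not automatic and hinges on making the positive contribution of the products negligible relative to the derivative of the prefactor. This is exactly what pins down the smallness of $\kappa_L$; once the uniform derivative bound is in place, the difference estimate is a routine consequence.
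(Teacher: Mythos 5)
Your proposal is correct and takes essentially the same route as the paper: both differentiate the product representation in $N$, show that the negative derivative of the prefactor ($-1/N$, resp.\ $-1/N-1/(N-1)$) dominates the $\mathcal{O}(\kappa_L/K)$ contribution coming from the product factors once $\kappa_L$ is small, and then obtain the variation bound by integrating a uniform derivative bound of order $j/K^2$ (resp.\ $i(i-1)/K^3$) over an interval of length $\mathcal{O}(\epsilon_K K)$; your logarithmic-derivative bookkeeping is only a cosmetic reorganisation of the paper's term-by-term sign analysis. Two harmless slips: the inequality should read $1/N \geq 1/N_K^\uparrow$ (you need the sum to lie below $1/N_K^\uparrow$, which is the minimum of $1/N$ on the interval), and your assumption that $\epsilon_K K$ stays bounded away from $0$ is legitimate (one may always enlarge $\epsilon_K$) and is in fact implicitly used in the paper's own estimate as well.
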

\begin{proof}
Since the computations are similar, we will concentrate on $p_{i,j}^+(n,\cdot)$.
    For $n = 1$, the assertion is trivial, so let $n\geq 2$.
    First note that $p_{i,j}^+(n,\cdot)$ is differentiable as a function $(0,+\infty) \to \mathbb{R}$ with derivative 
    \begin{align*}
    &\partial_N p^+_{i,j}(n,N) \\
    &= nj\left( \dfrac{-1}{N^2}\prod_{k=1}^{n-1}\left(1 - \dfrac{(i+j-1) + j}{N-k}\right) + \dfrac{1}{N}\sum_{k=1}^{n-1} \dfrac{(i+j - 1) + j}{(N-k)^2}\prod_{\substack{\ell=1\\\ell\neq k}}^{n-1}\left(1 - \dfrac{(i+j-1)+j}{N-\ell}\right)\right)\\
    &= \sum_{k=1}^{n-1} \underbrace{\dfrac{nj}{N}\left(\prod_{\substack{\ell = 1\\\ell\neq k}}^{n-1} \left( 1 - \dfrac{(i+j-1)+j}{N-\ell}\right)\right)}_{\geq 0}\left( \dfrac{(i+j-1)+j}{(N-k)^2} - \dfrac{1}{N(n-1)}\left(1 - \dfrac{(i+j-1+j)}{N-k}\right)\right).
\end{align*}
Next, if $i,j\leq \kappa K$ and $2\leq n\leq L$, we have that the second factor, for $N\in \left[N_K^\downarrow, N_K^\uparrow\right]$, is bounded from above by
\begin{align*}
    \dfrac{3\kappa K}{(N_K^\downarrow-L)^2} - \dfrac{1}{N_K^\uparrow L}\left(1 - \dfrac{3\kappa K}{N_K^\downarrow - L}\right) &\leq \dfrac{6\kappa}{K} - \dfrac{1}{2LK}\cdot (1 - 6\kappa)\\
    &\leq \dfrac{18L\kappa - 1}{2K} < 0
\end{align*}
whenever $\kappa < \frac{1}{18L}$ and $K > L/4$ large enough such that $\epsilon_K < \frac{1}{4}$.
This proves that $p_{i,j}^+(n,\cdot)$ is decreasing on $[N_K^\downarrow, N_K^\uparrow]$.

Again, the variation estimate follows immediately for $n = 1$ as
\[
\left\vert p_{i,j}^+(1, N_K^\uparrow) - p_{i,j}^+(1, N_K^\downarrow)\right\vert = j\left(\dfrac{1}{N_K^\downarrow} - \dfrac{1}{N_K^\uparrow}\right) \leq 4\cdot \dfrac{j}{K}\cdot \epsilon_K
\]
for $K$ large enough.
Next, for $2\leq n\leq L$ and $i,j\leq \kappa K$, we have that for $K$ large enough,
\begin{align}
    \vert \partial_N p_{i,j}^+(n, N)\vert &\leq \sum_{k=1}^{n-1} \dfrac{2nj}{K} \cdot\left(\dfrac{6\kappa K}{K^2} + \dfrac{4}{K(n-1)}\right) \leq C_L \cdot \dfrac{j}{K^2}.
\end{align}
This yields
\[
\left\vert p_{i,j}^+(n, N_K^\uparrow) - p_{i,j}^+(n, N_K^\downarrow)\right\vert \leq 2K\epsilon_K\cdot C_L\cdot \frac{j}{K^2} \leq C_L'\cdot \frac{j}{K}\cdot \epsilon_K.
\]
\end{proof}

\begin{lemma}\label{lem:7_convergence_of_probabilities}
    Fix $L\in\mathbb{N}$.
    Then,
    \[
    \lim_{K\to \infty}\sup_{\substack{1\leq n\leq L\\N\in[N_K^\downarrow,N_K^\uparrow]}} \left\vert n\mathfrak{s} - K^2\sum_{i,j\in\mathbb N_0} p_{i,j}^+(n, N)\nu_K(i,j)\right\vert = 0
    \]
    and
    \[
    \lim_{K\to \infty} \sup_{\substack{1\leq n\leq L\\N\in[N_K^\downarrow,N_K^\uparrow]}} \left\vert \binom{n}{2}(\mathfrak{v} + \mathfrak{m}) - K^2\sum_{i,j\in\mathbb N_0} p_{i,j}^-(n, N)\nu_K(i,j)\right\vert = 0.
    \]
    Furthermore, let 
    \[
    \overline{p}_{i,j}(n, N) := p_{i,j}^\ast(n,N) + \widehat{p_{i,j}}^+(n, N) +\widehat{p_{i,j}}^-(n, N) + \sum_{k=3}^n p_{i,j}^k(n, N),
    \]
    then,
    \[
    \lim_{K\to \infty} \sup_{\substack{1\leq n\leq L\\N\in [N_K^\downarrow, N_K^\uparrow]}} K^2\sum_{i,j\in\mathbb N_0} \overline{p}_{i,j}(n,N)\nu_K(i,j)  = 0.
    \]
\end{lemma}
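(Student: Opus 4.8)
The plan is to prove the three statements by reducing the supremum over $N\in[N_K^\downarrow, N_K^\uparrow]$ to a single reference value, extracting the leading order in $K$ from each probability, and controlling all remaining contributions through the moment estimates \eqref{eq:average_selective_advantage}, \eqref{eq:variance_of_non_selective_stuff}, \eqref{eq:higher-moments-bound} and \Cref{lem:properties_mixed_moments}. Uniformity in $n$ is automatic, since $n$ ranges over the finite set $\{1,\dots,L\}$. For the uniformity in $N$, I would invoke the variation estimate of \Cref{lem:7_probabilities_ordered}: multiplying $\vert p_{i,j}^\square(n, N_K^\uparrow) - p_{i,j}^\square(n, N_K^\downarrow)\vert \leq C_L(\frac{j}{K} + \frac{i(i-1)}{K^2})\epsilon_K$ by $K^2$ and summing against $\nu_K$ yields, via \eqref{eq:average_selective_advantage} and \eqref{eq:variance_of_non_selective_stuff}, a bound of order $\epsilon_K\big(K\sum_{i,j} j\,\nu_K(i,j) + \sum_{i,j} i(i-1)\nu_K(i,j)\big) = \mathcal{O}(\epsilon_K)\to 0$. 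It therefore suffices to establish each limit for a fixed $N = N_K$ with $N_K/K\to 1$, the error made in replacing the true $N$ being uniformly negligible (and, on the tail where \Cref{lem:7_probabilities_ordered} does not apply, absorbed by the estimate below).

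For the first two claims I would split the sum at a threshold $i + j \leq \delta K$ for a small fixed $\delta < \frac13$. On the tail $\{i + j > \delta K\}$, each probability is bounded by $1$ and, applying \eqref{eq:higher-moments-bound} to the $(i+j)$-marginal of $\nu_K$ (which is $\mu_K^+$), one has $K^2 \sum_{i+j>\delta K}\nu_K(i,j) \leq \delta^{-\ell} K^{2-\ell}\sum_{i,j}(i+j)^\ell \nu_K(i,j) = \delta^{-\ell}\,\mathrm{o}(1) \to 0$ for any $\ell\geq 3$. On the bulk, all factors of the product representations lie in $(0,1)$, so that $\big\vert \prod_{k=1}^{n-1}(1 - \tfrac{i+2j-1}{N-k}) - 1\big\vert \leq \frac{n(i+2j)}{N-n}$, and analogously for $p^-$. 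For $p^+$, the leading term $n\frac{j}{N}$ gives $\frac{nK^2}{N}\sum_{i,j} j\,\nu_K(i,j) \to n\mathfrak{s}$ by \eqref{eq:average_selective_advantage} together with $K/N\to 1$, while the product correction is dominated by $\frac{n^2K^2}{N^2}\sum_{i,j} j(i+2j)\nu_K(i,j) \to 0$ using $\sum_{i,j} ij\,\nu_K, \sum_{i,j} j^2\nu_K \to 0$ from \Cref{lem:properties_mixed_moments}. The claim for $p^-$ is identical in spirit: the leading term $\binom{n}{2}\frac{K^2}{N(N-1)}\sum_{i,j} i(i-1)\nu_K(i,j) \to \binom{n}{2}(\mathfrak{v}+\mathfrak{m})$ by \eqref{eq:variance_of_non_selective_stuff} and $\mathfrak{v}_K^-\to\mathfrak{v}$, $\mathfrak{m}_K^-\to\mathfrak{m}$, and the correction is bounded by a constant times $\frac1K\sum_{i,j} i(i-1)(i+j)\nu_K(i,j)$.

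For the third claim I would use the displayed upper bounds together with the elementary asymptotic $\frac{\binom{a}{\ell}\binom{N-\cdots}{n-\ell}}{\binom{N}{n}} \leq \binom{n}{\ell}\frac{a^\ell}{(N-n)^\ell}$ up to $L$-dependent constants. After multiplying by $K^2$ and using $N\sim K$, each term collapses to a vanishing moment: $K^2 p^\ast$ and $K^2\widehat{p}^+$ reduce to $\sum_{i,j} j^2\nu_K \to 0$ (\Cref{lem:properties_mixed_moments}); the terms $K^2 p^k$ for $3\leq k\leq n$ reduce to $K^{2-k}\sum_{i,j} i^k\nu_K = K^{2-k}\,\mathrm{o}(K^{k-2})\to 0$ by \eqref{eq:higher-moments-bound}; and $K^2\widehat{p}^-$ reduces to $K^{-1}\sum_{i,j} i^2 j\,\nu_K(i,j)$. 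For this last mixed moment the first-order vanishing $\sum ij\,\nu_K\to 0$ is not enough, so I would invoke Young's inequality $i^2 j \leq \tfrac23 i^3 + \tfrac13 j^3$ to bound it by $\sum_{i,j} i^3\nu_K + \sum_{i,j} j^3\nu_K = \mathrm{o}(K)$, the latter exactly as in the proof of \Cref{lem:properties_mixed_moments}, whence $K^{-1}\sum i^2 j\,\nu_K \to 0$. Summing the finitely many terms (recall $k\leq n\leq L$) and combining with the bulk/tail split gives the stated uniform convergence.

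The main obstacle is not any individual limit but the \emph{simultaneous} uniform control of all error terms over $n\leq L$ and $N\in[N_K^\downarrow, N_K^\uparrow]$, which hinges on having precisely the right moment inputs available. The most delicate point is the mixed moment $\sum_{i,j} i^2 j\,\nu_K(i,j)$, which arises both in $\widehat{p}^-$ and in the product correction for $p^-$ and for which \Cref{lem:properties_mixed_moments} alone is insufficient; one genuinely needs the higher-moment bound \eqref{eq:higher-moments-bound} together with the interpolation above. A secondary care point is that the bulk threshold $\delta K$ and the exponent $\ell$ used to kill the tail must be chosen compatibly with the regime $i,j\leq \kappa_L K$ in which \Cref{lem:7_probabilities_ordered} applies, so that the $N$-uniformity reduction and the leading-order expansion hold on the same range.
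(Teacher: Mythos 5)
Your proposal is correct and follows essentially the same route as the paper's proof: reduce the $N$-dependence to a single reference value via the monotonicity and variation estimate of \Cref{lem:7_probabilities_ordered} (with the range where that lemma fails killed by a tail moment bound), extract the leading terms $n\tfrac{j}{N}$ and $\binom{n}{2}\tfrac{i(i-1)}{N(N-1)}$, and absorb all corrections using \Cref{lem:properties_mixed_moments}, \eqref{eq:average_selective_advantage}, \eqref{eq:variance_of_non_selective_stuff} and \eqref{eq:higher-moments-bound}. The differences are only technical and all sound: you bound the products by linearization rather than expanding them binomially, you cut the tail with the $\ell\geq 3$ bound \eqref{eq:higher-moments-bound} where the paper uses the $(2+\eta)$-moment assumption, and you spell out the third claim --- in particular the mixed moment $K^{-1}\sum_{i,j} i^2 j\,\nu_K(i,j)$ via Young's inequality (one could equally use $i^2 j\leq (i+j)^3$) --- where the paper merely states that the moment estimates suffice.
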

\begin{proof}
First, note that
\[
K^2\nu_K(i+j\geq \kappa_L) \leq \dfrac{K^2}{(\kappa_LK)^{2+\eta}}\sum_{i,j\geq \kappa_L K} (i+j)^{2+\eta}\nu_K(i,j) = \mathcal{O}(K^{-\eta}),
\]
so that we may apply the previous lemma, which in combination with \eqref{eq:average_selective_advantage} and \eqref{eq:variance_of_non_selective_stuff} allows us to reduce the problem to $N = K$.
Then, we check that
\begin{align*}
    &K^2\sum_{i,j\in\mathbb{N}_0} p_{i,j}^+(n, K)\nu_K(i,j)\\
    &= K^2\sum_{i,j\in\mathbb N_0} \left(n\cdot \frac{j}{K}\cdot\prod_{k=1}^{n-1}\left(1 - \dfrac{(i+j-1)+j}{K-k}\right)\right)\nu_K(i,j)\\
    &= K^2(1 + \text{o}(1))\sum_{i,j\in\mathbb{N}_0} n\frac{j}{K}\left(1 - \dfrac{(i+j-1)+j}{K}\right)^{n-1}\nu_K(i,j)\\
    &= K^2(1+\text{o}(1))\sum_{k=0}^{n-1}\binom{n-1}{k}(-1)^k \sum_{i,j\in\mathbb N_0} n\frac{j}{K}\dfrac{((i+j-1)+j)^k}{K^k}\nu_K(i,j)\\
    &= (1 + \text{o}(1))\left( nK\sum_{i,j\in\mathbb N_0}j\nu_K(i,j) - \frac{n(n-1)}{2}\sum_{i,j\in\mathbb N_0} (ij + 2j^2 - j)\nu_K(i,j) \right) \\
    &\qquad+ \mathcal{O}\left(\sum_{k=2}^{n-1} n\binom{n-1}{k} \dfrac{(i+j)^{k+1}}{K^{k-1}}\nu_K(i,j)\right)
\end{align*}
The last term vanishes because of the moment bounds \eqref{eq:higher-moments-bound}, where the bounding constant might depend on $L$.
For the first part, recall that 
\[
\sum_{i,j\in\mathbb N_0} j\nu_K(i,j) = \frac{\mathfrak{s}}{K} + \text{o}\left(\frac{1}{K}\right)
\]
and the second sum vanishes in $K$ by \Cref{lem:properties_mixed_moments}, so that
\[
\lim_{K\to \infty} K^2 \sum_{i,j\in\mathbb N_0} p_{i,j}^+(n, K)\nu_K(i,j) = n\mathfrak{s}.
\]

Similarly, we compute for $n\geq 2$ that
\begin{align*}
    &K^2\sum_{i,j\in\mathbb{N}_0}p_{i,j}^-(n, K)\nu_K(i,j)\\
    &= K^2(1 + \text{o}(1))\sum_{i,j\in\mathbb N_0}\binom{n}{2} \dfrac{i(i-1)}{K^2}\left(1 - \frac{(i-1) + (j-1)}{K}\right)^{n-2}\nu_K(i,j)\\
    &= (1 + \text{o}(1))\left(\binom{n}{2}\sum_{i,j\in\mathbb N_0} i(i-1)\nu_K(i,j) + \mathcal{O}\left(\sum_{k=1}^{n-2}\binom{n-2}{k} \dfrac{(i+j)^{k+2}}{K^k}\nu_K(i,j)\right)\right)\\
    &\underset{K\to \infty}{\longrightarrow} \binom{n}{2}(\mathfrak{v}+ \mathfrak{m}).
\end{align*}

For the last estimate, we make use of the moment estimates to conclude that it is asymptotically vanishing.
\end{proof}

\subsection{Convergence}\label{sec:convergence-genealogy}

In order to circumvent the dependence on the population size, we will construct an auxiliary quasi-autonomous Markov process which is asymptotically close to $A_K$ in probability.
In the following, we fix a realisation of $(\xi_K, \hat{\xi}_K, N_K)$.
Recall that $A_K$ can be viewed as a time-inhomogeneous discrete-time Markov chain jumping at points in $\hat{\xi}_K$ with transitions given by \eqref{eq:transition_prob_A_K}.
In the following, we will construct for every $L\in\mathbb{N}$ an auxiliary process $B_K^L$ which is more accessible, but asymptotically close to $A_K$.
Recall the constant $\kappa_L$ from \Cref{lem:7_probabilities_ordered}.
If $N_K\not\in\mathcal{E}_K$ or if $\xi_K$ contains points $(t,i,j)$ with $i+j> \kappa_L K$, we choose $B_K^L$ to jump at points $(T-t, i,j)\in\xi_K$ with transitions
\begin{equation}\label{eq:transitions_of_B_K}
n\mapsto \begin{cases}
    n + 1 & \text{ with probability }p_{i,j}^+(n, N_K^\uparrow)\\
    n - 1 & \text{ with probability  }p_{i,j}^-(n, N_K^\uparrow)
\end{cases}
\end{equation}
independently from the rest.
Assume now that $N_K\in\mathcal{E}_K$ and that every point $(t,i,j)\in\xi_K$ satisfies $i+j \leq \kappa_L K$.
Let $V(t,i,j)$ be an independently drawn uniform variable on $[0,1]$ for every $(T-t,i,j)\in \xi_K$.
Furthermore, define
\[
q_{i,j,K}^\pm(n, N) := \dfrac{p_{i,j}^\pm(n, N_K^\uparrow)}{p_{i,j}^\pm(n, N) + \widehat{p_{i,j}}^{\,\pm}(n, N)},
\]
with $\frac{0}{0} := 0$ and $\frac{c}{0} := +\infty$ whenever $c > 0$.
Next, we define the process $B_K^L$ inductively by setting $B_K^L(0) := A_K(0)$ and through the following jumping rules: the process jumps at most at events $(T-t, i,j)\in\xi_K$.
At every such event, its transitions are as follows:
\begin{enumerate}
    \item Suppose first that $(T-t,i,j)\in\hat\xi_K$ and $A_K(t-) = B_K^L(t-)$.
    If $A_K$ does not jump, then $B_K^L$ also does not jump. 
    If, on the other hand, $A_K$ does jump, then:
    \begin{enumerate}
        \item if $A_K$ jumps up by one, $B_K^L$ jumps also up by one if additionally
        \[
            V(t,i,j) \leq q_{i,j,K}^+(B_K^L(t-), N_K(T-t)).
        \]

        \item if $A_K$ jumps down by one, $B_K^L$ jumps also down by one if additionally
        \[
            V(t,i,j) \leq q_{i,j,K}^-(B_K^L(t-), N_K(T-t))
        \]

        \item otherwise, $B_K^L$ does not jumps.
    \end{enumerate}

    \item If $(T-t,i,j)\in \xi_K\setminus \hat{\xi}_K$ or if $A_K(t-)\neq B_K^L(t-)$, then $B_K^L$ jumps up by one if $V(t,i,j)\leq p_{i,j}^+(B_K^L(t-), N_K^\uparrow)$ and jumps down by one $0 < V(t,i,j) - p_{i,j}^+(B_K^L(t-), N_K^\uparrow)\leq p_{i,j}^-(B_K^L(t-), N_K^\uparrow)$.
\end{enumerate}

The next lemma ensures that $B_K^L$ behaves like a nice autonomous Markov process up to the stopping time $\sigma_K^L := \inf\{t \geq 0\;:\; B_K^L(t)\geq L\}$.

\begin{lemma}\label{lemma:stopped-martingale-problem}
    For every $L\in\mathbb{N}$ and $K$ large enough, the pair $(B_K^L, \sigma_K^L)$ is a solution to the stopped martingale problem for
    \[
    \mathcal{B}_K f(n) = KN_K^\uparrow\sum_{i,j\in\mathbb N_0} \Big( p_{i,j}^+(n, N_K^\uparrow)\big( f(n+1) - f(n)\big) + p_{i,j}^-(n, N_K^\uparrow)\big( f(n-1) - f(n)\big)\Big)\nu_K(i,j).
    \]
\end{lemma}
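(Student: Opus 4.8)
The plan is to fix a realisation of the environment $(\xi_K, \hat\xi_K, N_K, A_K)$ and to compute, for each level $n < L$, the conditional rate at which $B_K^L$ jumps up or down at a point $(T-t,i,j)\in\xi_K$. The key claim I would establish is that, no matter which branch of the construction applies, the marginal up-rate equals $KN_K^\uparrow\sum_{i,j} p_{i,j}^+(n,N_K^\uparrow)\nu_K(i,j)$ and the marginal down-rate equals $KN_K^\uparrow\sum_{i,j} p_{i,j}^-(n,N_K^\uparrow)\nu_K(i,j)$; the dependence on $A_K$ and on the current population size $N_K(T-t)$ must disappear. Once this is done, the stopped martingale problem for $\mathcal{B}_K$ follows from the standard characterisation of a pure-jump process driven by a Poisson point process via its compensator.

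The heart of the argument is a pointwise cancellation. Suppose first that $A_K(t-)=B_K^L(t-)=n$. A point of $\xi_K$ lies in $\hat\xi_K$ with probability $N_K(T-t)/N_K^\uparrow$; on this event $A_K$ jumps up by one with probability $p_{i,j}^+(n,N_K(T-t))+\widehat{p_{i,j}}^+(n,N_K(T-t))$, and then $B_K^L$ follows with conditional probability $q_{i,j,K}^+(n,N_K(T-t))$, the auxiliary uniform $V$ being independent of $A_K$. Multiplying these two factors, the $\big(p_{i,j}^+ + \widehat{p_{i,j}}^+\big)$ terms cancel and leave exactly $p_{i,j}^+(n,N_K^\uparrow)$. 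On the complementary event the point is not in $\hat\xi_K$, the fall-back rule applies, and $B_K^L$ jumps up with probability $p_{i,j}^+(n,N_K^\uparrow)$ directly. Averaging over $\hat\xi_K$-membership, the weights $N_K(T-t)/N_K^\uparrow$ and $1-N_K(T-t)/N_K^\uparrow$ both multiply the \emph{same} quantity $p_{i,j}^+(n,N_K^\uparrow)$, so the conditional up-probability is $p_{i,j}^+(n,N_K^\uparrow)$. If instead $A_K(t-)\neq B_K^L(t-)$, the fall-back rule applies at every point of $\xi_K$ and yields $p_{i,j}^+(n,N_K^\uparrow)$ again; the same holds verbatim when $N_K\notin\mathcal{E}_K$ or $i+j>\kappa_L K$, where the transitions \eqref{eq:transitions_of_B_K} are used. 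The identical computation with $p^-$ and $\widehat{p_{i,j}}^-$ handles down-jumps. Since $\{A_K(t-)=B_K^L(t-)\}$ and the fall-back conditions are measurable with respect to the past, and since points of $\xi_K$ carrying a given $(i,j)$ arrive at (backward) rate $KN_K^\uparrow\nu_K(i,j)$ by stationarity of $\xi_K$, thinning by these conditional probabilities reproduces exactly the rates in $\mathcal{B}_K$.

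The single point that requires care — and the main obstacle — is that the telescoping above only makes sense if $q_{i,j,K}^\pm$ is a genuine probability, i.e.\ $q_{i,j,K}^\pm(n,N_K(T-t))\le 1$. This is precisely where the hypotheses enter: on $\mathcal{E}_K$ we have $N_K(T-t)\in[N_K^\downarrow,N_K^\uparrow]$, and \Cref{lem:7_probabilities_ordered} guarantees that for $K\ge K_L$, $n\le L$ and $i+j\le\kappa_L K$ the map $N\mapsto p_{i,j}^\pm(n,N)$ is decreasing on this interval. Hence $p_{i,j}^\pm(n,N_K^\uparrow)\le p_{i,j}^\pm(n,N_K(T-t))\le p_{i,j}^\pm(n,N_K(T-t))+\widehat{p_{i,j}}^\pm(n,N_K(T-t))$, so $q_{i,j,K}^\pm\in[0,1]$. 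This explains both why the statement is restricted to $K$ large and why we stop at $\sigma_K^L=\inf\{t\ge 0:B_K^L(t)\ge L\}$: the monotonicity, and with it the validity of the coupling, is only guaranteed while the process stays strictly below $L$.

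To conclude, I would assemble the pointwise rate computation into the compensator of the jump measure of $B_K^L$ stopped at $\sigma_K^L$. Because the conditional up- and down-rates depend on the past only through $B_K^L(t-)$ — this is the crucial consequence of the cancellation, which erases the dependence on both $A_K$ and $N_K(T-t)$ — the compensator of $f\big(B_K^L(\cdot\wedge\sigma_K^L)\big)$ equals $\int_0^{\cdot\wedge\sigma_K^L}\mathcal{B}_Kf(B_K^L(s))\,\mathrm{d}s$, so that $f\big(B_K^L(t\wedge\sigma_K^L)\big)-f\big(B_K^L(0)\big)-\int_0^{t\wedge\sigma_K^L}\mathcal{B}_Kf(B_K^L(s))\,\mathrm{d}s$ is a martingale for every bounded $f$. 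This is exactly the assertion that $(B_K^L,\sigma_K^L)$ solves the stopped martingale problem for $\mathcal{B}_K$.
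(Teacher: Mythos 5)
Your proposal is correct and follows essentially the same route as the paper's proof: both rest on the stationarity of the time-reversed Poisson point process, the cancellation $\big(p_{i,j}^{\pm}+\widehat{p_{i,j}}^{\,\pm}\big)(n,N_K(T-t))\cdot q_{i,j,K}^{\pm}(n,N_K(T-t)) = p_{i,j}^{\pm}(n,N_K^\uparrow)$ across all branches of the construction, and the monotonicity from \Cref{lem:7_probabilities_ordered} to guarantee $q_{i,j,K}^{\pm}\in[0,1]$ for $n\leq L$, $K\geq K_L$ and $i+j\leq \kappa_L K$. Your write-up simply makes explicit the details (averaging over $\hat\xi_K$-membership and the compensator identification) that the paper's terser proof leaves implicit.
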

\begin{proof}
Since $\xi_K$ is a time-homogeneous Poisson point process, its time reversal is too and has the same mean intensity measure.
As such, it is enough to show that, conditionally on $\xi_K$, $B_K^L$ is a Markov chain with transitions \eqref{eq:transitions_of_B_K} at events $(T-t,i,j)\in\xi_K$, at least until $\sigma_K^L$.

First, note that $(A_K, B_K^L)$ conditionally on $(\xi_K, \hat{\xi}_K, N_K)$ is a time-inhomogeneous Markov chain.
Assume that $N_K\in\mathcal{E}_K$ and that every point $(t,i,j)\in\xi_K$ satisfies $i+j\leq \kappa_L K$, as $B_K^L$ has the correct behaviour otherwise.
Consider a point $(T-t,i,j)\in\xi_K$ and suppose that $A_K(t-) = B_K^L(t-)$ for the same reason.
By \Cref{lem:7_probabilities_ordered}, we may conclude that $V_K^\pm(n, N)$ are probabilities for any $1\leq n\leq L$ and $N\in [N_K^\downarrow, N_K^\uparrow]$, so that we may multiply the probabilities and conclude.
\end{proof}

We are now ready to prove \Cref{thm:block-counting-convergence}. To conclude that $A_K$ weakly converges to the process $A$ defined in $\eqref{eq:lineage-counting-lim}$, there are two things left to show: first, that the operator $\mathcal{B}_K$ converges to the generator of $A$, and second, that the stopping time $\tau_K := \inf\{t\geq 0\;:\; A_K(t) \neq B_K(t)\}$ goes to $+\infty$ with $K$. 
We prove these two facts in the following lemmas. 
\Cref{thm:block-counting-convergence} then follows from the fact that the limiting process is non-exploding, so that it suffices to prove the convergence of the processes stopped upon reaching some fixed threshold $L\in\mathbb N$, see e.g.~\cite[Lemma 5.3]{BES04}.

\begin{lemma}
Let $\tau_K := \inf\{t\geq 0\;:\; A_K(t) \neq B_K(t)\}$ $\mathcal{B}_K$ be as in \Cref{lemma:stopped-martingale-problem}.
    For any $f:\mathbb{N}\to\mathbb{R}$ with compact support, we have that $\mathcal{B}_K f\to \mathcal{B}f$ uniformly, where
    \[
    \mathcal{B}f(n) = n\mathfrak{s}\big(f(n+1) - f(n)\big) + \binom{n}{2}(\mathfrak{v} + \mathfrak{m})\big( f(n-1) - f(n)\big),
    \]
    is the generator of \eqref{eq:lineage-counting-lim}.
\end{lemma}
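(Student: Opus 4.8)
The plan is to deduce the claim almost directly from \Cref{lem:7_convergence_of_probabilities}, the only real work being to reduce the a priori infinite supremum to a finite index set and to absorb the prefactor $KN_K^\uparrow$.

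First I would exploit the compact support of $f$. Since $f:\N\to\R$ vanishes outside a finite set, there is some $L\in\N$ such that $f(n+1)-f(n) = f(n-1)-f(n) = 0$ for all $n > L$. Consequently both $\mathcal{B}_K f(n)$ and $\mathcal{B}f(n)$ vanish identically for $n > L$, so the uniform convergence over all of $\N$ reduces to convergence over the finite set $\{1,\dots,L\}$; the case $n=0$ is trivial since $p_{i,j}^+(0,N)=0$ (the factor $n$ in its formula) and $p_{i,j}^-(0,N)=0$ (the factor $\binom{n}{2}$), whence $\mathcal{B}_K f(0)=0=\mathcal{B}f(0)$. On a finite set pointwise convergence is automatically uniform, so it suffices to fix $n\in\{1,\dots,L\}$ and pass to the limit.

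Next I would rewrite the prefactor. As $N_K^\uparrow = \lceil(1+\epsilon_K)K\rceil = K(1+\mathrm{o}(1))$ when $\epsilon_K\to 0$, we have $KN_K^\uparrow = K^2(1+\mathrm{o}(1))$, with the error uniform in $n$ and $(i,j)$. Writing
\[
\begin{aligned}
\mathcal{B}_K f(n) &= \big(KN_K^\uparrow\big)\Big(\sum_{i,j} p_{i,j}^+(n,N_K^\uparrow)\nu_K(i,j)\Big)\big(f(n+1)-f(n)\big)\\
&\quad + \big(KN_K^\uparrow\big)\Big(\sum_{i,j} p_{i,j}^-(n,N_K^\uparrow)\nu_K(i,j)\Big)\big(f(n-1)-f(n)\big),
\end{aligned}
\]
I would substitute the two limits from \Cref{lem:7_convergence_of_probabilities}, evaluated at the admissible choice $N = N_K^\uparrow\in[N_K^\downarrow,N_K^\uparrow]$, namely $K^2\sum_{i,j}p_{i,j}^+(n,N_K^\uparrow)\nu_K(i,j)\to n\mathfrak{s}$ and $K^2\sum_{i,j}p_{i,j}^-(n,N_K^\uparrow)\nu_K(i,j)\to\binom{n}{2}(\mathfrak{v}+\mathfrak{m})$, both uniformly over $\{1,\dots,L\}$. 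Since these limits are bounded on $\{1,\dots,L\}$, multiplying by the factor $(1+\mathrm{o}(1))$ coming from $KN_K^\uparrow/K^2$ preserves the convergence and yields $\mathcal{B}_K f(n)\to n\mathfrak{s}\big(f(n+1)-f(n)\big)+\binom{n}{2}(\mathfrak{v}+\mathfrak{m})\big(f(n-1)-f(n)\big) = \mathcal{B}f(n)$.

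There is essentially no serious obstacle at this level: the genuine analytic effort — controlling the hypergeometric probabilities $p_{i,j}^\pm$, discarding the large-jump contributions via the moment bounds \eqref{eq:higher-moments-bound}, and identifying the limiting rates $n\mathfrak{s}$ and $\binom{n}{2}(\mathfrak{v}+\mathfrak{m})$ — has already been carried out in \Cref{lem:7_convergence_of_probabilities}. The only points requiring a little care are the uniformity of the $\mathrm{o}(1)$ error in the prefactor (which holds because it depends only on $\epsilon_K$ and $K$, not on $n$ or $(i,j)$) and the reduction to a finite index set, both of which are immediate once the compact support of $f$ is invoked.
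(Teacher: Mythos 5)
Your proof is correct and follows exactly the route the paper takes: the paper's own proof is the one-line statement that the lemma is an immediate consequence of \Cref{lem:7_convergence_of_probabilities}, and your write-up simply makes explicit the routine steps (reduction to finitely many $n$ via compact support, $N_K^\uparrow\in[N_K^\downarrow,N_K^\uparrow]$, and $KN_K^\uparrow = K^2(1+\mathrm{o}(1))$) that the authors leave implicit.
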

\begin{proof}
This is an immediate consequence of \Cref{lem:7_convergence_of_probabilities}.
\end{proof}

\begin{lemma}
Consider the two stopping times $\tau_K := \inf\{t\geq 0\;:\; A_K(t) \neq B_K(t)\}$ and $\sigma_K^L := \inf\{t \geq 0\;:\; B_K^L(t)\geq L\}$.
    We have that $\tau_K\wedge \sigma_K^L \to \sigma_K^L$ in probability as $K\to \infty$.
\end{lemma}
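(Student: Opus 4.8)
The plan is to prove that $A_K$ and $B_K^L$ do not disagree before $\sigma_K^L$, that is, $\P\big(\tau_K\leq\sigma_K^L\wedge T\big)\to 0$; on the complementary event one has $\tau_K\wedge\sigma_K^L = \sigma_K^L$, so the two stopping times coincide with probability tending to one and the claimed convergence in probability follows. Since $\tau_K$ is the first disagreement time between $A_K$ and $B_K^L$, before $\tau_K\wedge\sigma_K^L$ the two processes are matched at a common level $n < L$, and it suffices to bound the rate at which a disagreement is produced out of such a matched state below $L$.

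First I would work on the event $\mathcal{G}_K := \mathcal{E}_K\cap\{\xi_K\text{ has no point }(t,i,j)\text{ with }i+j > \kappa_L K\}$, which satisfies $\P(\mathcal{G}_K)\to 1$: indeed $\P(\mathcal{E}_K)\to 1$ by \eqref{eq:varepsilon-condition}, while the expected number of points of $\xi_K$ with $i+j > \kappa_L K$ equals $KN_K^\uparrow T\,\nu_K(i+j>\kappa_L K) = \mathcal{O}(K^{-\eta})$, exactly as in the proof of \Cref{lem:7_convergence_of_probabilities}. On $\mathcal{G}_K$ the coupling is in its good regime, and comparing the transitions \eqref{eq:transition_prob_A_K} of $A_K$ with those of $B_K^L$, a matched pair at level $n$ can disagree at an event $(T-t,i,j)\in\xi_K$ in exactly two ways, where I write $N := N_K(T-t)$. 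If the event is effective (probability $N/N_K^\uparrow$), then $A_K$ makes a move that $B_K^L$ fails to follow---either $A_K$ jumps by $\pm 1$ while the auxiliary variable $V$ exceeds $q^\pm_{i,j,K}$ and keeps $B_K^L$ still, or $A_K$ is sent to the cemetery or performs a merger of three or more lineages---which conditionally on effectiveness has probability $P^{i,j}_{\mathrm{div}}(n,N) := [p^+_{i,j}(n,N) - p^+_{i,j}(n,N_K^\uparrow)] + [p^-_{i,j}(n,N) - p^-_{i,j}(n,N_K^\uparrow)] + \overline{p}_{i,j}(n,N)$. If the event is non-effective (probability $1 - N/N_K^\uparrow$), then $A_K$ does not move while $B_K^L$ still jumps by $\pm 1$ with probability $p^+_{i,j}(n,N_K^\uparrow) + p^-_{i,j}(n,N_K^\uparrow)$.

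By Dynkin's formula, $\P(\tau_K\leq\sigma_K^L\wedge T,\,\mathcal{G}_K)$ is bounded by $T$ times the supremum over $1\leq n\leq L$ and $N\in[N_K^\downarrow,N_K^\uparrow]$ of the disagreement rate
\[
r_K(n,N) = KN\sum_{i,j}\nu_K(i,j)\,P^{i,j}_{\mathrm{div}}(n,N) + K(N_K^\uparrow - N)\sum_{i,j}\nu_K(i,j)\big(p^+_{i,j}(n,N_K^\uparrow) + p^-_{i,j}(n,N_K^\uparrow)\big).
\]
For the first summand I would use $KN \leq KN_K^\uparrow = (1+\text{o}(1))K^2$ together with $K^2\sum_{i,j}\nu_K\overline{p}_{i,j}(n,N)\to 0$ from \Cref{lem:7_convergence_of_probabilities}, and with the monotonicity and variation estimate of \Cref{lem:7_probabilities_ordered}, which give $K^2\sum_{i,j}\nu_K|p^\pm_{i,j}(n,N) - p^\pm_{i,j}(n,N_K^\uparrow)| \leq C_L\epsilon_K\big(K\sum_{i,j}j\,\nu_K + \sum_{i,j}i(i-1)\nu_K\big) = C_L\epsilon_K\,\mathcal{O}(1)$ by \eqref{eq:average_selective_advantage} and \eqref{eq:variance_of_non_selective_stuff}. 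For the second summand I would use $N_K^\uparrow - N \leq N_K^\uparrow - N_K^\downarrow = \mathcal{O}(\epsilon_K K)$ on $\mathcal{E}_K$ and $\sum_{i,j}\nu_K\big(p^+_{i,j}(n,N_K^\uparrow) + p^-_{i,j}(n,N_K^\uparrow)\big) = \mathcal{O}(K^{-2})$, again from \Cref{lem:7_convergence_of_probabilities}, so that this term is $\mathcal{O}(\epsilon_K)$. Both contributions vanish uniformly in $n\leq L$ as $\epsilon_K\to 0$, whence $\sup_{n,N} r_K(n,N)\to 0$; together with $\P(\mathcal{G}_K^c)\to 0$ this gives $\P(\tau_K\leq\sigma_K^L\wedge T)\to 0$.

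The delicate point is the second disagreement mechanism, stemming from non-effective events. One might fear that $B_K^L$ moves spuriously whenever the population sits strictly below $N_K^\uparrow$, and indeed the crude bound $N\leq N_K^\uparrow$ only controls the first mechanism. The resolution is that these spurious moves occur at rate proportional to $N_K^\uparrow - N$, which on $\mathcal{E}_K$ is of order $\epsilon_K K$ rather than $K$; combined with the $\mathcal{O}(K^{-2})$ per-event jump probability and the $\mathcal{O}(K^2)$ event rate, this is precisely where the concentration of the population size around its carrying capacity---encoded in \eqref{eq:varepsilon-condition} and the freedom to choose $\epsilon_K\to 0$---enters the argument.
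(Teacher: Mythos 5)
Your proposal is correct and follows essentially the same route as the paper: restrict to the high-probability event where the population stays in $[N_K^\downarrow, N_K^\uparrow]$ and all events satisfy $i+j\leq \kappa_L K$, then bound the two disagreement mechanisms (spurious $B_K^L$-jumps at non-effective events, and $A_K$-jumps not followed by $B_K^L$ at effective events, the latter with probability $\overline{p}_{i,j}(n,N) + (p_{i,j}^+(n,N)-p_{i,j}^+(n,N_K^\uparrow)) + (p_{i,j}^-(n,N)-p_{i,j}^-(n,N_K^\uparrow))$) via first-moment estimates using \Cref{lem:7_probabilities_ordered} and \Cref{lem:7_convergence_of_probabilities}, yielding $\mathcal{O}(\epsilon_K)+\text{o}(1)$ for each. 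The only cosmetic difference is that you phrase the first-moment bound as a supremum of a disagreement rate via Dynkin's formula, while the paper bounds the expectation of sums over the marked Poisson points directly.
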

\begin{proof}
    First, note that we may have $\tau_K \wedge\sigma_K^L < \sigma_K^L$ if $N_K\not\in\mathcal{E}_K$ or if $\xi_K$ contains points $(t,i,j)$ with $i+j> \kappa_L K$.
    Denoting this event by $\mathcal{F}_K$, we have already seen that $\mathbb{P}(\mathcal{F}_K)\to 0$ as $K\to \infty$.
    Note that if we are on $\mathcal{F}_K^c$ and have $\tau_K < \sigma_K^L$, that means that there has been either an event in $\xi_K\setminus \hat{\xi}_K$ before which $A_K = B_K \leq L$ and at which $B_K$ has jumped or there has been an event in $\hat{\xi}_K$ at which $A_K$ has jumped, but $B_K$ has not.
    Now,
    \begin{align*}
        &\mathbb{P}\Big(\mathcal{F}_K^c\cap \{\exists (T-t,i,j)\in \xi_K\setminus\hat\xi_K\text{ s.t. } B_K(t)\neq B_K(t-) = A_K(t-) \leq L\}\Big) \\
        &\leq \sup_{1\leq n\leq L} \mathbb{E}\left[ \mathds{1}_{\mathcal{F}_K^c}\sum_{i,j\in\mathbb N_0} \mathds{1}_{\exists t\in[0,T], (T-t,i,j)\in\xi_K\setminus \hat{\xi}_K}\Big( p_{i,j}^+(n, N_K^\uparrow) + p_{i,j}^-(n, N_K^\uparrow)\Big)\right]\\
        &\leq \sup_{1\leq n\leq L} TKN_K^\uparrow\epsilon_K\sum_{i,j\in\mathbb{N}_0} \Big(p_{i,j}^+(n, N_K^\uparrow) + p_{i,j}^-(n, N_K^\uparrow)\Big) \nu_K(i,j)\\
        &= \mathcal{O}(\epsilon_K) = \text{o}(1).
    \end{align*}
    Secondly,
    \begin{align*}
        &\mathbb{P}\Big(\mathcal{F}_K^c \cap \{\exists (T-t,i,j)\in\hat\xi_K\text{ s.t. } B_K(t-) = A_K(t-)\leq L\text{ and }B_K(t) \neq A_K(t)\}\Big)\\
        &\leq \sup_{\substack{1\leq n\leq L\\ N\in[N_K^\downarrow, N_K^\uparrow]}} \mathbb{E}\left[\sum_{i,j\in\mathbb N_0} \mathds{1}_{\exists (T-t,i,j)\in \hat{\xi}_K}\left( \overline{p}_{i,j}(n,N) + \left( p_{i,j}^+(n, N) - p_{i,j}^+(n, N_K^\uparrow)\right) + \left( p_{i,j}^-(n,N) - p_{i,j}^-(n, N_K^\uparrow)\right)\right)\right] \\
        &\leq \sup_{\substack{1\leq n\leq L\\N\in [N_K^\downarrow, N_K^\uparrow]}} TKN_K\left( \sum_{i,j\in\mathbb N_0}\overline{p}_{i,j}(n, N)\nu_K(i,j) + C_L\epsilon_K\sum_{i,j\in\mathbb N_0} \left(\dfrac{j}{K} + \dfrac{i(i-1)}{K^2}\right)\nu_K(i,j)\right) \\
        &= \text{o}(1) + \mathcal{O}(\epsilon_K)  = \text{o}(1).
    \end{align*}
\end{proof}

\section*{Acknowledgements}
We thank Alison Etheridge for helpful discussions. 
MDP was supported by IRTG 2544 “Stochastic Analysis in Interaction”.
JK has been partially funded by the Deutsche Forschungsgemeinschaft (DFG, German Research Foundation) under Germany’s Excellence Strategy -- The Berlin Mathematics Research Center MATH+ (EXC-2046/1, project ID: 390685689) and through grant CRC 1114 ``Scaling Cascades in Complex Systems", Project Number 235221301, Project C02 ``Interface dynamics: Bridging stochastic and hydrodynamic descriptions".
% \nocite{*}

\printbibliography
\end{document}